\documentclass[a4paper,11pt]{article}
\usepackage{amsmath,amssymb,amsthm}
\usepackage{cite}
\usepackage{eucal}
\usepackage{microtype}
\usepackage[colorlinks=true,linkcolor=blue,citecolor=blue,urlcolor=blue]{hyperref}
\allowdisplaybreaks

\hypersetup{
  pdftitle={Sharp Poincar\'e Interpolation Along Wasserstein Geodesics},
  pdfauthor={Bang-Xian Han and Zhuo-Nan Zhu},
  pdfsubject={Sharp Poincar\'e inequalities, Gaussian entropy, and optimal transport},
  pdfkeywords={Poincar\'e inequality, coupled Bochner method, strong log-concavity, optimal transport, Wasserstein geodesic, Gaussian entropy, Gaussian Brunn--Minkowski inequality}
}

\numberwithin{equation}{section}
\theoremstyle{plain}
\newtheorem{theorem}{Theorem}[section]
\newtheorem{corollary}[theorem]{Corollary}
\newtheorem{lemma}[theorem]{Lemma}
\newtheorem{proposition}[theorem]{Proposition}
\theoremstyle{definition}
\newtheorem{definition}[theorem]{Definition}
\newtheorem{question}[theorem]{Question}
\theoremstyle{remark}
\newtheorem{remark}[theorem]{Remark}

\newcommand{\R}{\mathbb{R}}
\renewcommand{\d}{\mathrm d}
\newcommand{\Var}{\operatorname{Var}}
\newcommand{\Cov}{\operatorname{Cov}}
\newcommand{\Ent}{\operatorname{Ent}}
\newcommand{\FI}{\mathrm I}
\newcommand{\diver}{\operatorname{div}}
\newcommand{\tr}{\operatorname{tr}}
\newcommand{\HS}{\mathrm{HS}}
\newcommand{\Hess}{\operatorname{Hess}}
\newcommand{\norm}[1]{\left\lVert #1\right\rVert}
\newcommand{\ip}[2]{\left\langle #1,#2\right\rangle}

\title{\fontsize{14}{17}\selectfont\bfseries
Sharp Poincar\'e interpolation along Wasserstein geodesics}
\author{Bang-Xian Han\thanks{School of Mathematics, Shandong University, Jinan 250100, China. Email: hanbx@sdu.edu.cn}
\and Zhuo-Nan Zhu\thanks{School of Mathematical Sciences, University of Science and Technology of China, Hefei 230026, China. Email: zhuonanzhu@mail.ustc.edu.cn}}
\date{}

\begin{document}

\maketitle

\begin{abstract}
Let $\mu_0$ and $\mu_1$ be $\kappa_0$- and $\kappa_1$-strongly
log-concave probability measures on $\R^n$, and let $(\mu_t)_{t\in[0,1]}$
be their quadratic Wasserstein geodesic.  We prove the sharp Poincar\'e constant estimate
\[
\sqrt{C_P(\mu_t)}
\leq
\frac{1-t}{\sqrt{\kappa_0}}
+
\frac{t}{\sqrt{\kappa_1}}.
\]
The coefficient is optimal for every $t,\kappa_0,\kappa_1$, and the
result remains valid for extended-valued potentials without symmetry
assumptions.  Equality at an interior time holds exactly when the two
endpoints have Gaussian factors in the same direction, with variances
$\kappa_0^{-1}$ and $\kappa_1^{-1}$.  All such directions form a maximal
linear subspace.
This gives an affirmative answer, without symmetry or parity
assumptions, to a question of Aishwarya--Rotem.

The proof develops a coupled Bochner method for the two endpoints.  We
solve a weighted Poisson equation involving the Hessian of the Brenier
potential.  From its solution we construct a Bochner couple, and separate estimates for its two
fields are combined along the interpolation.  No curvature bound is
needed for the intermediate measures.  Applications include
centered Gaussian relative entropy, Gaussian Brunn--Minkowski
inequalities with barycenter terms, and centered HWI, logarithmic Sobolev,
and Talagrand estimates.
\end{abstract}

\textbf{Keywords}: Poincar\'e inequality, coupled Bochner method,
log-concavity, optimal transport, Wasserstein geodesic, relative entropy,
Gaussian Brunn--Minkowski inequality

\textbf{MSC 2020}: Primary 49Q22, 60E15; Secondary 35P15, 53C21, 52A40

\setcounter{tocdepth}{1}
\begingroup
\setlength{\parskip}{0pt}
\tableofcontents
\endgroup

\section{Introduction}
\subsection{Background and main results}
We study whether analytic estimates implied by curvature bounds at two
endpoints remain valid along their quadratic Wasserstein geodesic. Convexity
of the intermediate measures cannot be assumed: even for uniform measures on
convex bodies, the support of a Wasserstein midpoint can be nonconvex
\cite{SantambrogioWang}, and the midpoint is then not log-concave.

Optimal transport has long linked interpolation inequalities, entropy,
and curvature. Otto and Villani proved, under curvature hypotheses, the
implication from logarithmic Sobolev to Talagrand inequalities and the HWI
estimate \cite{OttoVillani}.
Cordero-Erausquin, McCann, and Schmuckenschl\"ager established a
Riemannian interpolation inequality through optimal transport
\cite{CorderoMcCannSchmuckenschlager}. Von Renesse and Sturm
characterized lower Ricci curvature by displacement convexity of entropy
and equivalent transport and gradient estimates \cite{vonRenesseSturm}.
On Riemannian manifolds with boundary, the first author subsequently proved
measure-rigidity results for the corresponding curvature-dimension
conditions \cite{HanMeasureRigidity}. These results motivate the study of
spectral inequalities along a prescribed Wasserstein geodesic when
curvature assumptions are imposed only at its endpoints.

We prove that positive curvature bounds at the endpoints yield a
Poincar\'e estimate along their quadratic Wasserstein geodesic. The sharp
estimate is linear in the scale $\sqrt{C_P(\mu)}$, which is
the standard deviation for an isotropic Gaussian. If the endpoint
(Bakry--\'Emery) curvature bounds are $\kappa_0$ and $\kappa_1$, then
\[
\sqrt{C_P(\mu_t)}
\leq
(1-t)\kappa_0^{-1/2}+t\kappa_1^{-1/2}.
\]
Gaussian endpoints attain equality, so the coefficient is optimal. The
argument does not require a lower Hessian bound for the intermediate
potentials.

The proof links the two endpoint Bochner inequalities through one
weighted Poisson equation.  It does not propagate convexity of the
potentials along the geodesic.  The source and target estimates are
proved separately.  This keeps their sharp constants and is also
important for the equality case.  We call this argument the
\emph{coupled Bochner method}.

The basic object is a \emph{Bochner couple}, a pair of vector fields at
the two endpoints.  The fields have the same divergence datum and are
related by $DT=\Hess{\Phi}$, the differential of the Brenier map.  Each field satisfies its own Bochner
estimate.  We first construct this pair from a single scalar Poisson
solution.  We then interpolate the two fields to obtain the estimate at
time $t$.  We refer to this construction and interpolation as
\emph{Bochner coupling}.

For a probability measure $\mu\in\mathcal P(\R^n)$, its Poincar\'e constant
$C_P(\mu)$ is defined as the least $C\in[0,\infty]$ such that
\begin{align*}
 \Var_\mu(f)
 :=\int\left|f-\int f\,\d\mu\right|^2\,\d\mu
 \leq C\int|\nabla f|^2\,\d\mu,\qquad\text{for every }f\in C_c^\infty(\R^n).
\end{align*}
For integrable functions $f$ and $g$, we use
\[
 \Cov_\mu(f,g)
 :=\int\left(f-\int f\,\d\mu\right)
          \left(g-\int g\,\d\mu\right)\,\d\mu.
\]
If $\mu$ has a finite first moment, write its barycenter $b_\mu:=\int x\,\d\mu$. 
If $\mu$ has a finite second moment, its covariance matrix is
\[
 \Cov(\mu):=\int (x-b_\mu)\otimes(x-b_\mu)\,\d\mu.
\]
For a random vector $X$ with law $\mu$, we write
$\Cov(X):=\Cov(\mu)$.

We write $\d\mu=e^{-V}\,\d x$ for a probability measure, absorbing the
normalizing constant into the potential $V$. We assume throughout that
$V:\R^n\to(-\infty,+\infty]$ is proper and lower semicontinuous.
We call $\mu$ log-concave if $V$ is convex.  For $\kappa>0$, we call
$\mu$ $\kappa$-strongly log-concave if $V-\kappa\frac{|x|^2}{2}$ is convex.
In the latter case, the classical Brascamp--Lieb inequality
\cite{BrascampLieb} yields $C_P(\mu)\leq\kappa^{-1}$.

Now let $\mu_0$ and $\mu_1$ be strongly log-concave.  Since $\mu_0$ is
absolutely continuous, Brenier's theorem \cite{Brenier} provides an
optimal transport map $T$ from $\mu_0$ to $\mu_1$. 
The constant-speed $W_2$-geodesic joining $\mu_0$ and $\mu_1$ is given by
\[
F_t=(1-t)I+tT,\qquad
\mu_t=(F_t)_\#\mu_0,\qquad t\in[0,1].
\]
Here $W_2$ denotes the quadratic Wasserstein distance and $I$ is the
identity map. The strong log-concavity assumptions give Brascamp--Lieb bounds at the
two endpoints, but no corresponding lower Hessian bound for the potential
of $\mu_t$ is available a priori.  Thus the Brascamp--Lieb
inequality cannot be applied directly at intermediate times.

The following theorem gives the sharp bound for possibly different
endpoint curvatures, without any symmetry assumption. For $i=0,1$, let $\mu_i$ be
$\kappa_i$-strongly log-concave for some
$\kappa_i>0$.  We allow the corresponding potentials to take the value
$+\infty$; in particular, the result includes Gaussian measures
conditioned on full-dimensional convex sets.

\begin{theorem}[Sharp Poincar\'e interpolation]
\label{thm:main}
For every $t\in[0,1]$, we have
\begin{equation}\label{eq:main}
 \Var_{\mu_t}(f)
 \leq\left(\frac{1-t}{\sqrt{\kappa_0}}
           +\frac{t}{\sqrt{\kappa_1}}\right)^2
       \int|\nabla f|^2\,\d\mu_t,
       \qquad\text{for every }f\in C_c^\infty(\R^n).
\end{equation}
Equivalently,
\[
 \sqrt{C_P(\mu_t)}\leq \frac{1-t}{\sqrt{\kappa_0}}
+\frac{t}{\sqrt{\kappa_1}}.
\]
\end{theorem}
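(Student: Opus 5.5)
The plan is to reduce \eqref{eq:main} at time $t$ to a statement on $\mu_0$ and then split it into two endpoint estimates. Fix $f\in C_c^\infty(\R^n)$, put $g:=f\circ F_t$ and $h:=g-\int g\,\d\mu_0$, so that $\Var_{\mu_t}(f)=\Var_{\mu_0}(g)=\int h^2\,\d\mu_0$. Write $A:=\Hess\Phi=DT$, so that
\[
DF_t=(1-t)I+tA,\qquad \nabla g=DF_t\,(\nabla f\circ F_t).
\]
First, by a standard approximation I will reduce to smooth, everywhere finite potentials $V_0,V_1$ with $\Hess V_i\geq\kappa_i$. Concretely, I would convolve with small Gaussians, add small quadratic confinement, and truncate the $+\infty$ parts. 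Caffarelli-type regularity then makes $T$ a smooth diffeomorphism with $A$ positive definite. Stability of optimal maps gives $\mu_t^{(k)}\to\mu_t$ weakly, and the Poincar\'e inequality with a fixed constant passes to weak limits. So it suffices to treat the smooth case.

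\textbf{Step 1: duality via a vector field.} Suppose $X$ is a vector field on $\R^n$ with $-\diver_{\mu_0}X=h$, where $\diver_{\mu}X:=e^{V}\diver(e^{-V}X)$. Then integration by parts and Cauchy--Schwarz give
\[
\int h^2\,\d\mu_0=\int\langle\nabla g,X\rangle\,\d\mu_0
=\int\langle\nabla f\circ F_t,\,DF_tX\rangle\,\d\mu_0
\leq\Bigl(\int|\nabla f|^2\,\d\mu_t\Bigr)^{1/2}\,\|(1-t)X+tAX\|_{L^2(\mu_0)}.
\]
By the triangle inequality, it is enough to produce one field $X$ satisfying both
\[
\|X\|_{L^2(\mu_0)}\leq\kappa_0^{-1/2}\|h\|_{L^2(\mu_0)},\qquad
\|AX\|_{L^2(\mu_0)}\leq\kappa_1^{-1/2}\|h\|_{L^2(\mu_0)}.
\]
For the second bound I transport the field to $\mu_1$. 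Set $Y:=(AX)\circ T^{-1}$, which is the pushforward of $X$ by $T$. The Monge--Amp\`ere equation $e^{-V_0}=e^{-V_1\circ T}\det A$ then gives $\diver_{\mu_1}Y=(\diver_{\mu_0}X)\circ T^{-1}$. So $Y$ solves a divergence problem on $\mu_1$ with datum $h\circ T^{-1}$, whose $L^2(\mu_1)$ norm equals $\|h\|_{L^2(\mu_0)}$. The pair $(X,Y)$ is the Bochner couple, and the second bound becomes $\|Y\|_{L^2(\mu_1)}\leq\kappa_1^{-1/2}\|h\circ T^{-1}\|_{L^2(\mu_1)}$.

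\textbf{Step 2: choice of $X$ via a weighted Poisson equation.} If only one endpoint mattered, I would take $X=\nabla u$ with $-L_0u=h$. Bochner's formula $\int(L_0u)^2\,\d\mu_0=\int\bigl(\|\Hess u\|_\HS^2+\langle\Hess V_0\nabla u,\nabla u\rangle\bigr)\,\d\mu_0$ then yields $\kappa_0\int|\nabla u|^2\,\d\mu_0\leq\int h^2\,\d\mu_0$. However, $X$ and $Y$ cannot both be gradients. I would therefore solve the weighted equation
\[
-\diver_{\mu_0}\bigl(M\nabla u\bigr)=h,
\]
with a symmetric weight $M$ built from $A$. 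My first candidate is $M=A^{-1/2}$, which makes the problem symmetric between the two endpoints; the alternative is $M=DF_t^{-1}$. I then set $X=M\nabla u$, so that $AX=A^{1/2}\nabla u$ when $M=A^{-1/2}$. The symmetry of $A$ should make the integration-by-parts structure identical at both ends.

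\textbf{Step 3: the two separate Bochner estimates (main obstacle).} The remaining task is a Bochner identity for non-gradient fields $X$ on $\mu_0$, and for $Y$ on $\mu_1$. In these identities the commutator and curl terms produced by the weight must have a sign, using the fact that $A$ is itself a Hessian and using the Monge--Amp\`ere equation. The curvature contributions should then reduce to $\langle\Hess V_0X,X\rangle\geq\kappa_0|X|^2$ and $\langle\Hess V_1Y,Y\rangle\geq\kappa_1|Y|^2$ respectively, each with its sharp constant. I would first attempt this with the $\Gamma_2$ calculus for the weighted operator $u\mapsto\diver_{\mu_0}(M\nabla u)$. If the error terms do not close, I would instead adjust the weight so that the two estimates close separately.

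Finally, for the equality case I would read off from Cauchy--Schwarz and from equality in both Bochner inequalities that $\Hess u$ must vanish and the curvature bounds must be saturated along $\nabla u$. This forces Gaussian factors of variance $\kappa_0^{-1}$ and $\kappa_1^{-1}$ in a common direction.
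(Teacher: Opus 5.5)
\textbf{There is a genuine gap, in Step 3.} Your Step 1 is correct and is exactly the paper's interpolation step. It suffices to find one field $X$ with $-\diver_{\mu_0}X=h$, $\norm{X}_{L^2(\mu_0)}\le\kappa_0^{-1/2}\norm{h}_{L^2(\mu_0)}$ and $\norm{AX}_{L^2(\mu_0)}\le\kappa_1^{-1/2}\norm{h}_{L^2(\mu_0)}$; then $Y=(AX)\circ T^{-1}$ automatically carries the transported datum. But Step 3 carries the whole content of the theorem, and you leave it as an open obstacle. Neither weight you propose closes it.

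The vector Bochner identity holds for arbitrary fields:
\[
\int(\diver_\mu X)^2\,\d\mu=\int\{\tr((DX)^2)+\Hess V[X,X]\}\,\d\mu .
\]
So there are no curl or commutator terms to manage. You only need $\tr((DX)^2)\ge0$ at the source and $\tr((DY)^2)\ge0$ at the target. The clean way to get this is to show each matrix is similar, via a positive definite conjugation, to a symmetric one. At the target, $D(Y\circ T)=((DY)\circ T)A$ with $Y\circ T=AX$, so you want $AX$ to be a gradient.

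Your two candidates fail as follows.
\begin{itemize}
\item With $M=A^{-1/2}$, $AX=A^{1/2}\nabla u$. Its Jacobian contains $A^{1/2}\Hess u$ and is not symmetric in general. At the source, $A^{1/2}DX=\Hess u-N$ with $N_{ij}=\partial_j(A^{1/2})_{ik}X_k$. This $N$ is not symmetric because $A^{1/2}$ is not a Hessian. So neither trace term has a sign.
\item With $M=DF_t^{-1}$, the source side works, since $DF_t$ is the Hessian of $(1-t)|x|^2/2+t\Phi$. But $AX=A\,DF_t^{-1}\nabla u$ is not a gradient for $t<1$, so the target estimate fails.
\end{itemize}
Falling back on $\Gamma_2$ calculus for $u\mapsto\diver_{\mu_0}(M\nabla u)$ would bring in the curvature of the associated Riemannian metric, for which you have no bound.

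\textbf{The missing idea is the choice $M=A^{-1}$.} Solve $-\diver_{\mu_0}(A^{-1}\nabla u)=h$, and set $X=A^{-1}\nabla u$ and $Y\circ T=\nabla u$.
\begin{itemize}
\item Target: $((DY)\circ T)A=\Hess u$. Hence $(DY)\circ T$ is similar to $A^{-1/2}\Hess u\,A^{-1/2}$, and $\tr((DY)^2)\ge0$.
\item Source: differentiating $AX=\nabla u$ gives $\Hess u=C_X+A\,DX$ with $(C_X)_{ij}=\partial_{ijk}\Phi\,X_k$. This $C_X$ is symmetric because $A$ is a Hessian. So $A\,DX$ is symmetric, $DX$ is similar to $A^{-1/2}(A\,DX)A^{-1/2}$, and $\tr((DX)^2)\ge0$.
\end{itemize}
The flat Bochner identity at each endpoint, with $\Hess V_i\succeq\kappa_iI$ and the common datum, then gives both sharp bounds. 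No curvature of a Hessian metric or of $\mu_t$ enters.

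This choice is already symmetric between the endpoints, which was your motivation for $A^{-1/2}$. With $v=u\circ T^{-1}$ one has $X=(\nabla v)\circ T$, mirroring $Y\circ T=\nabla u$.

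\textbf{What rigour still requires.} You need the points the paper supplies:
\begin{itemize}
\item two-sided Hessian bounds on the regularized potentials (Gaussian convolution gives the upper bound, and a dilation or your confinement restores $\kappa_i$);
\item hence $\lambda I\preceq A\preceq\Lambda I$ by Caffarelli's theorem;
\item Lax--Milgram for the weighted equation;
\item density of $\{-\diver_{\mu_0}(A^{-1}\nabla\varphi):\varphi\in C_c^\infty\}$ in mean-zero $L^2(\mu_0)$, to transfer the a priori estimates from smooth $u$ to the weak solution.
\end{itemize}
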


\begin{remark}
The coefficient in \eqref{eq:main} is optimal for every
$t\in[0,1]$ and every $\kappa_0,\kappa_1>0$. Indeed, let
$\mu_i=N(0,\kappa_i^{-1}I)$ for $i=0,1$, where $N(m,\Sigma)$ denotes
the Gaussian law with mean $m$ and covariance matrix $\Sigma$. Then
$T(x)=\sqrt{\kappa_0/\kappa_1}\,x$ is the unique optimal transport map
from $\mu_0$ to $\mu_1$. It follows that
\[
\mu_t
=N\!\left(
0,
\left(\frac{1-t}{\sqrt{\kappa_0}}
+\frac{t}{\sqrt{\kappa_1}}\right)^2I
\right).
\]
Thus the coefficient is optimal because
$C_P(N(0,a^2I))=a^2$.
\end{remark}

\begin{remark}
	A stronger statement would replace $\kappa_i^{-1/2}$ in
	Theorem~\ref{thm:main} by $\sqrt{C_P(\mu_i)}$; that is, it would assert
	\[
	\sqrt{C_P(\mu_t)}
	\leq
	(1-t)\sqrt{C_P(\mu_0)}
	+t\sqrt{C_P(\mu_1)}.
	\]
	This interpolation bound in terms of the endpoint Poincar\'e constants
	fails in general, even for strongly
	log-concave endpoints; see
	Proposition~\ref{prop:actual-CP-counterexample}.
\end{remark} 

We next characterize equality at an interior time.

\begin{theorem}[Equality characterization]
\label{thm:equality}
Fix $t\in(0,1)$ and denote $m_i=\int x\,\d\mu_i$. Then
\[
 C_P(\mu_t)
 =\left(\frac{1-t}{\sqrt{\kappa_0}}
        +\frac{t}{\sqrt{\kappa_1}}\right)^2
\]
if and only if there exists a unit
vector $e\in\R^n$ such that, under the orthogonal decomposition
$\R^n=\R e\oplus e^\perp$, the coordinate map
$\mathcal C_e(x)=(e\cdot x,P_{e^\perp}x)$ satisfies
\begin{equation}\label{eq:factor}
 (\mathcal C_e)_\#\mu_i
 =N\!\left(\ip{e}{m_i},\kappa_i^{-1}\right)\otimes\nu_i, \qquad \nu_i\in\mathcal P(e^\perp)
 \qquad i=0,1.
\end{equation}
In this case the optimal transport splits along the same decomposition and
\[
 C_P(\mu_s)
 =\left(\frac{1-s}{\sqrt{\kappa_0}}
        +\frac{s}{\sqrt{\kappa_1}}\right)^2,
 \qquad\text{for every }s\in[0,1].
\]
\end{theorem}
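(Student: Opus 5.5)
The plan is to prove the two directions separately. The ``if'' direction is a computation once we show that the optimal map splits. The ``only if'' direction comes from a rigidity analysis of the coupled Bochner argument behind Theorem~\ref{thm:main}.

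\emph{Sufficiency.} Assume \eqref{eq:factor} holds. In the coordinates $\mathcal C_e$ each $\mu_i$ is a product. The map
\[
T(x)=\Bigl(\ip{e}{m_1}+\sqrt{\kappa_0/\kappa_1}\,(e\cdot x-\ip{e}{m_0})\Bigr)e+S(P_{e^\perp}x),
\]
with $S$ the Brenier map from $\nu_0$ to $\nu_1$, is the gradient of a convex function of split form and pushes $\mu_0$ to $\mu_1$. By uniqueness in Brenier's theorem it is the optimal map. Hence $\mu_s=N(\cdot,a_s^2)\otimes\nu_s$ with $a_s=(1-s)\kappa_0^{-1/2}+s\kappa_1^{-1/2}$, where $\nu_s$ is the geodesic between $\nu_0$ and $\nu_1$. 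Testing the Poincar\'e inequality with $f(x)=e\cdot x$ (after a cutoff approximation, using Gaussian moments) gives $C_P(\mu_s)\ge a_s^2$. Theorem~\ref{thm:main} gives the reverse inequality.

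\emph{Necessity.} Suppose $C_P(\mu_t)=a_t^2$ at an interior time $t$.
\begin{enumerate}
\item Pass to an extremal. Since $\mu_t$ is the pushforward of a strongly log-concave measure by a map that is Lipschitz-like on the relevant scales, I first want compactness: either a maximizing sequence for the Rayleigh quotient converges to a genuine eigenfunction, or one works with near-extremals quantitatively. I would try to show that the spectrum of $\mu_t$ below $a_t^{-2}$ is discrete, for instance via tightness and exponential moments inherited from the endpoints, so that an extremal $f$ exists.
\item Trace equality through the coupled Bochner method. Pull $f$ back to the two endpoints and solve the weighted Poisson equation to obtain the Bochner couple $(X_0,X_1)$ with $X_1\circ T=DT\,X_0$ in the appropriate sense. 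Equality in the final interpolation step forces equality in each endpoint Bochner estimate separately. It also forces equality in the Cauchy--Schwarz/triangle step that combines them with weights $(1-t)\kappa_0^{-1/2}$ and $t\kappa_1^{-1/2}$.
\item Read off the rigidity. Equality in the $\kappa_0$-Bochner inequality gives $\nabla X_0\equiv 0$, so $X_0=v_0$ is constant, and $\ip{(\Hess V_0-\kappa_0)v_0}{v_0}=0$ $\mu_0$-a.e. By convexity of $V_0-\kappa_0|x|^2/2$ this means that function is affine along $v_0$. The same holds at the other end with $v_1$. Equality in the combination step should force $DT\,v_0=\sqrt{\kappa_0/\kappa_1}\,v_1$, with $v_0,v_1$ parallel, giving a common unit $e$.
\item From affinity of $V_i-\kappa_i|x|^2/2$ in direction $e$ and integrability, deduce that the $e$-marginal density factors as a Gaussian with variance $\kappa_i^{-1}$ independently of the orthogonal variable. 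The mean is then forced to be $\ip{e}{m_i}$, which is \eqref{eq:factor}.
\end{enumerate}

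Extended-valued potentials require checking that the support is a cylinder $\R e\times K_i$, which follows because an affine-in-$e$ finite function must be finite on full lines.

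\textbf{Main obstacle.} The hard part is step 1 together with the regularity needed in step 3. Existence of an extremal and enough smoothness of $T$ are needed to make equality propagate pointwise. If compactness fails, I would argue by contradiction with a stability version: a sequence of near-extremals yields Bochner couples whose endpoint fields converge in $H^1$ to constants, which still enforces the Gaussian factor in the limit.
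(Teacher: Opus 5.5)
Your sufficiency argument is correct and is essentially the paper's Step~5: the optimal plan splits along $\R e\oplus e^\perp$, and you then test with $e\cdot x$ after cutoffs.

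The necessity direction has a genuine gap. Your steps 2--4 are an exact-equality analysis. They need three things: a smooth Brenier map with $H=DT$ and $H^{-1}$ bounded, smooth endpoint potentials, and an actual extremal for $\mu_t$. For general, possibly extended-valued, endpoints none of these is available. The map $T$ need not be Lipschitz, and $\mu_t$ need not be log-concave. Tightness or exponential moments do not give compactness of $H^1(\mu_t)\hookrightarrow L^2(\mu_t)$; already $\tfrac12e^{-|x|}\,\d x$ on $\R$ has no Poincar\'e extremal. So step~1 is unsupported.

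More importantly, an exact extremal would not help. The Bochner couple can only be built after regularizing the endpoints, and there equality is no longer exact. The near-extremal regime is therefore unavoidable, not a fallback. In that regime, your claim that the endpoint fields converge in $H^1$ to constants does not follow from the Bochner deficits. These control only $\int\|H_j^{1/2}DX_jH_j^{-1/2}\|_{\HS}^2\,\d\mu_0^j$ and $\int\|H_j^{-1/2}\Hess u_jH_j^{-1/2}\|_{\HS}^2\,\d\mu_0^j$. The regularized Brenier Hessians $H_j$ have no uniform two-sided bounds: the Caffarelli bounds of Lemma~\ref{lem:transport-regularity} degenerate as $\varepsilon\to0$. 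Your pointwise rigidity statement ($V_0-\kappa_0|x|^2/2$ affine along $e$) also has no direct weak-limit counterpart.

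The idea you are missing is to transfer near-equality to the endpoint \emph{Poincar\'e} inequalities, which need only the uniform lower curvature bound. For the regularized couples write $X_j=H_j^{-1}\nabla u_j$ and $\mathsf Y_j=\nabla u_j$. Set $v_j=u_j\circ T_j^{-1}$, so that $(\nabla v_j)\circ T_j=X_j$; note the crossing, in which the source field is the gradient of a target function.
\begin{itemize}
\item Asymptotic sharpness of the chain gives $\|X_j\|\to\kappa_0^{-1/2}$, $\|\mathsf Y_j\|\to\kappa_1^{-1/2}$ and $\langle X_j,\mathsf Y_j\rangle\to(\kappa_0\kappa_1)^{-1/2}$.
\item Since $\int g_ju_j\,\d\mu_0^j=\langle X_j,\mathsf Y_j\rangle$ and $\|u_j\|=\|v_j\|\le\min\{\kappa_0^{-1/2}\|\mathsf Y_j\|,\kappa_1^{-1/2}\|X_j\|\}$, both $u_j$ and $v_j$ are near-extremizers at $\mu_0^j$ and $\mu_1^j$ respectively.
\item Lemma~\ref{lem:near-equality} then gives $L^2$ (not $H^1$) convergence of $\nabla u_j$ and $\nabla v_j$ to constants $p_j,q_j$, with $\sqrt{\kappa_1}\,p_j\to e$ and $\sqrt{\kappa_0}\,q_j\to e$.
\item Passing to the limit in the semidefinite forms $Q_i^{(j)}$ yields the weak Gaussian integration-by-parts identity \eqref{eq:Gaussian-IBP}. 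Characteristic functions then give the factor, with no pointwise information on $V_i$ or $T$.
\end{itemize}

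Two smaller slips. First, from $X_1\circ T=DT\,X_0$ and alignment you get $DT\,v_0=v_1=\sqrt{\kappa_0/\kappa_1}\,v_0$, not $DT\,v_0=\sqrt{\kappa_0/\kappa_1}\,v_1$. Second, affinity along each line plus integrability does not give independence, because a slope varying across lines shifts the conditional mean. You need $\Hess W_0\,e=0$, which follows from positive semidefiniteness of $\Hess W_0$ together with its vanishing $(e,e)$ entry.
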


The restriction $t\in(0,1)$ is essential in the rigidity statement.
At an endpoint, equality constrains only that endpoint and cannot force
a Gaussian factor in the other measure.
The proof also identifies the maximal subspace of directions along
which both endpoints split off the corresponding Gaussian factors;
see Proposition~\ref{prop:maximal-Gaussian-factor}.
In \cite[Question 1.6]{AishwaryaRotem}, Aishwarya and Rotem asked about
the unit Poincar\'e bound along the optimal Wasserstein interpolation
between even $1$-strongly log-concave endpoints.  Their question was
restricted to odd test functions; see also
\cite[Question 1.4]{AishwaryaLi}.  Theorem~\ref{thm:main} proves a
stronger result.  It requires neither symmetry of the endpoints nor a
parity assumption on the test function.  Aishwarya and Rotem proved
their formulation in dimension one \cite[Theorem~4.11]{AishwaryaRotem},
for two Gaussian endpoints, and when one endpoint is the standard Gaussian
\cite[Proposition~4.12]{AishwaryaRotem}.

Aishwarya and Li used the common-source coupling
$(T_0(Z),T_1(Z))$, where $Z$ is standard Gaussian and $T_i$ is the Brenier
map from the Gaussian measure to the $i$-th endpoint. Every interpolant
$((1-t)T_0+tT_1)(Z)$ is a $1$-Lipschitz image of $Z$ and therefore satisfies
the full Poincar\'e inequality with constant $1$
\cite[Remark~1]{AishwaryaLi}. This coupling is generally not optimal
between the two prescribed endpoints. To the best of our knowledge, the
full Poincar\'e bound for the quadratic optimal coupling of the prescribed
endpoints had remained open. Theorem~\ref{thm:main} proves it along the quadratic Wasserstein
geodesic between the prescribed endpoints.

The full Poincar\'e estimate also controls the fluctuation of the
velocity gradient in the second variation of Gaussian relative entropy.
We first remove the translation component determined by the means and
then apply the estimate componentwise. Let $\gamma$ be the standard Gaussian measure
on $\R^n$. For a probability measure $\mu$, define its relative entropy
with respect to $\gamma$ by
\[
 \Ent_\gamma(\mu)
 =\int\rho\log\rho\,\d\gamma
 \quad\text{if }\,\d\mu=\rho\,\d\gamma,
\]
with the convention $0\log0=0$, and set
$\Ent_\gamma(\mu)=+\infty$ otherwise. Define
\[
 \mathcal D(\mu)
 =\Ent_\gamma(\mu)-\frac12|b_\mu|^2.
\]
Thus $\mathcal D(\mu)$ is the \emph{centered Gaussian relative entropy}
of $\mu$; equivalently, it is the Gaussian relative entropy of the
centered translate of $\mu$. For two measures, define
\[
W_{\rm c}^2
 =W_2^2(\mu_0,\mu_1)-|b_{\mu_1}-b_{\mu_0}|^2.
\]

\begin{theorem}[Centered Gaussian relative entropy]
\label{thm:centered-entropy}
Let $\mu_0$ and $\mu_1$ be $1$-strongly log-concave, and let
$(\mu_t)_{0\leq t\leq1}$ be their quadratic Wasserstein geodesic.  Then
$0\leq W_{\rm c}^2\leq2n$, and
$\mathcal D_t=\mathcal D(\mu_t)$ satisfies the following distortion
inequality along the whole geodesic:
\begin{equation}\label{eq:intro-entropy-distortion}
 e^{-\mathcal D_t/n}
 \geq
 \frac{\sin((1-t)\omega)}{\sin\omega}
 e^{-\mathcal D_0/n}
 +\frac{\sin(t\omega)}{\sin\omega}
 e^{-\mathcal D_1/n},\qquad \omega=\sqrt{\frac{2}{n}}\,W_{\rm c}.
\end{equation}
The sine quotients are understood by continuity when $\omega=0$, in
which case they equal $1-t$ and $t$, respectively.
Moreover, equality in \eqref{eq:intro-entropy-distortion} at some
$t\in(0,1)$ holds if and only if $\mu_1$ is a translate of $\mu_0$.
In this case, equality holds for every $t\in[0,1]$. 

In particular, if the endpoints have the same barycenter, then the same statements
hold with $\Ent_\gamma(\mu_t)$ in place of $\mathcal D_t$ and with
$W_2(\mu_0,\mu_1)$ in place of $W_{\rm c}$. In this case, equality at
an interior time holds if and only if $\mu_0=\mu_1$. 
\end{theorem}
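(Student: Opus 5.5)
The plan is to reduce to centered endpoints, compute the first two time derivatives of $\mathcal D_t$ along the geodesic, and show that $f(t)=e^{-\mathcal D_t/n}$ satisfies the differential inequality $f''+\omega^2 f\leq0$ on $(0,1)$. The sine-type comparison then gives \eqref{eq:intro-entropy-distortion}.

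\emph{Reduction and the bound on $W_{\rm c}^2$.} First I will remove translations. Translating $\mu_0$ by $-b_{\mu_0}$ and $\mu_1$ by $-b_{\mu_1}$ preserves $1$-strong log-concavity. It also replaces $T$ by $T(\cdot+b_{\mu_0})-b_{\mu_1}$, which is again a gradient of a convex function, so it is still the Brenier map. The geodesic is translated by $(1-t)b_{\mu_0}+tb_{\mu_1}$, and $\mathcal D$ is translation invariant by construction. The identity $W_2^2=W_{\rm c}^2+|b_{\mu_1}-b_{\mu_0}|^2$ for translates shows that $W_{\rm c}$ is the $W_2$-distance of the centered endpoints. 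So we may assume $b_{\mu_0}=b_{\mu_1}=0$, in which case $\mathcal D=\Ent_\gamma$ and $W_{\rm c}=W_2$. The bound $W_{\rm c}^2\leq 2n$ then follows from $W_2^2(\mu_0,\mu_1)\leq \int|x|^2d\mu_0+\int|x|^2d\mu_1$ (the independent coupling of centered measures). Each second moment equals $\tr\Cov(\mu_i)\leq n\,C_P(\mu_i)\leq n$, by Brascamp--Lieb applied to the coordinates.

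\emph{Derivatives of the entropy.} Write $A_t=(1-t)I+tDT$, $\mathcal D_t=\int\log\rho_0\,d\mu_0-\int\log\det A_t\,d\mu_0+\frac12\int|F_t|^2d\mu_0+$const. Let $v_t$ be the velocity field with $v_t\circ F_t=T-I$, and $B=(DT-I)A_t^{-1}=Dv_t\circ F_t$. Then
\[
\mathcal D_t'=\int\big(\ip{x}{v_t}-\diver v_t\big)\,d\mu_t,\qquad
\mathcal D_t''=W_2^2+\int|Dv_t|_{\HS}^2\,d\mu_t .
\]
Here $Dv_t$ is symmetric, since $DT$ is symmetric and commutes with $A_t$. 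Justifying these formulas for extended-valued potentials needs the usual Alexandrov/McCann regularity, or an approximation by smooth strongly log-concave potentials; this is routine. The target inequality $f''+\omega^2f\leq0$ is then equivalent to
\[
\int|Dv_t|^2_{\HS}\,d\mu_t\;\geq\;\frac{(\mathcal D_t')^2}{n}+W_2^2 .
\]

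\emph{Key step (main obstacle).} This is where Theorem~\ref{thm:main} enters, with $\kappa_0=\kappa_1=1$, so that $C_P(\mu_t)\leq1$. Set $M=\int Dv_t\,d\mu_t$ and split $\int|Dv_t|^2=|M|_{\HS}^2+\int|Dv_t-M|^2$.

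For the second term, apply the Poincar\'e inequality for $\mu_t$ componentwise to $v_t-Mx$. Since $\mu_t$ is centered, this gives
\[
\int|Dv_t-M|^2\,d\mu_t\;\geq\;\Var_{\mu_t}(v_t-Mx).
\]
For the first term, $|M|^2_{\HS}\geq(\tr M)^2/n$. Also $\int v_t\,d\mu_t=0$, so $\int|v_t|^2d\mu_t=W_2^2$.

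What remains is an algebraic optimization. Using the Gaussian integration-by-parts structure of $\mathcal D'_t=\int\ip{x}{v_t}\,d\mu_t-\tr M$, one must show that
\[
|M|^2_{\HS}+\Var_{\mu_t}(v_t-Mx)-W_2^2\;\geq\;\frac{1}{n}\Big(\int\ip{x}{v_t}\,d\mu_t-\tr M\Big)^2 .
\]
I expect this to require a further Cauchy--Schwarz, combined with $\Cov(\mu_t)\leq I$ (itself a consequence of $C_P(\mu_t)\leq1$). This matching of the cross terms is the step I expect to be hardest, and it is where the factor $2$ in $\omega^2=2W_{\rm c}^2/n$ must come out exactly. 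If the direct split does not close, I would instead try the Poincar\'e inequality on $v_t-\lambda Mx$ and optimize over $\lambda$.

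\emph{Equality.} Equality at an interior $t$ forces equality in every step above. Equality in the sine comparison forces it for all $s$. Equality $|M|^2=(\tr M)^2/n$ makes $M$ a multiple of the identity, and equality in the Cauchy--Schwarz steps should force $Dv_t\equiv M$ together with $W_{\rm c}$-type degeneracy. From this I expect to conclude $DT=I$, so that $T$ is a translation and $\mu_1$ is a translate of $\mu_0$. Conversely, if $\mu_1$ is a translate of $\mu_0$, then $\omega=0$ and $\mathcal D_t$ is constant, so equality holds for every $t$. In the equal-barycenter case a translate must coincide with $\mu_0$, which gives the final assertion.
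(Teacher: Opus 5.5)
\textbf{Correct parts.} Your reduction to centered endpoints and the bound $W_{\rm c}^2\le 2n$ are correct. So are your Lagrangian formulas, $\mathcal D_t'=\int\ip{x}{v_t}\,\d\mu_t-\tr M$ and $\mathcal D_t''=W_2^2+\int\|Dv_t\|_{\HS}^2\,\d\mu_t$; these are the Lagrangian form of the paper's $E_t''=\int\Gamma_2(\theta_t)\,\d\mu_t$. The reformulation of $f''+\omega^2f\le0$ as $\int\|Dv_t\|_{\HS}^2\,\d\mu_t\ge W_2^2+(\mathcal D_t')^2/n$ is also correct.

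\textbf{The key step fails.} The algebraic inequality you reduce to is false. In dimension one, set $C=\int xv_t\,\d\mu_t$ and $\Sigma=\int x^2\,\d\mu_t$. Your inequality then reads $M^2+\int(v_t-Mx)^2\,\d\mu_t-W_2^2\ge(C-M)^2$, which is equivalent to $C^2\le M^2\Sigma$. But $M=\int v_t'\,\d\mu_t=\int v_tV_t'\,\d\mu_t$, and $V_t'$ is not proportional to $x$ unless $\mu_t$ is Gaussian. So $M=0\neq C$ can occur. For instance, take $V_0=x^2+x^4$, $T=I+\epsilon\phi$ with $\phi\in C_c^\infty$ orthogonal in $L^2(\mu_0)$ to $1$ and $V_0'$ but not to $x$, and $t$ near $0$. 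In that case your split only yields $\int|Dv_t|^2\ge W_2^2$, and the $(\mathcal D_t')^2/n$ term is lost. Your fallback $v_t-\lambda Mx$ degenerates in exactly the same situation. The bound $\Cov(\mu_t)\preceq I$ cannot help either: $\Sigma$ enters your left-hand side through $+\tr(M\Sigma M^{T})$, so an upper bound on it points the wrong way.

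\textbf{The fix.} Subtract a multiple of the identity tuned to $\mathcal D_t'$, not to $M$. Apply $C_P(\mu_t)\le1$ componentwise to the centered field $v_t-\lambda x=\nabla(\theta_t-\frac\lambda2|x|^2)$. Expanding $\int\|Dv_t-\lambda I\|_{\HS}^2\,\d\mu_t\ge\int|v_t-\lambda x|^2\,\d\mu_t$ gives
\[
\int\|Dv_t\|_{\HS}^2\,\d\mu_t\ge W_2^2-2\lambda\mathcal D_t'-n\lambda^2+\lambda^2\int|x|^2\,\d\mu_t .
\]
Choosing $\lambda=-\mathcal D_t'/n$ yields the lower bound $W_2^2+(\mathcal D_t')^2/n+(\mathcal D_t')^2n^{-2}\int|x|^2\,\d\mu_t$, with no covariance bound needed. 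This is Lemma~\ref{lem:mean-Gamma2}, following Aishwarya--Rotem. The factor $2$ in $\omega^2$ comes from two copies of $W_2^2$: one in $\mathcal D_t''$ and one on the right-hand side of the Poincar\'e inequality.

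\textbf{The equality part is not yet a proof.} Your proposed mechanism ($M$ a multiple of $I$, then $Dv_t\equiv M$, hence $DT=I$) rests on the failing decomposition. Moreover, equality in a Poincar\'e step only makes the components extremals; it does not force a constant Jacobian. The correct step hands you a nonnegative remainder $(\mathcal D_t')^2n^{-2}\int|x|^2\,\d\mu_t$ for free, and the paper keeps it. It writes the gap at $t_0$ as $\int_0^1G_\omega(t_0,s)(-U_s''-\omega^2U_s)\,\d s$, where the Dirichlet Green kernel is strictly positive because $\omega\le2<\pi$. Equality then forces $\mathcal D_s'\equiv0$, and $\mathcal D''\ge2W_{\rm c}^2$ gives $W_{\rm c}=0$.

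\textbf{Nonsmooth endpoints need more than routine approximation.} The argument above works only for smooth endpoints. Equality for the limiting geodesic is not inherited by the regularized geodesics. The paper therefore proves a quantitative bound: the gap $\Delta_\varepsilon(t_0)$ of the regularized geodesic satisfies $\Delta_\varepsilon(t_0)\ge cW_\varepsilon^4$, with $c$ uniform in $\varepsilon$. This uses the kept remainder on $[1/3,2/3]$, uniform positivity of second moments, and $\mathcal D''\ge2W_\varepsilon^2$. It is combined with $\Delta_\varepsilon(t_0)\to0$, which follows from convergence of the endpoint entropies and lower semicontinuity of entropy at $t_0$.
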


\paragraph{Infinitesimal form.}
Under the smooth two-sided Hessian hypotheses of
Section~\ref{sec:smooth}, the proof first establishes
\begin{equation}\label{eq:intro-entropy-differential}
 \mathcal D_t''
 \geq2W_{\rm c}^2+\frac1n(\mathcal D_t')^2.
\end{equation}
This is the differential form of the $(2,n)$-convexity estimate; the
finite-time inequality \eqref{eq:intro-entropy-distortion} follows by a
one-dimensional comparison argument and then by regularization.

When the endpoints have the same barycenter,
Theorem~\ref{thm:centered-entropy} establishes $(2,n)$-convexity
along the Wasserstein geodesic for arbitrary
$1$-strongly log-concave measures. Aishwarya and Rotem obtained the
same differential estimate under a pathwise odd Poincar\'e assumption
\cite[Theorem~4.6]{AishwaryaRotem} and verified this assumption in
dimension one and along two further classes of geodesics
\cite[Theorem~4.11 and Proposition~4.12]{AishwaryaRotem}. For even
	endpoints, Aishwarya and Li obtained an entropy inequality with sine
coefficients along the interpolation of the two Brenier maps transporting
the standard Gaussian measure to the respective endpoints. They also
characterized equality in the resulting affine inequality
\cite[Theorem~1.5 and Remark~1]{AishwaryaLi}. The present theorem
requires neither symmetry nor a pathwise odd Poincar\'e assumption.  It
proves the inequality with sine coefficients along the quadratic
Wasserstein geodesic and characterizes equality.

Theorem~\ref{thm:centered-entropy} also yields a Gaussian
Brunn--Minkowski inequality.  Gardner and Zvavitch conjectured the
dimensional Gaussian Brunn--Minkowski inequality for convex bodies
containing the origin \cite{GardnerZvavitch}.  Nayar and Tkocz disproved
this conjecture in the nonsymmetric case, even under the assumption that
both bodies contain the origin, and proposed its origin-symmetric form
\cite{NayarTkocz}.  Following important local work of Kolesnikov and
Livshyts \cite{KolesnikovLivshyts}, the origin-symmetric conjecture was
proved by Eskenazis and Moschidis \cite{EskenazisMoschidis}.
Corollary~\ref{cor:barycenter-BM} replaces origin symmetry by the
assumption that the conditioned Gaussian measures have the same
barycenter.  Without this assumption, it gives an inequality with
explicit barycenter terms.
For a convex Borel set $K$ with $\gamma(K)>0$, denote
\[
 \gamma_K(A)=\frac{\gamma(A\cap K)}{\gamma(K)}
 \qquad\text{and}\qquad
 b_K=\int x\,\d\gamma_K(x),
\]
the Gaussian measure conditioned on $K$ and its barycenter, respectively.

\begin{corollary}[Gaussian Brunn--Minkowski with barycenter terms]
\label{cor:barycenter-BM}
Let $K_0,K_1\subset\R^n$ be convex Borel sets with
$\gamma(K_i)>0$. For every $t\in[0,1]$, set
$K_t=(1-t)K_0+tK_1$ and
$b_t=(1-t)b_{K_0}+tb_{K_1}$.  Then
\begin{equation}\label{eq:intro-barycenter-BM}
 e^{|b_t|^2/(2n)}\gamma(K_t)^{1/n}
 \geq
 (1-t)e^{|b_{K_0}|^2/(2n)}\gamma(K_0)^{1/n}
 +t e^{|b_{K_1}|^2/(2n)}\gamma(K_1)^{1/n}.
\end{equation}
Moreover, equality in \eqref{eq:intro-barycenter-BM} at some
$t\in(0,1)$ holds if and only if
$\bar K_0=\bar K_1$. In this case, equality holds for every
$t\in[0,1]$. 

In particular, if $b_{K_0}=b_{K_1}$, then
\[
 \gamma(K_t)^{1/n}
 \geq(1-t)\gamma(K_0)^{1/n}+t\gamma(K_1)^{1/n}.
\]
\end{corollary}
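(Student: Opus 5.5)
The plan is to deduce Corollary~\ref{cor:barycenter-BM} from Theorem~\ref{thm:centered-entropy} applied to $\mu_i=\gamma_{K_i}$. Each $\gamma_{K_i}$ has potential $\frac{|x|^2}{2}+\frac n2\log(2\pi)+\log\gamma(K_i)$ on $K_i$ and $+\infty$ off $K_i$. This potential is proper, and lower semicontinuous after replacing $K_i$ by its closure, which changes nothing since $\partial K_i$ is null for a convex set with $\gamma(K_i)>0$. So $\mu_i$ is $1$-strongly log-concave, and the theorem applies.

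\textbf{Step 1: compute the entropies.} The density of $\mu_i$ with respect to $\gamma$ is $\gamma(K_i)^{-1}\mathbf 1_{K_i}$, so
\[
\Ent_\gamma(\mu_i)=-\log\gamma(K_i),\qquad \mathcal D_i=-\log\gamma(K_i)-\tfrac12|b_{K_i}|^2 .
\]
Hence
\[
e^{-\mathcal D_i/n}=e^{|b_{K_i}|^2/(2n)}\gamma(K_i)^{1/n},
\]
which are exactly the right-hand side terms of \eqref{eq:intro-barycenter-BM}.

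\textbf{Step 2: bound the intermediate entropy.} Since $T$ pushes $\mu_0$ to $\mu_1$, the map $F_t$ sends $K_0$ into $K_t$ up to null sets, so $\mu_t$ is supported in $\bar K_t$. Also $b_{\mu_t}=(1-t)b_{K_0}+tb_{K_1}=b_t$, because $F_t$ is affine in $(I,T)$. For any probability measure $\mu$ supported in a set $A$, Jensen's inequality (the relative entropy with respect to $\gamma_A$ is nonnegative) gives $\Ent_\gamma(\mu)\ge-\log\gamma(A)$. Applied with $A=\bar K_t$, this yields
\[
e^{-\mathcal D_t/n}\le e^{|b_t|^2/(2n)}\gamma(\bar K_t)^{1/n}=e^{|b_t|^2/(2n)}\gamma(K_t)^{1/n},
\]
where the boundary of the convex set $K_t$ is again null. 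Moreover, $K_t$ is convex and Borel: it is convex, and its interior has full measure, so $K_t$ is measurable.

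\textbf{Step 3: drop the sine weights.} Since $0\le W_{\rm c}^2\le 2n$, we have $\omega\in[0,2]\subset[0,\pi)$. On this range the function $\omega\mapsto\sin(s\omega)/\sin\omega$ is nondecreasing, so it is at least $s$; this is elementary concavity of $\sin$ on $[0,\pi]$. Combining \eqref{eq:intro-entropy-distortion} with Steps 1 and 2 gives \eqref{eq:intro-barycenter-BM}. If $b_{K_0}=b_{K_1}$, then $b_t=b_{K_0}$, and cancelling the common factor gives the last display.

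\textbf{Step 4: the equality case.} This is the main obstacle. Suppose equality holds at some $t\in(0,1)$. Then every inequality in the chain is an equality. In particular
\[
\frac{\sin((1-t)\omega)}{\sin\omega}=1-t\quad\text{and}\quad \frac{\sin(t\omega)}{\sin\omega}=t .
\]
Since the sine quotient is strictly increasing in $\omega$ for $s\in(0,1)$, this forces $\omega=0$. Equality in \eqref{eq:intro-entropy-distortion} also holds. By Theorem~\ref{thm:centered-entropy}, $\mu_1$ is then a translate of $\mu_0$, say $\mu_1=(\cdot+v)_\#\mu_0$. Comparing supports gives $\bar K_1=\bar K_0+v$. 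Comparing densities, the Gaussian restricted to $K_0+v$ must be proportional to the translated Gaussian $e^{-|x-v|^2/2}$ there. The ratio of these two is $e^{\langle x,v\rangle-|v|^2/2}$, which is constant on a full-dimensional set only if $v=0$. Hence $\bar K_0=\bar K_1$.

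Conversely, if $\bar K_0=\bar K_1$, then $K_t\subset\bar K_0$ and $K_t\supset\operatorname{int}K_0$. So $\gamma(K_t)=\gamma(K_0)$ and $b_t=b_{K_0}$, and equality holds for every $t\in[0,1]$.

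The delicate point is Step 4. One must make sure that strictness of the sine comparison and equality in Step 2 are consistent with the theorem's rigidity statement. One must also handle closures and null boundaries carefully, so that the statement is phrased in terms of $\bar K_i$.
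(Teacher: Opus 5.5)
Your proposal is correct and is essentially the paper's argument. The paper applies Theorem~\ref{thm:barycenter-BBL} for the inequality, and Corollary~\ref{cor:distortion-BBL} for the equality case, to $f=\mathbf 1_{K_0}$, $g=\mathbf 1_{K_1}$, $h=\mathbf 1_{K_t}$ with $p=\infty$; for indicators this reduces exactly to your bound $\Ent_\gamma(\mu_t)\geq-\log\gamma(K_t)$ combined with Theorem~\ref{thm:centered-entropy}, followed by the same strict sine comparison forcing $\omega=0$ and the same density comparison. Your direct route, which the paper itself uses for the equality case of Corollary~\ref{cor:distortion-BM}, just skips the Donsker--Varadhan and two-point H\"older steps; also, $\omega=0$ already means $W_{\rm c}=0$, so you do not need the rigidity clause of Theorem~\ref{thm:centered-entropy} to get the translation.
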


The condition $b_{K_0}=b_{K_1}$ does not require symmetry.  For example,
a regular simplex centered at the origin is invariant under its full
symmetry group.  It follows that the Gaussian measure conditioned on
the simplex has barycenter zero.  The simplex is not centrally symmetric when
$n\geq2$.  Section~\ref{sec:Gaussian-applications} gives a functional
version.  It extends the Gaussian Borell--Brascamp--Lieb inequality of
Aishwarya and Li beyond even log-concave functions.  The new assumption
is that the probability measures proportional to $f\,\d\gamma$ and
$g\,\d\gamma$ have the same barycenter.  We also provide HWI,
logarithmic Sobolev, Talagrand, and
Ornstein--Uhlenbeck estimates after centering.

\subsection{Proof strategy: coupled endpoint Bochner estimates}

The Bochner identity is used only at $\mu_0$ and $\mu_1$.  We construct
one vector field at each endpoint and interpolate these fields.  Thus no
curvature estimate is required for the potential of the intermediate
measure.  Treating the endpoints separately gives the optimal
coefficient and makes the equality case accessible.

\paragraph{Step 1: the weighted Poisson equation.}
We first work in the smooth setting. Let $T=\nabla\Phi$ be the Brenier
map and set $H=\Hess{\Phi}$. For a centered function $g\in L^2(\mu_0)$, we
solve
\[
-\diver_{\mu_0}(H^{-1}\nabla u)=g
\]
and define
\[
X=H^{-1}\nabla u,
\qquad
Y\circ T=\nabla u.
\]
The same scalar function $u$ defines both fields, while $H$ relates
them through $Y\circ T=HX$.  Smooth approximations of $(X,Y)$ are
Bochner couples in the sense of Definition~\ref{def:bochner-couple}.
A change-of-variables identity for weighted divergence shows that the
two fields have the same divergence datum.

\paragraph{Step 2: Bochner estimates at the two endpoints.}
At the source, differentiating $HX=\nabla u$ and using the symmetry of
the third derivatives of $\Phi$ shows that
\[
H\,DX \quad\text{is symmetric}.
\]
At the target, the chain rule gives
\[
((DY)\circ T)H=\Hess{u},
\]
so this matrix is also symmetric.  Because $H$ is positive definite,
each identity makes the corresponding trace term in the vector Bochner
formula nonnegative.  Applying that formula at the two endpoints gives
\[
\norm{X}_{L^2(\mu_0)}
\leq \kappa_0^{-1/2}\norm{g}_{L^2(\mu_0)},
\qquad
\norm{Y}_{L^2(\mu_1)}
\leq \kappa_1^{-1/2}\norm{g}_{L^2(\mu_0)}.
\]
Both estimates are sharp.  We keep them separate until the final
interpolation step.

\paragraph{Step 3: interpolation of the two fields.}
Let $F_t=(1-t)I+tT$ and define $q_t$ on $\mu_t$ by
\[
q_t\circ F_t=(1-t)X+t(Y\circ T).
\]
The same change-of-variables formula for weighted divergence yields
\[
-(\diver_{\mu_t}q_t)\circ F_t=g.
\]
The separate endpoint estimates imply
\[
\norm{q_t}_{L^2(\mu_t)}
\leq
\left(
\frac{1-t}{\sqrt{\kappa_0}}
+\frac{t}{\sqrt{\kappa_1}}
\right)
\norm{g}_{L^2(\mu_0)}.
\]
Taking $g=(f-\int f\,\d\mu_t)\circ F_t$ and integrating by parts proves
\eqref{eq:main} in the smooth setting. The general case follows by
regularization and stability of optimal transport.

This interpolation of the Bochner couple is the coupling step.  The
endpoint norms combine linearly before the final Cauchy--Schwarz
inequality.  This gives the sharp coefficient.

For the equality case, take a sequence that approaches equality in the
Poincar\'e estimate.  The two endpoint estimates and the triangle
inequality must then become asymptotically sharp.  The associated scalar
functions converge to affine functions in the same direction at both
endpoints.  Gaussian integration by parts gives the common Gaussian
factorization.  When $\kappa_0=\kappa_1=1$, the full Poincar\'e inequality
also gives a $\Gamma_2$ estimate after the translation component is
removed.  The entropy inequality follows from the standard variation
formulas and a one-dimensional ODE comparison.

\subsection{Relations to other methods and further directions}

\paragraph{Poisson equations, Stein's method, and isoperimetry.}
The construction also has a Stein interpretation.  For a probability
measure $\d\mu=e^{-V}\,\d x$, the weighted divergence $-\diver_\mu$ is a
vector-valued Stein operator.  Given a centered test function
$h$, one seeks a field $q$ satisfying
\[
 -\diver_\mu q=h
\]
and estimates its size or regularity.  Such estimates are called
Stein-factor bounds. Courtade, Fathi, and Pananjady constructed Stein
kernels by a variational argument under a Poincar\'e inequality
\cite{CourtadeFathiPananjady}. Mijoule, Rai\v{c}, Reinert, and Swan
introduced weak Stein equations and obtained weak Stein-factor bounds
from a Poincar\'e inequality \cite{MijouleRaicReinertSwan}. Fathi gave a
different construction of Stein kernels through moment maps
\cite{FathiSteinMoment}. In this terminology,
$\sqrt{C_P(\mu)}$ is the optimal $L^2$ norm of a right inverse of
$-\diver_\mu$ on centered functions.  Our field $q_t$ is an $L^2$ Stein
solution for $\mu_t$.  In the usual application, Stein's method compares
a measure with a reference law.  Here one Poisson solution gives
compatible fields at two prescribed endpoints.  Their Bochner estimates
control the $L^2$ norm of the interpolated field.

Jiang and Koskela used Poisson equations to derive local isoperimetric and
Sobolev inequalities on metric measure spaces from estimates for the
Cheeger--Poisson equation \cite{JiangKoskela}. In particular, an estimate of the form
\[
 \norm{\nabla u}_{L^\infty_{\mathrm{loc}}}
 \lesssim \norm{g}_{L^{Q,1}_{\mathrm{loc}}},
 \qquad \Delta u=g,
\]
combined with integration by parts and the dual characterization of perimeter yields
the isoperimetric inequality with the optimal exponent.  Jiang, Koskela,
and Yang later obtained local isoperimetric inequalities from
Lipschitz regularity of Cheeger-harmonic functions
\cite{JiangKoskelaYang}.  Their argument uses an $L^\infty$ gradient
estimate to control perimeter.  Our argument instead uses sharp $L^2$
estimates for two vector fields, one at each endpoint, to control
variance at the intermediate measure.

\paragraph{Hessian metrics induced by optimal transport and related flows.}

Kolesnikov associates with the Brenier map $T=\nabla\Phi$ the Hessian
metric $g=\Hess{\Phi}$. Its weighted Laplacian is
\[
L_\Phi=\diver_{\mu_0}\bigl((\Hess{\Phi})^{-1}\nabla\bigr),
\]
and the source--target integration-by-parts identity used below is stated
in \cite[Lemma~2.1 and (2.5)]{KolesnikovHessian}. Klartag used a dual
Bochner formula in his moment-map proof of Poincar\'e inequalities
\cite{KlartagMoment}. Kolesnikov and Milman derived Poincar\'e and
logarithmic Sobolev inequalities by choosing Riemannian metrics adapted to
the measure, including Hessian metrics arising from optimal transport
\cite{KolesnikovMilman}.

Klartag and Putterman proved monotonicity of the Poincar\'e constant under
Gaussian convolution, using a time-dependent diffusion in one proof and a
contracting transport in another \cite{KlartagPutterman}. Serres studied
the Poincar\'e constant along the Polchinski flow by dynamic
$\Gamma$-calculus \cite{SerresPolchinski}. Those papers concern paths
generated by the respective flows. Here the path is a prescribed
displacement interpolation, while the Bochner estimates are taken at the
two endpoints and linked by the differential of the Brenier map.

Our argument does not use a curvature bound for the Hessian metric. For
$X=H^{-1}\nabla u$ and $Y\circ T=\nabla u$, the two matrix identities in
Step~2 allow the Euclidean vector Bochner identity to be applied
separately at the source and the target. They give
\[
\kappa_0\norm{X}_{L^2(\mu_0)}^2
\leq\norm{g}_{L^2(\mu_0)}^2,
\qquad
\kappa_1\norm{Y}_{L^2(\mu_1)}^2
\leq\norm{g}_{L^2(\mu_0)}^2.
\]
Combining the two separate estimates only at the final interpolation
step gives the optimal coefficient in Theorem~\ref{thm:main}.  It also
allows us to analyze equality at both endpoints.

The matrix fact used to control $\tr((DX)^2)$ is the following: if $H$ is
positive definite and $H DX$ is symmetric, then $DX$ is similar to a
symmetric matrix and $\tr((DX)^2)\geq0$. The same conjugation appears in
\cite[proof of Lemma~3.1]{KlartagKolesnikov}; a related trace inequality
for differentials of common-source Brenier interpolants appears in
\cite[proof of Theorem~1.5]{AishwaryaLi}. Cheng and Zhou proved that
equality in the Bakry--\'Emery spectral-gap bound on a complete weighted
manifold forces a Gaussian factor \cite[Theorem~2]{ChengZhou}.
Quantitative Euclidean stability was developed by De~Philippis and
Figalli \cite[Section~4]{DePhilippisFigalli} and by Courtade and Fathi
\cite{CourtadeFathi}. Here equality at one interior time forces both
endpoint deficits to vanish and selects the same Gaussian direction at
the two endpoints.

The entropy application builds on the second-variation argument of
Aishwarya and Rotem
\cite[Lemma 4.3 and Theorem 4.6]{AishwaryaRotem}, formulated for even
measures and even velocity potentials. Aishwarya and Li
\cite[Theorems 1.5 and 1.7]{AishwaryaLi} obtained an entropy-power
inequality and a Gaussian Borell--Brascamp--Lieb inequality using a
coupling constructed from a common Gaussian source. Unlike that construction, our entropy
inequality is evaluated along the Wasserstein geodesic between the
prescribed endpoints. Bolley, Gentil, and Guillin derived dimensional
refinements of logarithmic Sobolev, Talagrand, and Brascamp--Lieb
inequalities \cite{BolleyGentilGuillin}. Eskenazis and Shenfeld developed
intrinsic dimensional functional inequalities on model spaces
\cite{EskenazisShenfeld}.

Cordero-Erausquin and Eskenazis proved a functional dimensional
Brunn--Minkowski inequality for rotationally invariant log-concave
measures \cite{CorderoEskenazis}. Malliaris, Melbourne, Roberto, and
Roysdon obtained Gaussian Borell--Brascamp--Lieb inequalities through
functional liftings of restricted geometric inequalities
\cite{MalliarisEtAl}. Our extension instead assumes
that the probability measures proportional to $f\,\d\gamma$ and
$g\,\d\gamma$ have the same barycenter.  It is proved along the
prescribed optimal transport interpolation. Kolesnikov and Milman
derived a weighted Brunn--Minkowski inequality from a boundary Poincar\'e
inequality along a geometric evolution
\cite{KolesnikovMilmanBoundary}. Eskenazis, Giannopoulos, and Tziotziou
proved a dimensional Brunn--Minkowski inequality for log-concave measures
by a diffusion argument \cite{EskenazisGiannopoulosTziotziou}.

\paragraph{Wasserstein barycenters.}
We next ask whether Theorem~\ref{thm:main} extends from two
endpoints to Wasserstein barycenters. Let
$\lambda_1,\ldots,\lambda_N>0$ with $\sum_{i=1}^N\lambda_i=1$, and let
$\bar\mu$ be a minimizer of
\[
\nu\longmapsto\sum_{i=1}^N\lambda_iW_2^2(\nu,\mu_i).
\]
For $N=2$, this reduces to a point on the Wasserstein geodesic, and
Theorem~\ref{thm:main} suggests the following question; see
\cite{AguehCarlier} for existence and basic properties of quadratic
Wasserstein barycenters.

\begin{question}[Wasserstein barycenter version]
\label{ques:barycenter-Poincare}
Suppose that $\mu_i$ is $\kappa_i$-strongly log-concave for
$i=1,\ldots,N$. Does their quadratic Wasserstein barycenter satisfy
\[
\sqrt{C_P(\bar\mu)}
\leq\sum_{i=1}^N\frac{\lambda_i}{\sqrt{\kappa_i}}\,?
\]
Can one also characterize equality by Gaussian factors in a common
direction, with variances $\kappa_i^{-1}$ for the respective input
measures?
\end{question}

For two endpoints, the Hessian of one Brenier potential and one scalar
Poisson equation produce a Bochner couple.  With more than two endpoints, one would need
compatible vector fields for several optimal transport maps and their
Hessians.  It is not clear how one Poisson equation could provide all
the required estimates. We leave
Question~\ref{ques:barycenter-Poincare} open. Han and Liu proved that the
Wasserstein Jensen inequality for Boltzmann entropy at barycenters of
arbitrary finite families characterizes finite-dimensional normed spaces
whose norm is induced by an inner product \cite{HanLiuHilbertian}.

\paragraph{Riemannian version.}
In a companion work in preparation, entitled \emph{Bochner Couples for
Optimal Transport on Riemannian Manifolds}, the authors study the
Riemannian analogue. There Jacobi fields describe the differential of
the transport maps in place of the Euclidean identity
$DT=\Hess{\Phi}$. A Codazzi-type tensor measures the failure of the matrix
symmetry used here. No result from that preprint is used in the
present paper.

\paragraph{Organization of the paper.}
Section~\ref{sec:smooth} constructs a Bochner couple from a weighted
Poisson equation and proves the interpolation estimate under smooth uniform
regularity assumptions.  Section~\ref{sec:approximation} develops a
curvature-preserving regularization and removes those auxiliary
assumptions by stability of optimal transport.  Section~\ref{sec:equality}
proves the equality characterization and identifies the maximal common
Gaussian factor of the endpoints.  Section~\ref{sec:Gaussian-applications}
applies the main theorem to centered Gaussian relative entropy, Gaussian
Brunn--Minkowski inequalities with barycenter terms, and Gaussian
functional inequalities after centering.

\section*{Acknowledgments}
We thank Heng Zhang, Shuhao Zhang, and Chiyu Zhou for helpful
discussions.

\section{Bochner couples from a weighted Poisson equation}
\label{sec:smooth}

We first prove the estimate under auxiliary uniform regularity
assumptions on the optimal transport map.  A weighted Poisson equation
involving the Hessian of the Brenier potential produces a Bochner couple.
Proposition~\ref{prop:smooth} interpolates its two fields.  Smoothness of
the endpoint potentials alone does not imply that the Brenier map is a
smooth global diffeomorphism.  The regularized potentials in
Section~\ref{sec:approximation} satisfy two-sided uniform Hessian bounds.
Caffarelli's contraction theory then gives global bi-Lipschitz control.
The regularity theorem of Cordero-Erausquin and Figalli for unbounded
supports gives smoothness.  The statement used in
Lemma~\ref{lem:transport-regularity} follows directly
from \cite[Theorem 2.2]{KolesnikovContractions} and
\cite[Remark 4 and Corollary 5]{CorderoFigalliUnbounded}.

Throughout this section, assume that $V_0,V_1$ are smooth and that
$\d\mu_i=e^{-V_i}\,\d x$ with $\Hess{V_i}\succeq\kappa_iI$.  Assume also
that the Brenier map $T=\nabla\Phi$ from $\mu_0$ to $\mu_1$ is a smooth
diffeomorphism.  Finally, assume that $H=\Hess{\Phi}$ and $H^{-1}$ are
bounded in global operator norm.  Equivalently,
$\lambda I\preceq H\preceq\Lambda I$ for some
$0<\lambda\leq\Lambda<\infty$.  These are temporary hypotheses for the
smooth argument and will be obtained from the regularization in
Section~\ref{sec:approximation}.
For symmetric matrices, $A\preceq B$ means that $B-A$ is positive
semidefinite, and $\norm{A}_{\HS}$ denotes the Hilbert--Schmidt norm.

For $\d\mu=e^{-V}\,\d x$, the weighted divergence with respect to
$\mu$ is defined through integration by parts and is given by 
\begin{equation}\label{eq:weighted-divergence-definition}
 \diver_\mu X=\diver X-\ip{\nabla V}{X}.
\end{equation}
We use the convention $(DX)_{ij}=\partial_jX_i$.

\begin{lemma}[Weighted vector Bochner identity]\label{lem:vector-bochner}
For every $X\in C_c^\infty(\R^n;\R^n)$,
\begin{equation}\label{eq:vector-bochner}
 \int(\diver_\mu X)^2\,\d\mu
 =
 \int\left\{\tr((DX)^2)+\Hess{V}[X,X]\right\}\,\d\mu.
\end{equation}
\end{lemma}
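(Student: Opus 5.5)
The plan is to prove \eqref{eq:vector-bochner} by integrating by parts twice against the weight $e^{-V}$. Throughout, $X$ has compact support and $V$ is smooth, as in the standing assumptions of this section, so no boundary terms arise. The starting point is the defining duality of the weighted divergence,
\[
\int \ip{\nabla \varphi}{X}\,\d\mu=-\int \varphi\,\diver_\mu X\,\d\mu,
\qquad \varphi\in C^\infty(\R^n),
\]
which follows from \eqref{eq:weighted-divergence-definition} and the ordinary divergence theorem applied to the compactly supported field $\varphi X e^{-V}$.

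First, apply this duality with $\varphi=\diver_\mu X$. This gives
\[
\int(\diver_\mu X)^2\,\d\mu=-\int\ip{\nabla(\diver_\mu X)}{X}\,\d\mu .
\]
Next, expand the gradient in coordinates, with summation over repeated indices:
\[
\partial_i(\diver_\mu X)=\partial_i\partial_jX_j-(\partial_i\partial_jV)X_j-\partial_jV\,\partial_iX_j .
\]
Pairing with $X_i$, the middle term yields $-\Hess V[X,X]$. So the right-hand side becomes
\[
\int \Hess V[X,X]\,\d\mu-\int X_i\,\partial_j(\partial_iX_j)\,\d\mu+\int X_i\,\partial_jV\,\partial_iX_j\,\d\mu ,
\]
where we used $\partial_i\partial_jX_j=\partial_j(\partial_iX_j)$.

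For the second integral, integrate by parts in $x_j$ against $e^{-V}$. This gives
\[
-\int X_i\,\partial_j(\partial_iX_j)\,e^{-V}\,\d x=\int \partial_jX_i\,\partial_iX_j\,\d\mu-\int X_i\,\partial_iX_j\,\partial_jV\,\d\mu .
\]
The last term exactly cancels the third integral above. What remains is $\partial_jX_i\,\partial_iX_j=\tr((DX)^2)$ under the convention $(DX)_{ij}=\partial_jX_i$. This proves \eqref{eq:vector-bochner}.

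There is no real obstacle here. The only points needing care are the index bookkeeping, which makes the drift terms cancel and leaves $\tr((DX)^2)$ rather than $\norm{DX}_{\HS}^2$, and the smoothness of $V$ needed to justify differentiating $\diver_\mu X$.
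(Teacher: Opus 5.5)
Your proof is correct and follows the paper's argument essentially step for step. Both proofs apply the weighted integration by parts with $\diver_\mu X$ as the test function, expand $\nabla(\diver_\mu X)$, and integrate by parts a second time so that the drift term cancels the $\nabla V$ term, leaving $\tr((DX)^2)$; the paper only swaps the roles of the indices $i$ and $j$.
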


\begin{proof}
The weighted integration by parts yields 
\begin{equation}
	 \int(\diver_\mu X)^2\,\d\mu
	=-\int X_j\partial_j(\diver_\mu X)\,\d\mu,
\end{equation}
where repeated indices are summed.  Expanding the derivative then implies
\begin{equation}\label{eq:vector-bochner-expansion}
	 \begin{split}
		\int(\diver_\mu X)^2\,\d\mu
		&=-\int X_j\partial_i\partial_jX_i\,\d\mu
		+\int V_{ij}X_iX_j\,\d\mu
		+\int V_iX_j\partial_jX_i\,\d\mu.
	\end{split}
\end{equation}
Integrating the first term by parts in the $i$-th coordinate implies 
\begin{equation}\label{eq:vector-bochner-integration-by-parts}
	-\int X_j\partial_i(\partial_jX_i)\,\d\mu
	=\int(\partial_iX_j-V_iX_j)\partial_jX_i\,\d\mu.
\end{equation}
Combining \eqref{eq:vector-bochner-expansion} and
\eqref{eq:vector-bochner-integration-by-parts} then yields 
\eqref{eq:vector-bochner}.
\end{proof}

\begin{remark}
	The first term in \eqref{eq:vector-bochner} is not, in general, the
	Hilbert--Schmidt norm of $DX$:
	\[
	\tr((DX)^2)
	=\sum_{i,j}\partial_jX_i\,\partial_iX_j,
	\qquad
	\norm{DX}_{\HS}^2
	=\sum_{i,j}(\partial_jX_i)^2.
	\]
	The two expressions agree when $DX$ is symmetric, but
	$\tr((DX)^2)$ can have either sign for a general vector field.  Thus
	\eqref{eq:vector-bochner} cannot be applied directly at an intermediate
	time: the potential of $\mu_t$ need not have a useful Hessian lower
	bound, and the trace term has no sign for a general vector field with
	the prescribed divergence.
		Proposition~\ref{prop:poisson} instead works at the endpoints, where the curvature
		bounds are available, and uses the symmetry of $H DX$, with
		$H=\Hess{\Phi}$, to recover
	the nonnegativity of the trace terms.
\end{remark}

\begin{definition}[Smooth Bochner couple]
\label{def:bochner-couple}
Let $T=\nabla\Phi$ transport $\mu_0$ to $\mu_1$ and set
$H=\Hess{\Phi}$. Given a smooth mean-zero datum $g$ and constants
$a_0,a_1\geq0$, a pair of smooth
vector fields $(X,Y)$ is called a \emph{Bochner couple} with datum $g$
and constants $(a_0,a_1)$ if the relations
\begin{equation}\label{eq:bochner-couple}
 Y\circ T=HX,
 \qquad
 -\diver_{\mu_0}X=g,
 \qquad
 -(\diver_{\mu_1}Y)\circ T=g,
\end{equation}
hold, the two matrix symmetry conditions
\begin{equation}\label{eq:bochner-couple-symmetry}
 H DX\ \text{is symmetric},
 \qquad
 ((DY)\circ T)H\ \text{is symmetric}
\end{equation}
hold, and the endpoint Bochner estimates
\begin{equation}\label{eq:bochner-couple-bounds}
 \norm{X}_{L^2(\mu_0)}\leq a_0\norm{g}_{L^2(\mu_0)},
 \qquad
 \norm{Y}_{L^2(\mu_1)}\leq a_1\norm{g}_{L^2(\mu_0)}
\end{equation}
are valid.  If there is a scalar function $u$ such that
\begin{equation}\label{eq:poisson-generated-couple}
 X=H^{-1}\nabla u,
 \qquad
 Y\circ T=\nabla u
\end{equation}
then we say that the couple is generated by the weighted Poisson
equation. Here $H=DT=\Hess{\Phi}$ is the differential of the Brenier map
that relates the two fields.
\end{definition}

We call constructing a Bochner couple from common data and then
interpolating its two fields \emph{Bochner coupling}. 

\begin{proposition}[Endpoint estimates for the weighted Poisson equation]
\label{prop:poisson}
For every $g\in L^2(\mu_0)$ with $\int g\,\d\mu_0=0$, there exists a unique
$u\in H^1(\mu_0)$ satisfying $\int u\,\d\mu_0=0$ and 
\begin{equation}\label{eq:poisson}
 -\diver_{\mu_0}(H^{-1}\nabla u)=g
\end{equation}
in the weak sense. Setting $X=H^{-1}\nabla u$ and defining $Y$ by
$Y\circ T=\nabla u$, one also has
\begin{equation}
	-(\diver_{\mu_1}Y)\circ T=g 
\end{equation}
in the weak sense, together with the estimates 
\begin{equation}\label{eq:endpoint-bounds}
\norm{X}_{L^2(\mu_0)}
 \leq\kappa_0^{-1/2}\norm{g}_{L^2(\mu_0)},\qquad
 \norm{Y}_{L^2(\mu_1)}=\norm{\nabla u}_{L^2(\mu_0)}
 \leq\kappa_1^{-1/2}\norm{g}_{L^2(\mu_0)}.
\end{equation}
\end{proposition}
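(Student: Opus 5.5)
The plan has three parts: existence by Lax--Milgram, the target divergence identity by a change of variables, and the two endpoint estimates from Lemma~\ref{lem:vector-bochner} via the two matrix symmetries. Throughout I use the temporary hypotheses of this section: $\lambda I\preceq H\preceq\Lambda I$, $T$ a smooth diffeomorphism, $\Hess V_i\succeq\kappa_i I$.

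\emph{Existence and uniqueness.} On $\mathcal H=\{u\in H^1(\mu_0):\int u\,\d\mu_0=0\}$ consider
\[
a(u,v)=\int\ip{H^{-1}\nabla u}{\nabla v}\,\d\mu_0 .
\]
This form is bounded since $H^{-1}\preceq\lambda^{-1}I$. It is coercive because $H^{-1}\succeq\Lambda^{-1}I$, and because the Brascamp--Lieb bound $C_P(\mu_0)\le\kappa_0^{-1}$ makes $\norm{\nabla u}_{L^2(\mu_0)}$ an equivalent norm on $\mathcal H$. The functional $v\mapsto\int gv\,\d\mu_0$ is bounded on $\mathcal H$ by Poincar\'e. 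It also vanishes on constants since $\int g\,\d\mu_0=0$, so testing against all of $H^1(\mu_0)$ is equivalent. Lax--Milgram then gives a unique weak solution of \eqref{eq:poisson}.

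\emph{Target divergence identity.} For $\varphi\in C_c^\infty$, set $\psi=\varphi\circ T$. Then $\nabla\psi=H(\nabla\varphi\circ T)$, so $\nabla\varphi\circ T=H^{-1}\nabla\psi$. Using $T_\#\mu_0=\mu_1$,
\[
\int\ip{Y}{\nabla\varphi}\,\d\mu_1=\int\ip{\nabla u}{H^{-1}\nabla\psi}\,\d\mu_0=\int g\,\psi\,\d\mu_0 .
\]
This is the weak form of $-(\diver_{\mu_1}Y)\circ T=g$; the bi-Lipschitz bounds ensure $\psi$ is an admissible test function. The identity $\norm{Y}_{L^2(\mu_1)}=\norm{\nabla u}_{L^2(\mu_0)}$ is just the change of variables.

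\emph{Symmetries and endpoint estimates.}
\begin{itemize}
\item Source symmetry. Differentiating $HX=\nabla u$ gives $H\,DX+\Phi_{ijk}X_k=\Hess u$. The middle term is symmetric in $(i,j)$, so $S:=H\,DX$ is symmetric.
\item Source trace. Then $DX=H^{-1}S$ is conjugate to $H^{-1/2}SH^{-1/2}$, and
\[
\tr((DX)^2)=\norm{H^{-1/2}SH^{-1/2}}_{\HS}^2\ge\frac{\lambda}{\Lambda}\norm{DX}_{\HS}^2\ge0 .
\]
\item Target symmetry and trace. The chain rule gives $((DY)\circ T)H=\Hess u$. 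Hence $(DY)\circ T=(\Hess u)H^{-1}$ is conjugate to $H^{-1/2}(\Hess u)H^{-1/2}$, and the same two-sided bound holds for $\tr((DY)^2)$.
\item Conclusion. Lemma~\ref{lem:vector-bochner} with $\Hess V_0\succeq\kappa_0I$ then formally gives $\kappa_0\norm{X}^2_{L^2(\mu_0)}\le\norm{\diver_{\mu_0}X}^2_{L^2(\mu_0)}=\norm g^2_{L^2(\mu_0)}$. On $\mu_1$ it gives $\kappa_1\norm{Y}^2_{L^2(\mu_1)}\le\int(\diver_{\mu_1}Y)^2\,\d\mu_1=\norm g^2_{L^2(\mu_0)}$, the last equality again by change of variables.
\end{itemize}

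\emph{Main obstacle: justification.} $X$ and $Y$ are not compactly supported, and $u$ is only a weak solution. I would first take $g$ smooth and compactly supported; by uniform ellipticity and smooth coefficients, $u$ is then smooth by interior elliptic regularity. The general case follows because $g\mapsto(X,Y)$ is linear, and the final bounds then extend by density and weak lower semicontinuity.

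For the Bochner step, apply \eqref{eq:vector-bochner} to $\chi_RX$ with a cutoff $\chi_R$ satisfying $|\nabla\chi_R|\le C/R$. Since
\[
D(\chi_RX)=\chi_R\,DX+X\otimes\nabla\chi_R ,
\]
the trace term equals $\chi_R^2\tr((DX)^2)$ plus a cross term $2\chi_R\ip{\nabla\chi_R}{DX\,X}$ and $\ip{X}{\nabla\chi_R}^2$. The divergence term produces analogous errors. The quantitative bound $\tr((DX)^2)\ge(\lambda/\Lambda)\norm{DX}_{\HS}^2$ lets me absorb the cross term by Young's inequality into the good trace term. What remains is an error of order $R^{-2}\int|X|^2\,\d\mu_0\to0$. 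This yields local $L^2$ control of $DX$ and, letting $R\to\infty$, the inequality
\[
\int\Hess V_0[X,X]\,\d\mu_0\le\int(\diver_{\mu_0}X)^2\,\d\mu_0 .
\]
The same argument applies verbatim to $Y$ on $\mu_1$. I expect this truncation and absorption step to be the only delicate point.
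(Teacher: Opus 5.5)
Your proposal is correct, but it transfers the endpoint bounds to the weak solution by a different route than the paper.

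The paper never applies the Bochner identity to a field without compact support. It proves the two estimates only for $u\in C_c^\infty(\R^n)$. There $X=H^{-1}\nabla u$ and $Y=(\nabla u)\circ T^{-1}$ are compactly supported, so only the sign $\tr((DX)^2)\geq0$ is needed. It then shows that the range of $B=-\diver_{\mu_0}(H^{-1}\nabla\,\cdot\,)$ on $C_c^\infty(\R^n)$ is dense in $L_0^2(\mu_0)$. An element orthogonal to the range is smooth by elliptic regularity and constant by a Caccioppoli estimate. Finally, it passes to the limit along a Cauchy sequence $u_k$ and identifies the limit with the Lax--Milgram solution.

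You work directly with the Lax--Milgram solution. For smooth, compactly supported, centered data, interior regularity makes $u$ smooth. You handle the non-compact support of $X$ and $Y$ with a cutoff, absorbing the cross term $2\chi_R\ip{\nabla\chi_R}{DX\,X}$ via the quantitative bound $\tr((DX)^2)\geq(\lambda/\Lambda)\norm{DX}_{\HS}^2$ and its target analogue. That bound is valid because $DX=H^{-1/2}MH^{1/2}$ with $M=H^{-1/2}(H\,DX)H^{-1/2}$ symmetric. The general case then follows from continuity of the Lax--Milgram solution map $g\mapsto u$. This gives strong convergence of $X$ in $L^2(\mu_0)$ and $Y$ in $L^2(\mu_1)$, so lower semicontinuity is not even needed.

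Your route makes the density step trivial: you only need density of centered smooth compactly supported data, not of the range of $B$. It also yields, as a by-product, a global bound on $\int\norm{DX}_{\HS}^2\,\d\mu_0$ (and on the corresponding quantity for $DY$) in terms of $\norm{g}_{L^2(\mu_0)}^2$. The price is that the two-sided bounds on $H$ enter pointwise in the absorption step. The paper needs only the sign of the trace terms and uses the bounds on $H$ just for Lax--Milgram and the Caccioppoli step.
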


\begin{proof}
\medskip\noindent\textbf{Step 1: Endpoint identities and Bochner estimates.}
We begin with $u\in C_c^\infty(\R^n)$.  Set  
\begin{equation*}
 X=H^{-1}\nabla u,\qquad
 Bu=-\diver_{\mu_0}X,\qquad
 Y(Tx)=\nabla u(x)=H(x)X(x).
\end{equation*}
Both fields are compactly supported, and we will verify that $(X,Y)$ is
a Bochner couple with datum $Bu$ and constants
$(\kappa_0^{-1/2},\kappa_1^{-1/2})$.
We first derive the endpoint estimates (the a priori estimates) with
$g=Bu$; the extension to an arbitrary
mean-zero $g\in L^2(\mu_0)$ is carried out at the end of the proof by a
density argument. For every $\psi\in C_c^\infty(\R^n)$,
\begin{equation}\label{eq:source-target-pairing}
 \int\ip{\nabla\psi}{Y}\,\d\mu_1=\int \ip{\nabla\psi(Tx)}{H(x)X(x)}\,\d\mu_0(x)
 =\int\ip{\nabla(\psi\circ T)}{X}\,\d\mu_0, 
\end{equation}
which implies 
\begin{equation}\label{eq:source-target-divergence}
 (\diver_{\mu_1}Y)\circ T=\diver_{\mu_0}X=-Bu
\end{equation}
in the weak sense.  This transformation rule for weighted divergence
under the Brenier map appears in
\cite[Lemma~2.1 and (2.5)]{KolesnikovHessian}.
Let $P=DX$, so that $P_{kj}=\partial_jX_k$. Since
$H_{ik}=(\Hess{\Phi})_{ik}=\partial_i\partial_k\Phi$,
differentiating $(HX)_i=\partial_i u$ with respect to $x_j$ yields 
\begin{equation}
	(\Hess{u})_{ij}
	=(\partial_jH_{ik})X_k+H_{ik}P_{kj}
	=\Phi_{ikj}X_k+H_{ik}P_{kj}, 
\end{equation}
and hence
\begin{equation}\label{eq:source-Hessian-decomposition}
 \Hess{u}=C_X+HP,\qquad (C_X)_{ij}=\Phi_{ijk}X_k.
\end{equation}

The smoothness of $\Phi$ implies that $C_X$ is symmetric. Since
$\Hess{u}$ is symmetric as well,
\eqref{eq:source-Hessian-decomposition} shows that
$HP=\Hess{u}-C_X$ is symmetric. The matrix
\[
 H^{-1/2}(HP)H^{-1/2}=H^{1/2}PH^{-1/2}
\]
is symmetric and similar to $P$. Hence,
\begin{equation}\label{eq:source-trace-positive}
 \tr(P^2)
 =\norm{H^{-1/2}(HP)H^{-1/2}}_{\HS}^2\geq0.
\end{equation}
Applying Lemma~\ref{lem:vector-bochner} with $\mu_0$ and using
\eqref{eq:source-trace-positive}, we obtain 
\begin{equation}\label{eq:source-bound-smooth}
\norm{X}_{L^2(\mu_0)}=\norm{H^{-1}\nabla u}_{L^2(\mu_0)}
 \leq\kappa_0^{-1/2}\norm{Bu}_{L^2(\mu_0)}=\kappa_0^{-1/2}\norm{g}_{L^2(\mu_0)}. 
\end{equation}

At the target, let $Q(x)=(D_yY)(Tx)$. Since 
$Y(Tx)=\nabla u(x)$, we have $QH=\Hess{u}$. Thus $Q$ is similar to the
symmetric matrix $H^{-1/2}\Hess{u}H^{-1/2}$, and hence
\begin{equation}\label{eq:target-trace-positive}
 \tr(Q^2)
 =\norm{H^{-1/2}\Hess{u}H^{-1/2}}_{\HS}^2\geq0.
\end{equation}
Applying Lemma~\ref{lem:vector-bochner} with $\mu_1$, together with
\eqref{eq:source-target-divergence} and
\eqref{eq:target-trace-positive}, we obtain 
\begin{equation}\label{eq:target-bound-smooth}
  \norm{Y}_{L^2(\mu_1)}=\norm{\nabla u}_{L^2(\mu_0)}
 \leq\kappa_1^{-1/2}\norm{Bu}_{L^2(\mu_0)}.
\end{equation}
Thus the two bounds are the endpoint estimates of a Bochner couple with constants
$(\kappa_0^{-1/2},\kappa_1^{-1/2})$.

\medskip\noindent\textbf{Step 2: Weak solvability and range density.}
The existence and uniqueness of a centered weak solution of
\eqref{eq:poisson} already follow from the Lax--Milgram theorem
\cite[Corollary 5.8]{BrezisFunctional}, applied on the mean-zero
subspace of $H^1(\mu_0)$ to the bilinear form
\[
 (v,w)\longmapsto
 \int\ip{H^{-1}\nabla v}{\nabla w}\,\d\mu_0.
\]
Here boundedness and coercivity follow from the global bounds on $H$
and $H^{-1}$ and the Poincar\'e inequality for $\mu_0$.  To ensure that
this solution also inherits \eqref{eq:source-bound-smooth} and
\eqref{eq:target-bound-smooth}, it remains to prove that the range of
the operator below is dense.

Let $B=-\diver_{\mu_0}(H^{-1}\nabla)$ on
$C_c^\infty(\R^n)$.  Its range is dense in
\[
 L_0^2(\mu_0)
 =\left\{h\in L^2(\mu_0):\int h\,\d\mu_0=0\right\}.
\]
Indeed, if $h\in L_0^2(\mu_0)$ is orthogonal to the range and
$a=e^{-V_0}H^{-1}$, then
\begin{equation}
	 -\int h\diver(a\nabla\varphi)\,\d x=0,
	\qquad \varphi\in C_c^\infty(\R^n). 
\end{equation}
The operator $-\diver(a\nabla)$ is smooth and locally uniformly
elliptic, so standard interior elliptic regularity
\cite{HormanderInterior} yields $h\in H^1_{\mathrm{loc}}$ (indeed,
$h\in C^\infty$). Choosing a test function $\eta_R^2h$, where
$\eta_R=1$ on $B_R$, $\operatorname{supp}\eta_R\subseteq B_{2R}$, and
$|\nabla\eta_R|\leq C/R$, gives
\begin{equation}
	\begin{aligned}
		 \int\eta_R^2
		\ip{H^{-1}\nabla h}{\nabla h}\,\d\mu_0&=-2 \int\eta_Rh
		\ip{H^{-1}\nabla h}{\nabla\eta_R}\,\d\mu_0\\
		&\leq
		\frac12\int\eta_R^2
		\ip{H^{-1}\nabla h}{\nabla h}\,\d\mu_0
		+2\int h^2
		\ip{H^{-1}\nabla\eta_R}{\nabla\eta_R}\,\d\mu_0. 
	\end{aligned}
\end{equation}
Combining with $H^{-1}\preceq\lambda^{-1}I$, we obtain 
\begin{equation}
	\int\eta_R^2
	\ip{H^{-1}\nabla h}{\nabla h}\,\d\mu_0
	\leq
	\frac{4C^2}{\lambda R^2}
	\norm{h}_{L^2(\mu_0)}^2. 
\end{equation}
Letting $R\to\infty$ on each fixed ball yields $\nabla h=0$; the
zero-mean condition then implies $h=0$. Thus the range is dense.

\medskip\noindent\textbf{Step 3: Approximation and passage to the limit.}
Choose $u_k^0\in C_c^\infty(\R^n)$ such that
$Bu_k^0\to g$ in $L^2(\mu_0)$, and set
$u_k=u_k^0-\int u_k^0\,\d\mu_0$.
Then $u_k$ is centered, while $\nabla u_k=\nabla u_k^0$ and 
$Bu_k=Bu_k^0$. Applying \eqref{eq:source-bound-smooth} and
\eqref{eq:target-bound-smooth} to
$u_k^0-u_\ell^0\in C_c^\infty(\R^n)$ shows that
$\nabla u_k$ and $H^{-1}\nabla u_k$ are Cauchy in
$L^2(\mu_0)$. Moreover, since each $u_k$ is centered, the endpoint Poincar\'e inequality and
the Cauchy property of $(\nabla u_k)$ show that $(u_k)$ is Cauchy in
$H^1(\mu_0)$. It follows that $u_k\to u$ in $H^1(\mu_0)$ for some centered
$u\in H^1(\mu_0)$.
For every $\varphi\in C_c^\infty(\R^n)$, 
\begin{equation}
	\begin{aligned}
		\int\ip{H^{-1}\nabla u}{\nabla\varphi}\,\d\mu_0=
		\lim_{k\to\infty}
		\int\ip{H^{-1}\nabla u_k}{\nabla\varphi}\,\d\mu_0=
		\lim_{k\to\infty}
		\int(Bu_k)\varphi\,\d\mu_0
		=
		\int g\varphi\,\d\mu_0.
	\end{aligned}
\end{equation}
Thus $u$ is a centered weak solution of \eqref{eq:poisson}. By the density of $C_c^\infty(\R^n)$ in $H^1(\mu_0)$, the weak
identity extends to all $H^1(\mu_0)$ test functions. Hence $u$
coincides with the unique solution provided by the Lax--Milgram theorem. Define $Y_k$ by
$Y_k\circ T=\nabla u_k$. 
Since $u_k\to u$ in $H^1(\mu_0)$, we have
$Y_k\to Y$ in $L^2(\mu_1)$. Applying
\eqref{eq:source-target-pairing} to $u_k^0$ and then passing to the limit, we obtain
\begin{equation}
\int\ip{\nabla\psi}{Y}\,\d\mu_1
=
\int g\,\psi\circ T\,\d\mu_0, 
	\qquad \psi\in C_c^\infty(\R^n), 
\end{equation}
which implies $-(\diver_{\mu_1}Y)\circ T=g$ in the weak sense.  Passing to
the limit in \eqref{eq:source-bound-smooth} and
\eqref{eq:target-bound-smooth} yields \eqref{eq:endpoint-bounds}.
\end{proof}

At the smooth level, the same Poisson solution $u$ gives both matrix
identities.  The matrix $HDX$ is symmetric at the source, and
$((D_yY)\circ T)H=\Hess{u}$ is symmetric at the target.  These facts
give the sharp endpoint estimates.  They do not require a curvature
bound for any intermediate measure.  Together with
\eqref{eq:source-target-divergence}, these estimates provide the input
for the affine interpolation in Proposition~\ref{prop:smooth}.

\begin{proposition}[Poincar\'e estimate in the smooth case]
	\label{prop:smooth}
	Assume that $V_i\in C^\infty(\R^n)$ and
	$\Hess{V_i}\succeq\kappa_iI$ for $i=0,1$. Suppose that the Brenier map
	$T=\nabla\Phi$ from $\mu_0$ to $\mu_1$ is a smooth diffeomorphism and
	that $\lambda I\preceq\Hess{\Phi}\preceq\Lambda I$ for some $0<\lambda\leq\Lambda<\infty$. Then \eqref{eq:main} holds for
	every $t\in[0,1]$.
\end{proposition}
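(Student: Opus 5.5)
The plan is to follow Step~3 of the proof strategy, using Proposition~\ref{prop:poisson} as the sole analytic input. The endpoint cases follow from Brascamp--Lieb, but the argument below also covers them. Fix $t\in[0,1]$ and $f\in C_c^\infty(\R^n)$, and set $F_t=(1-t)I+tT$. Define $g=\bigl(f-\int f\,\d\mu_t\bigr)\circ F_t$. Since $\mu_t=(F_t)_\#\mu_0$, the function $g$ is bounded and mean-zero in $L^2(\mu_0)$, and
\[
\norm{g}_{L^2(\mu_0)}^2=\Var_{\mu_t}(f).
\]
Proposition~\ref{prop:poisson} then supplies $u$, $X=H^{-1}\nabla u$ and $Y\circ T=\nabla u$ satisfying the weak divergence identities and the bounds \eqref{eq:endpoint-bounds}.

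Next we transfer the divergence identity to time $t$ by duality, without ever defining $q_t$ pointwise. We need $F_t$ to be a bijection, which holds because $DF_t=(1-t)I+tH\succeq((1-t)+t\lambda)I$ is uniformly positive. Hence $F_t=\nabla\bigl((1-t)|x|^2/2+t\Phi\bigr)$ is the gradient of a uniformly convex smooth function, and so a diffeomorphism of $\R^n$. We then define $q_t$ by
\[
q_t\circ F_t=(1-t)X+t\,(Y\circ T).
\]
For $\psi=f-\int f\,\d\mu_t$, or any $\psi\in C_c^\infty$ plus a constant, we compute
\[
\int\ip{\nabla\psi}{q_t}\,\d\mu_t=(1-t)\int\ip{(\nabla\psi)\circ F_t}{X}\,\d\mu_0+t\int\ip{(\nabla\psi)\circ F_t}{HX}\,\d\mu_0 .
\]
The combined integrand is $\ip{(DF_t)^{\top}(\nabla\psi)\circ F_t}{X}=\ip{\nabla(\psi\circ F_t)}{X}$, because $DF_t$ is symmetric. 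The weak Poisson equation \eqref{eq:poisson} then gives
\[
\int\ip{\nabla\psi}{q_t}\,\d\mu_t=\int g\,(\psi\circ F_t)\,\d\mu_0 .
\]
One subtlety is that $\psi\circ F_t$ must be an admissible test function in $H^1(\mu_0)$. This holds because $\psi\circ F_t$ is smooth with gradient $DF_t(\nabla\psi)\circ F_t$, which is bounded and compactly supported since $F_t$ is proper, $DF_t\preceq((1-t)+t\Lambda)I$, and constants are harmless because $\int\ip{H^{-1}\nabla u}{\nabla 1}=0$. Taking $\psi=f-\int f\,\d\mu_t$ yields
\[
\Var_{\mu_t}(f)=\norm{g}_{L^2(\mu_0)}^2=\int\ip{\nabla f}{q_t}\,\d\mu_t .
\]

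Finally we estimate $q_t$ with Minkowski's inequality in $L^2(\mu_0)$ and the change of variables, using $\norm{Y\circ T}_{L^2(\mu_0)}=\norm{Y}_{L^2(\mu_1)}$:
\[
\norm{q_t}_{L^2(\mu_t)}=\norm{(1-t)X+t\,Y\circ T}_{L^2(\mu_0)}\le\Bigl(\tfrac{1-t}{\sqrt{\kappa_0}}+\tfrac{t}{\sqrt{\kappa_1}}\Bigr)\norm{g}_{L^2(\mu_0)}.
\]
Cauchy--Schwarz then gives
\[
\norm{g}^2\le\norm{\nabla f}_{L^2(\mu_t)}\Bigl(\tfrac{1-t}{\sqrt{\kappa_0}}+\tfrac{t}{\sqrt{\kappa_1}}\Bigr)\norm{g},
\]
and dividing by $\norm{g}$ (the case $g=0$ being trivial) and squaring gives \eqref{eq:main}.

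The main point requiring care is the duality step. One must check that $F_t$ is a global diffeomorphism, so that $q_t$ is well defined $\mu_t$-a.e. One must also check that the weak formulations of Proposition~\ref{prop:poisson}, stated for $C_c^\infty$ test functions, extend to $\psi\circ F_t$. The target identity is not needed here, since both terms reduce to the source weak equation via $HX=\nabla u$. The extension follows from the density of $C_c^\infty$ in $H^1(\mu_0)$, as already noted in Step~3 of that proof, together with the uniform bounds on $H$.
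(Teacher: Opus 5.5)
Your proposal is correct and follows the paper's proof: the same pulled-back datum $g=h\circ F_t$, the same field $q_t\circ F_t=((1-t)I+tH)X$, the duality identity obtained from symmetry of $DF_t$ and the source weak equation, and the Minkowski plus Cauchy--Schwarz finish. One small simplification: for $\psi\in C_c^\infty$, properness of $F_t$ already gives $\psi\circ F_t\in C_c^\infty$, so the density extension you invoke at the end is not needed.
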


\begin{proof}
Fix $f\in C_c^\infty(\R^n)$ and let $h=f-\int f\,\d\mu_t$.  Set
\begin{equation*}
 A_t=DF_t=(1-t)I+tH,\qquad
 \alpha_t=(1-t)+t\lambda,\qquad
 g=h\circ F_t.
\end{equation*}
Since $(F_t)_\#\mu_0=\mu_t$, we have
$g\in L_0^2(\mu_0)$ and
$\norm{g}_{L^2(\mu_0)}=\norm{h}_{L^2(\mu_t)}$.
 The uniform lower bound
$A_t\succeq\alpha_tI$ yields 
\begin{equation}\label{eq:strong-monotonicity-Ft}
	 \bigl(F_t(x)-F_t(y)\bigr)\cdot(x-y)
	\geq\alpha_t|x-y|^2,
\end{equation}
which implies that $F_t$ is injective and proper. The local inverse
function theorem makes its image open, while properness makes the image
closed; hence $F_t$ is a global diffeomorphism.

Apply Proposition~\ref{prop:poisson} to
$g=h\circ F_t$ and denote $X=H^{-1}\nabla u$. Define $q$ at time $t$ by
\begin{equation*}
 q(F_t(x))=A_t(x)X(x)
       =(1-t)X(x)+t\nabla u(x).
\end{equation*}

For $\psi\in C_c^\infty(\R^n)$, properness of $F_t$ ensures that
$\psi\circ F_t$ is compactly supported.  Using
$(F_t)_\#\mu_0=\mu_t$, the definition $q\circ F_t=DF_t\,X$, and the
chain rule, we obtain 
\begin{equation}\label{eq:intermediate-divergence-identity}
	\begin{aligned}
		 \int q\cdot\nabla\psi\,\d\mu_t
		&=\int q(F_t(x))\cdot\nabla\psi(F_t(x))\,\d\mu_0(x)\\
		&=\int X\cdot\nabla(\psi\circ F_t)\,\d\mu_0\\
		&\overset{*}{=}\int g\,\psi\circ F_t\,\d\mu_0
		=\int h\psi\,\d\mu_t, 
	\end{aligned}
\end{equation}
where $(*)$ follows from the weak formulation of
$-\diver_{\mu_0}X=g$. Hence
$-\diver_{\mu_t}q=h$ in the weak sense. Moreover,
Proposition~\ref{prop:poisson} gives
\begin{equation}\label{eq:interpolated-lift-bound}
	 \norm{q}_{L^2(\mu_t)}
	\leq(1-t)\norm{X}_{L^2(\mu_0)}
	+t\norm{\nabla u}_{L^2(\mu_0)}
	\leq\left(\frac{1-t}{\sqrt{\kappa_0}}
	+\frac{t}{\sqrt{\kappa_1}}\right)
	\norm{h}_{L^2(\mu_t)}.
\end{equation}

Since $h=f-\int f\,\d\mu_t$, taking $\psi=f$ in \eqref{eq:intermediate-divergence-identity}, we obtain 
\begin{equation}\label{eq:smooth-poincare-estimate}
 \Var_{\mu_t}(f)
 =\int q\cdot\nabla f\,\d\mu_t
 \leq\left(\frac{1-t}{\sqrt{\kappa_0}}
            +\frac{t}{\sqrt{\kappa_1}}\right)
       \sqrt{\Var_{\mu_t}(f)}
       \left(\int|\nabla f|^2\,\d\mu_t\right)^{1/2},
\end{equation}
which proves the proposition.
\end{proof}

\section{Regularization and transport stability}\label{sec:approximation}
To pass to nonsmooth endpoint potentials, we need a smooth
approximation that preserves their curvature lower bounds.  Gaussian
convolution alone weakens these bounds, so we combine it with a
compensating dilation.  The covariance estimate needed to verify this
regularization is the following linear form of the Brascamp--Lieb
inequality.

\begin{lemma}\label{lem:linear-BL}
Let $U:\R^n\to(-\infty,+\infty]$ be proper and lower semicontinuous,
with $U-\lambda|x|^2/2$ convex for some $\lambda>0$.  If
$0<\int e^{-U}\,\d x<\infty$, $\d\nu\propto e^{-U}\,\d x$, and $\zeta$ has law
$\nu$, then
\begin{equation}
	 \Cov(\zeta)\preceq\lambda^{-1}I. 
\end{equation}
\end{lemma}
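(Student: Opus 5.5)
The plan is to reduce to the scalar Poincar\'e inequality applied to linear test functions, with a truncation to handle the fact that linear functions are not compactly supported. First I will record the basic facts about $\nu$. Since $U-\lambda|x|^2/2$ is convex, proper and lower semicontinuous, it is bounded below by an affine function. Hence $e^{-U}\leq Ce^{-\lambda|x|^2/4}$ outside a compact set, so $\nu$ has moments of all orders and $\Cov(\zeta)$ is well defined. The domain $\{U<\infty\}$ is convex, and since $0<\int e^{-U}\,\d x$, it has nonempty interior.

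It suffices to show $\Var_\nu(\ip{e}{x})\leq\lambda^{-1}$ for every unit vector $e$, because $\ip{\Cov(\zeta)e}{e}=\Var_\nu(\ip{e}{x})$. The introduction already quotes the Brascamp--Lieb bound $C_P(\nu)\leq\lambda^{-1}$ for $\lambda$-strongly log-concave measures with extended-valued potentials. If that result is accepted in the stated generality, I will apply it to $f_R=\eta_R\,\ip{e}{x}$ with a smooth cutoff $\eta_R$. Then $\nabla f_R=\eta_R e+\ip{e}{x}\nabla\eta_R$, and the second term is bounded by $C|x|/R\cdot\mathbf 1_{\{R\le|x|\le2R\}}$. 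By dominated convergence and the Gaussian tail of $\nu$, $\Var_\nu(f_R)\to\Var_\nu(\ip{e}{x})$ and $\int|\nabla f_R|^2\,\d\nu\to1$, which gives the claim.

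If the Brascamp--Lieb inequality is to be justified only for smooth finite potentials, I will approximate $U$ by smooth strongly convex potentials $U_k$. Concretely, I will write $U=\lambda|x|^2/2+W$ with $W$ convex, replace $W$ by its Moreau--Yosida envelope $W_k$ (finite, convex, $C^{1,1}$, increasing to $W$), and then mollify to get a smooth convex $W_k$. With $U_k=\lambda|x|^2/2+W_k$ we have $\Hess U_k\succeq\lambda I$. The smooth Brascamp--Lieb inequality, or directly the vector Bochner identity of Lemma~\ref{lem:vector-bochner} applied to the Poisson solution for the datum $\ip{e}{x}-\text{mean}$, then gives $\Var_{\nu_k}(\ip{e}{x})\leq\lambda^{-1}$. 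Finally I will pass to the limit.

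The main obstacle is this limit $\nu_k\to\nu$ with convergence of second moments. Monotone convergence gives $e^{-W_k}\downarrow e^{-W}$ pointwise, including on $\{W=\infty\}$ where the limit is $0$, and the boundary of the convex domain is Lebesgue-null. Since $W$ is bounded below by an affine function and $W_k$ dominates a uniform affine minorant, the integrands $|x|^2e^{-U_k}$ are dominated by a fixed Gaussian-type function. Dominated convergence then yields convergence of the normalizing constants, the means and the second moments, so $\Cov(\nu_k)\to\Cov(\nu)$. The bound $\Cov(\nu_k)\preceq\lambda^{-1}I$ passes to the limit.
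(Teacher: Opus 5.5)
Your second route is essentially the paper's proof: write $U=\lambda|x|^2/2+W$, replace $W$ by mollified Moreau envelopes that share a uniform affine minorant, apply the smooth Brascamp--Lieb inequality to linear functions, and pass to the limit by dominated convergence. Your first route presupposes the Brascamp--Lieb inequality for extended-valued potentials, which the paper deliberately avoids assuming, so the second route is the one that proves the lemma. One small correction there: after mollification the sequence need not be monotone in $k$, so either take $\delta\downarrow0$ and then $m\to\infty$ separately as the paper does, or use the a.e.\ convergence that follows from $W_k*\rho_\delta\geq W_k$ (Jensen), continuity of $W$ on the interior of its domain, and the null boundary.
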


\begin{proof}
Write $U=\lambda|x|^2/2+W$, where $W$ is proper, lower semicontinuous,
and convex. Choose an affine minorant $W(x)\geq p\cdot x+b$, and let
\[
 W_m(x)=\inf_{y\in\R^n}\left\{W(y)+\frac{m}{2}|x-y|^2\right\},
 \qquad m\geq1,
\]
be the Moreau envelopes of $W$. They are finite and convex,
$W_m\uparrow W$ pointwise, and
\begin{equation}\label{eq:Moreau-affine-minorant}
 W_m(x)\geq p\cdot x+b-\frac{|p|^2}{2m}.
\end{equation}
Let $\rho$ be a smooth, compactly supported, radial probability density,
write $\rho_\delta(x)=\delta^{-n}\rho(x/\delta)$, and set
$W_{m,\delta}=W_m*\rho_\delta$. Then $W_{m,\delta}$ is smooth and
convex. Because $\int z\rho_\delta(z)\,\d z=0$,
\eqref{eq:Moreau-affine-minorant} also holds with $W_{m,\delta}$ in
place of $W_m$.

The classical Brascamp--Lieb inequality \cite{BrascampLieb}, applied to
the smooth potential $\lambda|x|^2/2+W_{m,\delta}$, gives
\begin{equation}\label{eq:smooth-linear-BL}
 \Var_{\nu_{m,\delta}}(a\cdot x)
 \leq\lambda^{-1}|a|^2,
 \qquad a\in\R^n,
\end{equation}
where $\d\nu_{m,\delta}\propto
e^{-\lambda|x|^2/2-W_{m,\delta}(x)}\,\d x$.
For fixed $m$, the finite convex function $W_m$ is continuous and
$W_{m,\delta}\to W_m$ locally uniformly as $\delta\downarrow0$.
Moreover, \eqref{eq:Moreau-affine-minorant}, uniformly for $m\geq1$ and
$\delta>0$, bounds all the corresponding unnormalized densities by a
constant times $e^{-\lambda|x|^2/2-p\cdot x}$. This function remains
integrable after multiplication by $1+|x|^2$, so dominated convergence
applies to the normalization constants and to the first and second
moments. We may first let $\delta\downarrow0$ in
\eqref{eq:smooth-linear-BL} and then let $m\to\infty$. Since
$W_m\uparrow W$, this yields
\[
 \Var_\nu(a\cdot x)\leq\lambda^{-1}|a|^2,
 \qquad a\in\R^n,
\]
which is equivalent to the stated covariance bound.
\end{proof}

\begin{lemma}\label{lem:regularization}
Let $\kappa>0$, and let $\mu$ have potential $V$ with
$V-\kappa|x|^2/2$ convex. Let $\xi\sim\mu$ and
$G\sim N(0,I)$ be independent.  For $\varepsilon>0$, define
\begin{equation*}
 \xi^\varepsilon
 =\frac{\xi+\sqrt{\varepsilon}\,G}
        {\sqrt{1+\kappa\varepsilon}},
 \qquad \mu^\varepsilon=\operatorname{Law}(\xi^\varepsilon).
\end{equation*}
Then $\mu^\varepsilon\to\mu$ in $W_2$.  The measure
$\mu^\varepsilon$ has a smooth positive density
$e^{-V^\varepsilon}$ whose potential satisfies
\begin{equation}\label{eq:regularized-Hessian}
 \kappa I\preceq \Hess{V^\varepsilon}
 \preceq\frac{1+\kappa\varepsilon}{\varepsilon}I.
\end{equation}
\end{lemma}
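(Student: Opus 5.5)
We handle the three claims in order: $W_2$ convergence, existence of a smooth positive density, and the two-sided Hessian bound \eqref{eq:regularized-Hessian}.

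\emph{Convergence.} Couple $\xi^\varepsilon$ with $\xi$ directly. Then
\[
\xi^\varepsilon-\xi=\bigl((1+\kappa\varepsilon)^{-1/2}-1\bigr)\xi+\sqrt{\varepsilon}(1+\kappa\varepsilon)^{-1/2}G.
\]
Since $\mu$ has finite second moment ($e^{-V}\le Ce^{-\kappa|x|^2/4}$ by strong convexity), $\mathbb E|\xi^\varepsilon-\xi|^2\to0$, and this bounds $W_2^2(\mu^\varepsilon,\mu)$.

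\emph{Density formula.} Let $s=\sqrt{1+\kappa\varepsilon}$. The law of $\xi+\sqrt\varepsilon G$ has density
\[
p_\varepsilon(y)=(2\pi\varepsilon)^{-n/2}\int e^{-V(x)-|y-x|^2/(2\varepsilon)}\,\d x,
\]
which is smooth and positive because the Gaussian kernel is. Then $V^\varepsilon(z)=-\log p_\varepsilon(sz)-n\log s$, so $\Hess V^\varepsilon(z)=s^2\,(\Hess(-\log p_\varepsilon))(sz)$. It therefore suffices to prove
\[
\frac{\kappa}{1+\kappa\varepsilon}I\preceq\Hess(-\log p_\varepsilon)\preceq\varepsilon^{-1}I.
\]

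\emph{Hessian of $-\log p_\varepsilon$.} Differentiate under the integral; the Gaussian factor and the quadratic growth of $V$ justify this. This gives
\[
\Hess(-\log p_\varepsilon)(y)=\varepsilon^{-1}I-\varepsilon^{-2}\Cov(\zeta_y),
\]
where $\zeta_y$ has law $\propto e^{-V(x)-|y-x|^2/(2\varepsilon)}\,\d x$. The upper bound follows immediately since $\Cov\succeq0$. For the lower bound, the potential $U_y(x)=V(x)+|x-y|^2/(2\varepsilon)$ satisfies the hypotheses of Lemma~\ref{lem:linear-BL} with $\lambda=\kappa+\varepsilon^{-1}$. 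Here $U_y$ is proper and lower semicontinuous, $U_y-\lambda|x|^2/2$ is convex, and $\int e^{-U_y}\in(0,\infty)$ because $\int e^{-V}=1$. The lemma gives $\Cov(\zeta_y)\preceq\varepsilon/(1+\kappa\varepsilon)\,I$. Hence
\[
\Hess(-\log p_\varepsilon)\succeq\varepsilon^{-1}\Bigl(1-\frac{1}{1+\kappa\varepsilon}\Bigr)I=\frac{\kappa}{1+\kappa\varepsilon}I.
\]

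\emph{Rescaling.} Multiplying both bounds by $s^2=1+\kappa\varepsilon$ yields exactly $\kappa I\preceq\Hess V^\varepsilon\preceq\frac{1+\kappa\varepsilon}{\varepsilon}I$.

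The main obstacle is the lower bound: $V$ may be extended-valued and nonsmooth, so a naive Prékopa-type argument is not available. This is why Lemma~\ref{lem:linear-BL} is stated in that generality. The only remaining technical point is justifying differentiation under the integral, which follows from Gaussian domination of all $x$-moments, locally uniformly in $y$.
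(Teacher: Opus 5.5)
Your proposal is correct and follows the paper's proof essentially step by step. You use the same coupling $(\xi,G)$ for the $W_2$ convergence, the same identity $\Hess(-\log p_\varepsilon)=\varepsilon^{-1}I-\varepsilon^{-2}\Cov(\xi\mid \xi+\sqrt\varepsilon G=y)$, Lemma~\ref{lem:linear-BL} with $\lambda=\kappa+\varepsilon^{-1}$ for the lower bound and $\Cov\succeq0$ for the upper bound, and the same rescaling by $1+\kappa\varepsilon$. The paper derives the covariance identity through Tweedie's formula and the derivative of the conditional mean, which is the same computation you do directly.
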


The lower Hessian bound is also the classical convolution rule for
strong log-concavity; see \cite[Proposition~5.5]{SaumardWellner}.  We
include the conditional-moment proof because it simultaneously gives 
the upper Hessian bound and covers extended-valued endpoint potentials. 

\begin{proof}
\medskip\noindent\textbf{Step 1: Convergence in $W_2$.}
Set $a_\varepsilon=\sqrt{1+\kappa\varepsilon}$. Writing $V=\kappa|x|^2/2+W$ with $W$ convex, choose an affine
minorant $W(x)\geq p\cdot x+b$. Then $e^{-V(x)}$ is bounded by a
constant times $\exp(-\kappa|x|^2/2-p\cdot x)$, and hence
$\mathbb E|\xi|^2<\infty$. Couple $\mu^\varepsilon$ and $\mu$ by using
the same pair $(\xi,G)$. By the definition of $W_2$,
\begin{equation} \label{eq:regularization-W2}
	\begin{aligned}
		W_2^2(\mu^\varepsilon,\mu)
		&\leq \mathbb E\left|
		(a_\varepsilon^{-1}-1)\xi
		+\frac{\sqrt\varepsilon}{a_\varepsilon}G
		\right|^2 \\
		&=(a_\varepsilon^{-1}-1)^2\mathbb E|\xi|^2
		+\frac{\varepsilon}{a_\varepsilon^2}\mathbb E|G|^2 =(a_\varepsilon^{-1}-1)^2\mathbb E|\xi|^2
		+\frac{\varepsilon n}{a_\varepsilon^2}\longrightarrow0, 
	\end{aligned}
\end{equation}
where the cross term vanishes since $G$ is independent of $\xi$ and
$\mathbb E[G]=0$. This proves $\mu^\varepsilon\to\mu$ in $W_2$.

\medskip\noindent\textbf{Step 2: Hessian bounds after Gaussian smoothing.}
Set $Y_\varepsilon=\xi+\sqrt{\varepsilon}G$, let
$\phi_\varepsilon$ be the density of $N(0,\varepsilon I)$, and write
\begin{equation}\label{eq:gaussian-convolution-density}
	p_\varepsilon(y)
	=\int\phi_\varepsilon(y-x)\,\d\mu(x)
	=e^{-U_\varepsilon(y)}.
\end{equation}
This density is smooth and strictly positive. For each $y\in\R^n$,
Bayes' rule gives
\begin{equation}
	\mathbb P(\xi\in\d x\mid Y_\varepsilon=y)
	=\frac{\phi_\varepsilon(y-x)}{p_\varepsilon(y)}\,\d\mu(x). 
\end{equation}
Define $m_\varepsilon(y)=\mathbb E[\xi\mid Y_\varepsilon=y]$. Then, for
$j=1,\ldots,n$,
\begin{equation}\label{eq:gaussian-convolution-density0}
	p_\varepsilon(y)m_{\varepsilon,j}(y)
	=\int x_j\phi_\varepsilon(y-x)\,\d\mu(x). 
\end{equation}

For $1\leq j,k\leq n$, differentiation of
\eqref{eq:gaussian-convolution-density} and \eqref{eq:gaussian-convolution-density0} yields 
\begin{equation}
	\partial_k p_\varepsilon(y)
	=-\varepsilon^{-1}
	\int(y_k-x_k)\phi_\varepsilon(y-x)\,\d\mu(x), 
\end{equation}
and 
\begin{equation}
	\partial_k\!\left(p_\varepsilon(y)m_{\varepsilon,j}(y)\right)
	=-\varepsilon^{-1}
	\int x_j(y_k-x_k)\phi_\varepsilon(y-x)\,\d\mu(x). 
\end{equation} 
The quotient rule therefore yields
\begin{equation}\label{eq:score-conditional-mean}
	\nabla\log p_\varepsilon(y)
	=-\varepsilon^{-1}\bigl(y-m_\varepsilon(y)\bigr),
	\qquad
	Dm_\varepsilon(y)
	=\varepsilon^{-1}\Cov(\xi\mid Y_\varepsilon=y).
\end{equation}
Indeed, the $(j,k)$ entry in the second identity is $\varepsilon^{-1}
\left(
\mathbb E[\xi_j\xi_k\mid Y_\varepsilon=y]
-m_{\varepsilon,j}(y)m_{\varepsilon,k}(y)
\right)$. 
Since $\mathbb E|\xi|^2<\infty$, differentiation under the integral
sign is justified by dominated convergence, locally uniformly in $y$. 
The first identity in \eqref{eq:score-conditional-mean} is the Gaussian
Tweedie formula.  The second expresses the derivative of the conditional
mean in terms of the conditional covariance.  See Efron \cite{EfronTweedie} for the
statistical formulation and Dytso, Poor, and Shamai
\cite{DytsoPoorShamai} for vector Gaussian derivative identities.
Since $U_\varepsilon=-\log p_\varepsilon$, differentiating the first
identity in \eqref{eq:score-conditional-mean} and substituting the
formula for $Dm_\varepsilon$ yields
\begin{equation}\label{eq:conditional-covariance}
 \Hess{U_\varepsilon}(y)
 =\varepsilon^{-1}I
 -\varepsilon^{-2}
   \Cov(\xi\mid Y_\varepsilon=y).
\end{equation}
Thus, up to an additive normalizing constant, the conditional law above has potential
\[
 x\longmapsto V(x)+\frac{|y-x|^2}{2\varepsilon},
\]
which is $(\kappa+\varepsilon^{-1})$-strongly convex in the
extended-valued sense.  Therefore, by Lemma~\ref{lem:linear-BL},
\begin{equation}\label{eq:posterior-covariance-bound}
		\Cov(\xi\mid Y_\varepsilon=y)
		\preceq
		\frac{\varepsilon}{1+\kappa\varepsilon}I.
\end{equation}
Substituting \eqref{eq:posterior-covariance-bound} into
\eqref{eq:conditional-covariance} gives
the lower Hessian bound, while
$\Cov(\xi\mid Y_\varepsilon=y)\succeq0$ gives the upper bound:
\begin{equation}
		\frac{\kappa}{1+\kappa\varepsilon}I
		\preceq \Hess{U_\varepsilon}
	\preceq\varepsilon^{-1}I.
\end{equation}

Finally, since $\xi^\varepsilon=Y_\varepsilon/a_\varepsilon$ and
$a_\varepsilon^2=1+\kappa\varepsilon$, the change of variables yields
\begin{equation}
	V^\varepsilon(x)
	=U_\varepsilon(a_\varepsilon x)-n\log a_\varepsilon,
	\qquad
	\Hess{V^\varepsilon}(x)
	=a_\varepsilon^2\Hess{U_\varepsilon}(a_\varepsilon x), 
\end{equation}
which proves \eqref{eq:regularized-Hessian}.
\end{proof}

\begin{lemma}[Regularity and bounds for the Brenier map]
\label{lem:transport-regularity}
Let $\d\mu_i=e^{-V_i}\,\d x$, $i=0,1$, be smooth positive
probability densities whose potentials satisfy
\[
 \lambda_i^- I\preceq \Hess{V_i}\preceq\lambda_i^+ I,
 \qquad 0<\lambda_i^-\leq\lambda_i^+<\infty. 
\]
Then the Brenier map $T=\nabla\Phi$ from $\mu_0$ to $\mu_1$ is a smooth
global diffeomorphism and
\begin{equation}\label{eq:caffarelli-bounds}
 \sqrt{\frac{\lambda_0^-}{\lambda_1^+}}\,I
 \preceq \Hess{\Phi}
 \preceq\sqrt{\frac{\lambda_0^+}{\lambda_1^-}}\,I.
\end{equation}
\end{lemma}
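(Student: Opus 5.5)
The plan is to derive the lemma from the two results the paper cites: Caffarelli's contraction theorem in the form of \cite[Theorem 2.2]{KolesnikovContractions}, and the regularity theory of Cordero-Erausquin and Figalli \cite[Remark 4 and Corollary 5]{CorderoFigalliUnbounded}.

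\textbf{Step 1: reduction to a Gaussian comparison.} First I would reduce to comparison with Gaussians. Let $\gamma_a=N(0,a^{-1}I)$. Caffarelli's theorem, in Kolesnikov's two-sided form, states the following. If $\Hess{V_0}\preceq\lambda_0^+I$ and $\Hess{V_1}\succeq\lambda_1^-I$, then the Brenier map satisfies $\Hess{\Phi}\preceq\sqrt{\lambda_0^+/\lambda_1^-}\,I$. In the Gaussian-to-log-concave case, this is the statement that the Brenier map from $\gamma_{\lambda_0^+}$ to a $\lambda_1^-$-strongly log-concave measure is $\sqrt{\lambda_0^+/\lambda_1^-}$-Lipschitz; Kolesnikov's version allows a general source with an upper Hessian bound. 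The lower bound comes from the same statement applied to the inverse map. Since $\nabla\Phi^*=T^{-1}$ is the Brenier map from $\mu_1$ to $\mu_0$, the hypotheses $\Hess{V_1}\preceq\lambda_1^+I$ and $\Hess{V_0}\succeq\lambda_0^-I$ give $\Hess{\Phi^*}\preceq\sqrt{\lambda_1^+/\lambda_0^-}\,I$. Because $\Hess{\Phi}(x)=(\Hess{\Phi^*}(T x))^{-1}$ wherever both are defined, this yields the lower bound in \eqref{eq:caffarelli-bounds}.

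\textbf{Step 2: regularity.} For regularity, the densities are smooth and positive on all of $\R^n$, and the potentials are locally bounded. Cordero-Erausquin and Figalli show that in this situation, with full support and locally bounded-above-and-below densities, $\Phi$ is strictly convex and $C^{1,\alpha}_{\mathrm{loc}}$ on $\R^n$. Smoothness of the densities then bootstraps through the Monge--Amp\`ere equation
\[
e^{-V_0}=e^{-V_1(\nabla\Phi)}\det\Hess{\Phi}
\]
to $\Phi\in C^\infty$. This is their Corollary 5. The global Lipschitz bounds from Step 1 help here, because they make the equation uniformly elliptic on $\R^n$.

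\textbf{Step 3: global diffeomorphism.} The bounds $\lambda I\preceq\Hess{\Phi}\preceq\Lambda I$ with $\lambda>0$ make $T=\nabla\Phi$ strongly monotone: $(T x-T y)\cdot(x-y)\ge\lambda|x-y|^2$. So $T$ is injective and proper. Exactly as in the proof of Proposition~\ref{prop:smooth}, the inverse function theorem plus properness gives surjectivity. Hence $T$ is a smooth global diffeomorphism, with smooth inverse $\nabla\Phi^*$.

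\textbf{Main obstacle.} The main difficulty is bookkeeping, not new mathematics: one must make sure the cited contraction theorem applies with a non-Gaussian source having only an upper Hessian bound. If Kolesnikov's statement is only available as a Gaussian-to-target result, I would compose maps: send $\mu_0$ to $\gamma_{\lambda_0^+}$, then $\gamma_{\lambda_0^+}$ to $\mu_1$. A composition of Brenier maps is not itself Brenier, however, so the direct two-sided version in \cite[Theorem 2.2]{KolesnikovContractions} is what should be invoked. Its proof is Caffarelli's maximum-principle argument applied to second derivatives of $\Phi$ in the log-form of the Monge--Amp\`ere equation, and that argument uses exactly $\Hess{V_0}\preceq\lambda_0^+$ and $\Hess{V_1}\succeq\lambda_1^-$.
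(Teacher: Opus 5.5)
Your proposal is correct and is essentially the paper's proof. You get smoothness from \cite[Corollary~5]{CorderoFigalliUnbounded}, apply the two-potential contraction bound \cite[Theorem~2.2]{KolesnikovContractions} to $T$ and to the reverse Brenier map $\nabla\Phi^*=T^{-1}$, and obtain the lower bound in \eqref{eq:caffarelli-bounds} by inverting $\Hess{\Phi^*}(Tx)=\Hess{\Phi}(x)^{-1}$. The only deviation is that you derive the global diffeomorphism property from strong monotonicity and properness, as in Proposition~\ref{prop:smooth}, whereas the paper reads it off directly from \cite[Remark~4 and Corollary~5]{CorderoFigalliUnbounded}; your remark that the Lipschitz bounds aid the bootstrap is also unnecessary, since that regularity is purely local.
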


\begin{proof}
	Since the endpoint densities are smooth and positive on $\R^n$, they
and their reciprocals are bounded on every compact set.  Hence
\cite[Corollary 5]{CorderoFigalliUnbounded}, applied for every order of
regularity, shows that $T=\nabla\Phi$ is a smooth global diffeomorphism.
Moreover, its inverse is the reverse Brenier map
$S:\mu_1\to\mu_0$; see
\cite[Remark 4]{CorderoFigalliUnbounded}.

Caffarelli's contraction theorem \cite{Caffarelli}, in the
two-potential form of \cite[Theorem~2.2]{KolesnikovContractions},
applied first to $T$ and then to $S$, yields 
\begin{equation}
	\operatorname{Lip}(T)
	\leq\sqrt{\frac{\lambda_0^+}{\lambda_1^-}},
	\qquad
	\operatorname{Lip}(S)
	\leq\sqrt{\frac{\lambda_1^+}{\lambda_0^-}}. 
\end{equation}
Since $S=T^{-1}$ and both maps are smooth, we have
$DS(T(x))=DT(x)^{-1}$.
The preceding Lipschitz bounds therefore imply
\begin{equation}
	\sqrt{\frac{\lambda_0^-}{\lambda_1^+}}\,I
	\preceq DT=\Hess{\Phi}
	\preceq\sqrt{\frac{\lambda_0^+}{\lambda_1^-}}\,I, 
\end{equation}
which proves \eqref{eq:caffarelli-bounds}.
\end{proof}

\begin{proof}[Proof of Theorem~\ref{thm:main}]
Apply Lemma~\ref{lem:regularization} to both endpoints with their respective
curvature constants.  Let $\varepsilon_j\downarrow0$, denote the
regularized endpoints by $\mu_i^j$, $i=0,1$, and let $\pi_j$ be their
quadratic optimal plans. By Lemma~\ref{lem:regularization} and
Lemma~\ref{lem:transport-regularity}, the assumptions of
Proposition~\ref{prop:smooth} hold with the same lower curvature constants
$\kappa_i$.

By the stability theorem for optimal transport plans
\cite[Theorem~5.20]{Villani}, every weak subsequential limit of
$(\pi_j)$ is optimal between $\mu_0$ and $\mu_1$.  Brenier's theorem
\cite{Brenier} gives a unique quadratic optimal plan $\pi$ because
$\mu_0\ll\mathcal L^n$.  Hence $\pi_j\to\pi$ weakly.  The marginals
converge in $W_2$, so the joint second moments also converge.  It follows
from \cite[Theorem~6.9]{Villani} that
$\pi_j\to\pi$ in $W_2(\R^n\times\R^n)$.  Fix $t\in[0,1]$ and let
$L_t(x,y)=(1-t)x+ty$. Since $L_t$ is Lipschitz,
\begin{equation}
	 \mu_t^j:=(L_t)_\#\pi_j
	\rightarrow (L_t)_\#\pi=\mu_t
	\quad\text{in }W_2. 
\end{equation}
For $f\in C_c^\infty(\R^n)$, by Proposition~\ref{prop:smooth}, we obtain 
\begin{equation}
	\Var_{\mu_t^j}(f)
	\leq\left(\frac{1-t}{\sqrt{\kappa_0}}
	+\frac{t}{\sqrt{\kappa_1}}\right)^2
	\int|\nabla f|^2\,\d\mu_t^j.
\end{equation}
Since $f$, $f^2$, and $|\nabla f|^2$ are bounded and continuous,
passing to the limit proves \eqref{eq:main}. Since $t\in[0,1]$ was
arbitrary, the conclusion holds along the entire geodesic.
\end{proof}

\section{Gaussian rigidity}\label{sec:equality}
Theorem~\ref{thm:equality} states that equality at an interior time is
equivalent to a common Gaussian factor.  The variances of the two
Gaussian factors are $\kappa_i^{-1}$.  We first prove a quantitative
rigidity lemma at one endpoint.  It shows that a near-extremizer of the
Poincar\'e inequality has an almost constant gradient.  We apply this
lemma to regularized near-extremizers for $\mu_t$.  The resulting limits
are affine in the same direction at both endpoints, which gives the
common Gaussian factorization.  The converse follows by splitting the
optimal transport along this Gaussian direction.
Quantitative affine approximation of Poincar\'e near-extremizers for
strongly log-concave measures was developed by De~Philippis and Figalli
\cite[Section~4]{DePhilippisFigalli}. Courtade and Fathi proved an
$O(\varepsilon)$ squared $L^2$-gradient estimate for normalized
near-extremizers \cite[Lemma~2.2]{CourtadeFathi}. The next lemma is its
rescaled, homogeneous one-function form; we include the short derivation
to make the scaling explicit.

\begin{lemma}[Homogeneous form of quantitative Poincar\'e stability]\label{lem:near-equality}
Let $\nu$ be the probability measure $\d\nu=e^{-W}\,\d x$, where $W$ is
smooth and
\[
 \Hess{W}\succeq\kappa I,\qquad \kappa>0.
\]
If $h\in H^1(\nu)$ satisfies $\int h\,\d\nu=0$, set
\[
 D=\int|\nabla h|^2\,\d\nu-\kappa\int h^2\,\d\nu,\qquad
 a=\int\nabla h\,\d\nu.
\]
Then $D\geq0$ and
\begin{equation}\label{eq:near-equality}
 \int|\nabla h-a|^2\,\d\nu
 \leq 9D.
\end{equation}
\end{lemma}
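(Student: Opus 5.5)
The plan is to compare $\nabla h$ with the gradient of a Poisson solution for $\nu$, by the scalar ($H=I$) case of the argument in Proposition~\ref{prop:poisson}. Write $L=\Delta-\ip{\nabla W}{\nabla\cdot}$, so that $-\diver_\nu(\nabla\varphi)=-L\varphi$. First suppose there is a smooth $\varphi$ with $-L\varphi=h$ for which Lemma~\ref{lem:vector-bochner} may be applied to $X=\nabla\varphi$.

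Since $D(\nabla\varphi)=\Hess{\varphi}$ is symmetric, the Bochner identity gives
\[
 \int h^2\,\d\nu=\int(L\varphi)^2\,\d\nu
 \geq\int\norm{\Hess{\varphi}}_{\HS}^2\,\d\nu+\kappa\int|\nabla\varphi|^2\,\d\nu .
\]
Integration by parts gives $\int\ip{\nabla h}{\nabla\varphi}\,\d\nu=\int h^2\,\d\nu$. Expanding the square then yields
\begin{align*}
 \int|\nabla h-\kappa\nabla\varphi|^2\,\d\nu
 &=\int|\nabla h|^2\,\d\nu-2\kappa\int h^2\,\d\nu+\kappa^2\int|\nabla\varphi|^2\,\d\nu\\
 &\leq D-\kappa\int\norm{\Hess{\varphi}}_{\HS}^2\,\d\nu .
\end{align*}
This single inequality gives both $D\geq0$ and $\kappa\int\norm{\Hess{\varphi}}_{\HS}^2\,\d\nu\leq D$.

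Next, set $b=\int\nabla\varphi\,\d\nu$. Apply the Poincar\'e inequality for $\nu$ (constant $\kappa^{-1}$, by Brascamp--Lieb or Theorem~\ref{thm:main} with $t=0$) to each partial derivative $\partial_j\varphi$. This gives
\[
 \int|\nabla\varphi-b|^2\,\d\nu\leq\kappa^{-1}\int\norm{\Hess{\varphi}}_{\HS}^2\,\d\nu\leq\kappa^{-2}D .
\]
By the triangle inequality, $\norm{\nabla h-\kappa b}_{L^2(\nu)}\leq\sqrt D+\sqrt D=2\sqrt D$. The mean $a$ minimizes $c\mapsto\int|\nabla h-c|^2\,\d\nu$ over constant vectors $c$. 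Therefore
\[
 \int|\nabla h-a|^2\,\d\nu\leq 4D\leq 9D .
\]
The constant $9$ leaves room for a lossier approximation, for instance an extra $\sqrt D$ error term, which gives $(3\sqrt D)^2$.

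The main obstacle is justification, since we have no smooth, compactly supported Poisson solution. I would argue by approximation exactly as in Steps~2--3 of Proposition~\ref{prop:poisson} with $H=I$. There, the local-regularity argument shows that the range of $-L$ on $C_c^\infty(\R^n)$ is dense in $L_0^2(\nu)$; no upper bound on $\Hess{W}$ is needed for that. So I would choose $\varphi_k\in C_c^\infty$ with $h_k:=-L\varphi_k\to h$ in $L^2(\nu)$.

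For these $\varphi_k$, the Bochner identity holds, and it shows that $(\nabla\varphi_k)$ is Cauchy in $L^2(\nu)$ and that $(\Hess{\varphi_k})$ is bounded in $L^2$. The identity $\int\ip{\nabla h}{\nabla\varphi_k}\,\d\nu=\int h\,h_k\,\d\nu$ needs only $h\in H^1(\nu)$ and $\varphi_k\in C_c^\infty$. I would run the expansion with $h$ fixed and $\varphi_k$ in place of $\varphi$; the errors are $O(\norm{h-h_k}_{L^2})$. Letting $k\to\infty$, using lower semicontinuity for the Hessian term and the componentwise Poincar\'e inequality for each $\varphi_k$, recovers both $D\geq0$ and \eqref{eq:near-equality}.
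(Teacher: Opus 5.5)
Your proposal is correct, but it takes a different route from the paper.

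\textbf{The paper's route.} The paper does not redo the analysis. It pushes $\nu$ forward by $x\mapsto\sqrt\kappa\,x$ to get a $1$-strongly log-concave measure and normalizes $\norm{h}_{L^2(\nu)}=1$. It then cites \cite[Lemma~2.2]{CourtadeFathi} with $k=1$, which gives a unit vector $w$ with $\int|\nabla u-w|^2\,\d\tilde\nu\leq 9D/(\kappa s^2)$. Finally it undoes the scaling and uses that $a$ minimizes $c\mapsto\int|\nabla h-c|^2\,\d\nu$, which is also your last step.

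\textbf{Your route.} You reprove the stability estimate directly with the paper's own tools: the scalar ($H=I$) Poisson equation, Lemma~\ref{lem:vector-bochner}, and the componentwise Poincar\'e inequality. The supporting claims hold as you state them.
\begin{itemize}
\item The density of the range of $-L$ on $C_c^\infty$ follows from Step~2 of Proposition~\ref{prop:poisson} with $H=I$, and needs no upper Hessian bound.
\item The identity $\int\ip{\nabla h}{\nabla\varphi_k}\,\d\nu=\int h\,h_k\,\d\nu$ needs only $h\in H^1(\nu)$ and $\varphi_k\in C_c^\infty$.
\end{itemize}
What this buys is a self-contained, homogeneous argument with no normalization and a better constant. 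The paper's version is shorter on the page but delegates the work to the external lemma and inherits its constant $9$. Only the qualitative consequence ($\nabla h-a\to0$ in $L^2$ as $D\to0$) is used in Section~\ref{sec:equality}, so either constant suffices there.

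\textbf{Two simplifications.} First, the approximation is easier than you describe. Set $A_k=\kappa\int\norm{\Hess{\varphi_k}}_{\HS}^2\,\d\nu$ and $\varepsilon_k=\kappa\norm{h-h_k}_{L^2(\nu)}^2$. Using $-2h h_k+h_k^2=(h-h_k)^2-h^2$, your expansion gives exactly
\[
 \int|\nabla h-\kappa\nabla\varphi_k|^2\,\d\nu+A_k\leq D+\varepsilon_k .
\]
So you need neither the Cauchy property of $(\nabla\varphi_k)$ nor lower semicontinuity. You can finish at fixed $k$ and let $k\to\infty$ in the resulting scalar inequality.

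Second, you can sharpen the constant. Bound the two triangle-inequality pieces by $\sqrt{D+\varepsilon_k-A_k}$ and $\sqrt{A_k}$, rather than by $\sqrt{D}$ each. This gives $\int|\nabla h-a|^2\,\d\nu\leq 2(D+\varepsilon_k)$, hence the constant $2$. This is sharp: for $\nu=N(0,I)$ and $h=(x_1^2-1)/\sqrt2$ one has $D=1$, $a=0$, and $\int|\nabla h|^2\,\d\nu=2$.
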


\begin{proof}
Since $\int h\,\d\nu=0$, the Brascamp--Lieb inequality implies $D\geq0$.
The assertion is trivial when $h=0$, so we may assume
$s:=\norm{h}_{L^2(\nu)}>0$. 

Define $\tilde\nu=(x\mapsto\sqrt{\kappa}x)_\#\nu$ and
$u(z)=s^{-1}h(z/\sqrt\kappa)$. Then $\tilde\nu$ is $1$-strongly
log-concave and
\begin{equation}
	\int u\,\d\tilde\nu=0,\qquad \int u^2\,\d\tilde\nu=1,\qquad
	\int|\nabla u|^2\,\d\tilde\nu=1+\frac{D}{\kappa s^2}.
\end{equation}

Applying \cite[Lemma~2.2]{CourtadeFathi} with $k=1$, we obtain a unit vector
$w\in\R^n$ such that 
\begin{equation}
	 \int|\nabla u-w|^2\,\d\tilde\nu\leq \frac{9D}{\kappa s^2}. 
\end{equation}
Since $a=\int\nabla h\,\d\nu$ minimizes
$c\longmapsto\int|\nabla h-c|^2\,\d\nu$ over $c\in\R^n$,
\begin{equation}
	 \int|\nabla h-a|^2\,\d\nu
	\leq\int|\nabla h-\sqrt\kappa s\,w|^2\,\d\nu=
	\kappa s^2
	\int|\nabla u-w|^2
	\,\d\tilde\nu\leq9D.
\end{equation}
\end{proof}

\begin{proof}[Proof of Theorem~\ref{thm:equality}]
\medskip\noindent\textbf{Step 1: Alignment of the vector fields.}
Fix $t\in(0,1)$ and first suppose  
\begin{equation}
	C_P(\mu_t)
	=
	\left(
	\frac{1-t}{\sqrt{\kappa_0}}
	+\frac{t}{\sqrt{\kappa_1}}
	\right)^2. 
\end{equation}
Choose $\varphi_j\in C_c^\infty(\R^n)$ such that
\begin{equation}
	 \Var_{\mu_t}(\varphi_j)=1,\qquad
	\int|\nabla\varphi_j|^2\,\d\mu_t
	\rightarrow
	\left(\frac{1-t}{\sqrt{\kappa_0}}
	+\frac{t}{\sqrt{\kappa_1}}\right)^{-2}. 
\end{equation}
For $\varepsilon>0$, let $\mu_i^\varepsilon$ be the regularization of
$\mu_i$ given by Lemma~\ref{lem:regularization}, and let
$\mu_t^\varepsilon$ be the corresponding interpolated measure.
By the stability argument in the proof of Theorem~\ref{thm:main},
$\mu_t^\varepsilon\to\mu_t$ in $W_2$
as $\varepsilon\to0$. Since $\varphi_j$, $\varphi_j^2$, and
$|\nabla\varphi_j|^2$ are bounded and continuous, we may choose
$0<\varepsilon_j<1/j$ such that 
\begin{equation}
	\left|
		\Var_{\mu_t^{\varepsilon_j}}(\varphi_j)
	-
		\Var_{\mu_t}(\varphi_j)
	\right|
	\leq\frac{1}{j},\qquad \left|
	\int|\nabla\varphi_j|^2\,\d\mu_t^{\varepsilon_j}
	-
	\int|\nabla\varphi_j|^2\,\d\mu_t
	\right|\leq\frac{1}{j}. 
\end{equation}

Denote the regularized endpoints by 
$\mu_i^j:=\mu_i^{\varepsilon_j}$, $i=0,1$, and let $T_j$ be the
Brenier map from $\mu_0^j$ to $\mu_1^j$. Denote $ C_t=(1-t)\kappa_0^{-1/2}+t\kappa_1^{-1/2}$ and set 
\[
 H_j=DT_j,\qquad F_j=(1-t)I+tT_j,\qquad \mu_t^j:=\mu_t^{\varepsilon_j}=(F_j)_\#\mu_0^j. 
\]
For all sufficiently large $j$, define $f_j:=\bigl(\varphi_j-\int\varphi_j\,\d\mu_t^j\bigr)/
\sqrt{\Var_{\mu_t^j}(\varphi_j)}$. Then we have  
\begin{equation}\label{eq:intermediate-extremizers}
 \int f_j\,\d\mu_t^j=0,\qquad
 \int |f_j|^2\,\d\mu_t^j=1,\qquad
\int |\nabla f_j|^2\,\d\mu_t^j
 \rightarrow C_t^{-2}. 
\end{equation}
Set $g_j=f_j\circ F_j$ and $r_j=(\nabla f_j)\circ F_j$. Then we have  
\begin{equation}
	 \int g_j\,\d\mu_0^j=0, \qquad  \int |g_j|^2\,\d\mu_0^j=1,\qquad \int | r_j|^2\,\d\mu_0^j
	 \rightarrow C_t^{-2}. 
\end{equation}
Applying Proposition~\ref{prop:poisson} to $g_j$, we obtain a centered
function $u_j\in H^1(\mu_0^j)$.  Set
$X_j=H_j^{-1}\nabla u_j$ and $\mathsf Y_j=\nabla u_j$. Thus
$\mathsf Y_j$ is the target field from Proposition~\ref{prop:poisson}
pulled back by $T_j$, and
\begin{equation}\label{eq:regularized-fields}
 -\diver_{\mu_0^j}X_j=g_j,\qquad
 \norm{X_j}_{L^2(\mu_0^j)}\leq\kappa_0^{-1/2},\qquad
 \norm{\mathsf Y_j}_{L^2(\mu_0^j)}\leq\kappa_1^{-1/2}.
\end{equation}
All norms and inner products below are taken in $L^2(\mu_0^j)$.
Since $F_j$ is bi-Lipschitz, $g_j\in H^1(\mu_0^j)$ and is hence 
admissible as a test function in \eqref{eq:regularized-fields}. 
Since $\nabla g_j=((1-t)I+tH_j)r_j$,
\begin{equation}\label{eq:rigidity-sharpness-chain}
\begin{aligned}
		1=\int g_j^2\,\d\mu_0^j
	\overset{\eqref{eq:regularized-fields}}{=}\int X_j\cdot\nabla g_j\,\d\mu_0^j
	&=\int r_j\cdot\bigl((1-t)X_j+t\mathsf Y_j\bigr)\,\d\mu_0^j\\
	&\leq\norm{r_j}_{L^2(\mu_0^j)}
	\norm{(1-t)X_j+t\mathsf Y_j}_{L^2(\mu_0^j)}\\
	&\leq\norm{r_j}_{L^2(\mu_0^j)}
	\big((1-t)\norm{X_j}_{L^2(\mu_0^j)}+t\norm{\mathsf Y_j}_{L^2(\mu_0^j)}\big)\\
	&\leq C_t\norm{r_j}_{L^2(\mu_0^j)}\overset{\eqref{eq:intermediate-extremizers}}{\rightarrow}1, 
\end{aligned}
\end{equation}
The following chain is therefore asymptotically sharp, with
$\norm{r_j}_{L^2(\mu_0^j)}^{-1}\to C_t$:
\begin{equation}
	\norm{r_j}_{L^2(\mu_0^j)}^{-1}
	\leq\norm{(1-t)X_j+t\mathsf Y_j}_{L^2(\mu_0^j)}
	\leq(1-t)\norm{X_j}_{L^2(\mu_0^j)}+t\norm{\mathsf Y_j}_{L^2(\mu_0^j)}
	\leq C_t. 
\end{equation}
To make explicit which inequalities become sharp, set
\begin{align*}
 \delta_j^{\rm CS}
 &:=\norm{(1-t)X_j+t\mathsf Y_j}_{L^2(\mu_0^j)}
   -\norm{r_j}_{L^2(\mu_0^j)}^{-1},\\
 \delta_j^{\triangle}
 &:=(1-t)\norm{X_j}_{L^2(\mu_0^j)}
   +t\norm{\mathsf Y_j}_{L^2(\mu_0^j)}
   -\norm{(1-t)X_j+t\mathsf Y_j}_{L^2(\mu_0^j)},\\
 \delta_j^{\rm end}
 &:=C_t-(1-t)\norm{X_j}_{L^2(\mu_0^j)}
   -t\norm{\mathsf Y_j}_{L^2(\mu_0^j)}.
\end{align*}
These three quantities are nonnegative, and
\[
 \delta_j^{\rm CS}+\delta_j^{\triangle}+\delta_j^{\rm end}
 =C_t-\norm{r_j}_{L^2(\mu_0^j)}^{-1}\longrightarrow0.
\]
In particular, $\delta_j^{\rm end}\to0$. Since $0<t<1$ and the endpoint
norms are bounded above by $\kappa_0^{-1/2}$ and
$\kappa_1^{-1/2}$, respectively, we have
\[
 \delta_j^{\rm end}
 =(1-t)\bigl(\kappa_0^{-1/2}-\norm{X_j}_{L^2(\mu_0^j)}\bigr)
 +t\bigl(\kappa_1^{-1/2}-\norm{\mathsf Y_j}_{L^2(\mu_0^j)}\bigr),
\]
and each term on the right is nonnegative. Hence
\begin{equation}\label{eq:endpoint-field-norms}
		 \norm{X_j}_{L^2(\mu_0^j)}\rightarrow\kappa_0^{-1/2},\qquad
		\norm{\mathsf Y_j}_{L^2(\mu_0^j)}\to\kappa_1^{-1/2}.
\end{equation}
Expanding $\norm{(1-t)X_j+t\mathsf Y_j}_{L^2(\mu_0^j)}^2$ and combining with
\eqref{eq:endpoint-field-norms} yields 
\begin{equation}\label{eq:endpoint-field-inner-product}
 \ip{X_j}{\mathsf Y_j}_{L^2(\mu_0^j)}
 \rightarrow(\kappa_0\kappa_1)^{-1/2}.
\end{equation}
Therefore, we have  
\begin{equation}\label{eq:field-alignment}
	\begin{aligned}
		\norm{\mathsf Y_j-\sqrt{\frac{\kappa_0}{\kappa_1}}X_j}_{L^2(\mu_0^j)}^2
		=\norm{\mathsf Y_j}_{L^2(\mu_0^j)}^2
		+\frac{\kappa_0}{\kappa_1}\norm{X_j}_{L^2(\mu_0^j)}^2
		-2\sqrt{\frac{\kappa_0}{\kappa_1}}\ip{X_j}{\mathsf Y_j}_{L^2(\mu_0^j)}
		\rightarrow0.
	\end{aligned}
\end{equation}

\medskip\noindent\textbf{Step 2: Near-extremizers at the endpoints.}
We now use the limits of the vector fields to obtain near-extremizers at
the two endpoints.  Define $v_j=u_j\circ T_j^{-1}$.
Since $T_j$ is bi-Lipschitz and
$(T_j)_\#\mu_0^j=\mu_1^j$, we have
$v_j\in H^1(\mu_1^j)$ and
$\int v_j\,\d\mu_1^j=0$.  The Sobolev chain rule and change of variables yield
\begin{equation}\label{eq:target-gradient}
	(\nabla v_j)\circ T_j=X_j,\qquad
	\norm{v_j}_{L^2(\mu_1^j)}
	=\norm{u_j}_{L^2(\mu_0^j)}.
\end{equation}
Testing the weak divergence equation in
\eqref{eq:regularized-fields} with $u_j$, we obtain 
\begin{equation}
	 \int g_j u_j\,\d\mu_0^j
	=\int X_j\cdot\nabla u_j\,\d\mu_0^j
	=\ip{X_j}{\mathsf Y_j}_{L^2(\mu_0^j)}.
\end{equation}
Together with \eqref{eq:endpoint-field-inner-product}, this gives
\begin{equation}
		\left|\int g_j u_j\,\d\mu_0^j\right|
	=(\kappa_0\kappa_1)^{-1/2}+o(1).
\end{equation}
Since $\norm{g_j}_{L^2(\mu_0^j)}=1$, the endpoint Brascamp--Lieb inequalities and
\eqref{eq:target-gradient} yield
\begin{equation}
\begin{aligned}
	(\kappa_0\kappa_1)^{-1/2}+o(1)
	&\leq\norm{u_j}_{L^2(\mu_0^j)}\norm{g_j}_{L^2(\mu_0^j)}
	=\norm{v_j}_{L^2(\mu_1^j)}\\
	&\leq
	\min\left\{
	\kappa_0^{-1/2}\norm{\mathsf Y_j}_{L^2(\mu_0^j)},
	\kappa_1^{-1/2}\norm{X_j}_{L^2(\mu_0^j)}
	\right\}\\
	&=(\kappa_0\kappa_1)^{-1/2}+o(1), 
\end{aligned}
\end{equation}
which implies $\norm{u_j}_{L^2(\mu_0^j)}=\norm{v_j}_{L^2(\mu_1^j)}
\rightarrow(\kappa_0\kappa_1)^{-1/2}$ and the endpoint Poincar\'e deficits
\begin{equation}
	\delta_{0j}
	=\int|\nabla u_j|^2\,\d\mu_0^j
	-\kappa_0\int u_j^2\,\d\mu_0^j\rightarrow 0,
	\qquad
	\delta_{1j}
	=\int|\nabla v_j|^2\,\d\mu_1^j
	-\kappa_1\int v_j^2\,\d\mu_1^j\rightarrow 0.
\end{equation}

Let $p_j=\int\nabla u_j\,\d\mu_0^j$ and
$q_j=\int\nabla v_j\,\d\mu_1^j$. Since the regularized potentials have
the same curvature lower bounds as the original endpoints,
Lemma~\ref{lem:near-equality} gives
\begin{equation}\label{eq:near-affine-gradient}
\nabla u_j-p_j\rightarrow0
\quad\text{in }L^2(\mu_0^j),
\qquad
\nabla v_j-q_j\rightarrow0
\quad\text{in }L^2(\mu_1^j).
\end{equation}
Since $p_j=\int \mathsf Y_j\,\d\mu_0^j$ and
$q_j=\int X_j\,\d\mu_0^j$, the orthogonal decompositions
\begin{equation}
	 \norm{\mathsf Y_j}_{L^2(\mu_0^j)}^2
	=\norm{\mathsf Y_j-p_j}_{L^2(\mu_0^j)}^2+|p_j|^2,
	\qquad
	\norm{X_j}_{L^2(\mu_0^j)}^2
	=\norm{X_j-q_j}_{L^2(\mu_0^j)}^2+|q_j|^2 
\end{equation}
imply
$|p_j|\to\kappa_1^{-1/2}$ and $|q_j|\to\kappa_0^{-1/2}$. Combining this
with \eqref{eq:field-alignment}, we obtain
\begin{equation}
	\left|p_j-\sqrt{\frac{\kappa_0}{\kappa_1}}q_j\right|
	\leq \norm{\mathsf Y_j-\sqrt{\frac{\kappa_0}{\kappa_1}}X_j}_{L^2(\mu_0^j)}
	\rightarrow 0. 
\end{equation}
Since $|\sqrt{\kappa_1}\,p_j|\to1$ and
$\sqrt{\kappa_1}\,p_j-\sqrt{\kappa_0}\,q_j\to0$, after passing to a
subsequence, there exists $e\in\R^n$ with $|e|=1$ such that
\begin{equation}\label{eq:common-direction}
	\sqrt{\kappa_1}\,p_j\to e,
	\qquad
	\sqrt{\kappa_0}\,q_j\to e.
\end{equation}

\medskip\noindent\textbf{Step 3: Affine limits and Gaussian integration by parts.}
Let $m_i^j=\int x\,\d\mu_i^j$. By
\eqref{eq:near-affine-gradient} and the endpoint Poincar\'e inequalities,
\begin{equation}\label{eq:affine-approximation}
 \norm{u_j-p_j\cdot(x-m_0^j)}_{H^1(\mu_0^j)}\rightarrow0,\qquad
 \norm{v_j-q_j\cdot(x-m_1^j)}_{H^1(\mu_1^j)}\rightarrow0.
\end{equation}
For $i=0,1$ and $r,s\in H^1(\mu_i^j)$, consider the symmetric
positive-semidefinite bilinear form
\[
 Q_i^{(j)}(r,s)
 =\int\nabla r\cdot\nabla s\,\d\mu_i^j
  -\kappa_i\Cov_{\mu_i^j}(r,s).
\]
The Poincar\'e inequality makes $Q_i^{(j)}$ positive semidefinite, so it
satisfies the Cauchy--Schwarz inequality. For fixed
$\psi\in C_c^\infty(\R^n)$,
\begin{equation}
	0\leq Q_i^{(j)}(\psi,\psi)
	\leq\int|\nabla\psi|^2\,\d\mu_i^j
	\leq\norm{\nabla\psi}_{L^\infty}^2,\qquad \text{uniformly in $j$}, 
\end{equation}
which implies  
\begin{equation}
	|Q_0^{(j)}(u_j,\psi)|^2
	\leq\delta_{0j}Q_0^{(j)}(\psi,\psi)\rightarrow0,\qquad 	|Q_1^{(j)}(v_j,\psi)|^2
	\leq\delta_{1j}Q_1^{(j)}(\psi,\psi)\rightarrow0. 
\end{equation}
Let $\ell_{0j}=p_j\cdot(x-m_0^j)$. By
\eqref{eq:affine-approximation},
\begin{equation}
		|Q_0^{(j)}(u_j-\ell_{0j},\psi)|^2\leq\norm{\nabla(u_j-\ell_{0j})}_{L^2(\mu_0^j)}^2
		Q_0^{(j)}(\psi,\psi)\to0.
\end{equation}
Hence we have  
\begin{equation}
	 Q_0^{(j)}(\ell_{0j},\psi)
	=Q_0^{(j)}(u_j,\psi)
	-Q_0^{(j)}(u_j-\ell_{0j},\psi)
	\rightarrow0, 
\end{equation}
and the analogous conclusion holds at the target.  Expanding the
source identity yields 
\begin{equation}
 Q_0^{(j)}\bigl(p_j\cdot(x-m_0^j),\psi\bigr)
=p_j\cdot\left[
\int\nabla\psi\,\d\mu_0^j
-\kappa_0\int(x-m_0^j)\psi\,\d\mu_0^j
\right],
\end{equation}
and the target identity is the same with $q_j,\mu_1^j,\kappa_1$.
Moreover, for fixed $\psi\in C_c^\infty(\R^n)$, the functions
$\nabla\psi$, $\psi$, and $x\psi(x)$ are bounded and continuous.
Thus every term in the brackets converges under weak convergence, while
$m_i^j\to m_i$ follows from $W_2$ convergence. Combining with 
\eqref{eq:common-direction} and passing to the limit, we obtain 
\begin{equation}\label{eq:Gaussian-IBP}
 \int e\cdot\nabla\psi\,\d\mu_i
 =\kappa_i\int e\cdot(x-m_i)\psi(x)\,\d\mu_i,
 \qquad \psi\in C_c^\infty(\R^n),\quad i=0,1.
\end{equation}

\medskip\noindent\textbf{Step 4: Gaussian factorization at both endpoints.}
We finish the necessity by identifying the measures satisfying
\eqref{eq:Gaussian-IBP}.  For $i=0,1$, define the deterministic coordinate maps
\[
 s_i(x)=e\cdot(x-m_i),
 \qquad
 z_i(x)=P_{e^\perp}(x-m_i).
\]
If $X_i\sim\mu_i$, then $S_i=s_i(X_i)$ and $Z_i=z_i(X_i)$ are the
corresponding random coordinates.  Their joint characteristic function is
\begin{equation}
	 \phi_i(\tau,\xi)
	=\mathbb E\,e^{\,\mathrm{i}(\tau S_i+\xi\cdot Z_i)}
	=\int e^{\,\mathrm{i}(\tau s_i(x)+\xi\cdot z_i(x))}\,\d\mu_i(x),
	\qquad \tau\in\R,\quad \xi\in e^\perp. 
\end{equation}

Choose a radial cut-off function $\chi\in C_c^\infty(\R^n)$ such that
$0\leq\chi\leq1$, $\chi=1$ on $B_1$, and
$\operatorname{supp}\chi\subseteq B_2$. We may apply 
\eqref{eq:Gaussian-IBP} to the real and imaginary parts of $\psi_R:=\chi\!\left((x-m_i)/R\right)
e^{\,\mathrm{i}(\tau s_i(x)+\xi\cdot z_i(x))}$ and then combine the resulting identities. Note that 
\begin{equation}
	e\cdot\nabla\psi_R(x)
=
\frac1R
e\cdot\nabla\chi\!\left(\frac{x-m_i}{R}\right)
e^{\,\mathrm{i}(\tau s_i(x)+\xi\cdot z_i(x))}+\mathrm{i}\tau\chi\!\left(\frac{x-m_i}{R}\right)e^{\,\mathrm{i}(\tau s_i(x)+\xi\cdot z_i(x))}.
\end{equation}
Applying \eqref{eq:Gaussian-IBP} to $\psi_R$ and then passing to the limit, we obtain 
\begin{equation}\label{eq:characteristic-function-IBP}
	\mathrm{i}\tau\phi_i(\tau,\xi)
	=
	\kappa_i
	\int
	s_i(x)e^{\,\mathrm{i}(\tau s_i(x)+\xi\cdot z_i(x))}
	\,\d\mu_i(x)=\kappa_i \mathrm{i}^{-1}\partial_\tau\phi_i(\tau,\xi), 
\end{equation}
which implies
$\phi_i(\tau,\xi)=e^{-\tau^2/(2\kappa_i)}\phi_i(0,\xi)$. By uniqueness of characteristic functions, $S_i\sim N(0,\kappa_i^{-1})$ and is independent of $Z_i$.  Applying
this argument to $i=0,1$ proves \eqref{eq:factor} with the same
direction $e$.

\medskip\noindent\textbf{Step 5: The converse and equality at all times.}
Conversely, suppose that \eqref{eq:factor} holds. Let $G_i$ denote the Gaussian
factor in \eqref{eq:factor}. Note that the quadratic cost is additive under
$\R e\oplus e^\perp$. Hence the product of optimal couplings between
$(G_0,G_1)$ and $(\nu_0,\nu_1)$ is optimal between the full endpoints.
Since $\mu_0$ is absolutely continuous, the quadratic optimal transport
plan is unique and therefore coincides with this product plan. Thus the
optimal transport splits along $\R e\oplus e^\perp$. 

Let $(X_0,X_1)$ have the optimal joint law and set
$S_i=e\cdot(X_i-m_i)$ for $i=0,1$. Then $S_0,S_1$ are the centered Gaussian
coordinates of the two marginals.  Moreover, the optimal Gaussian coupling satisfies
$S_1=\sqrt{\kappa_0/\kappa_1}\,S_0$ almost surely. At time $t\in(0,1)$, let
\[
\ell_t(x)=e\cdot(x-m_t),
\qquad
m_t=\int x\,\d\mu_t.
\]
Set $X_t=(1-t)X_0+tX_1$, so that $X_t\sim\mu_t$ and
$m_t=(1-t)m_0+tm_1$. Hence
\begin{equation}
	\ell_t(X_t)
	=(1-t)S_0+tS_1
	=
	\left(
	(1-t)+t\sqrt{\frac{\kappa_0}{\kappa_1}}
	\right)S_0.
\end{equation}
Consequently, since $\Var(S_0)=\kappa_0^{-1}$ and
$\nabla\ell_t=e$ with $|e|=1$, we obtain
\begin{equation}
	\Var_{\mu_t}(\ell_t)
	=
	\left(
	\frac{1-t}{\sqrt{\kappa_0}}
	+\frac{t}{\sqrt{\kappa_1}}
	\right)^2,
	\qquad
	\int|\nabla\ell_t|^2\,\d\mu_t=1.
\end{equation}

Although $\ell_t$ is not compactly supported, its standard cutoffs $\ell_{t,R}(x)=\chi(x/R)\ell_t(x)$,
where $\chi\in C_c^\infty(\R^n)$ is radial, $\chi=1$ on $B_1$, and $\operatorname{supp}\chi\subseteq B_2$, satisfy
\begin{equation}\label{eq:linear-cutoff-convergence}
	\ell_{t,R}\longrightarrow\ell_t
	\qquad\text{in } H^1(\mu_t). 
\end{equation}
Indeed,
\begin{equation}
	\nabla\ell_{t,R}
	=
	\chi(x/R)e
	+
	R^{-1}\ell_t(x)(\nabla\chi)(x/R), 
\end{equation}
and the convergence follows from $\mu_t\in\mathcal P_2(\R^n)$. 

Therefore, by the definition of the Poincar\'e constant and the
convergences in \eqref{eq:linear-cutoff-convergence},
\begin{equation}\label{eq:Gaussian-factor-Rayleigh}
	C_P(\mu_t)
	\geq
	\lim_{R\to\infty}
	\frac{\Var_{\mu_t}(\ell_{t,R})}
	{\int|\nabla\ell_{t,R}|^2\,\d\mu_t}
	=
	\left(\frac{1-t}{\sqrt{\kappa_0}}
	+\frac{t}{\sqrt{\kappa_1}}\right)^2.
\end{equation}
Combining with Theorem~\ref{thm:main} proves the result. Replacing $t$ by
any $s\in[0,1]$ in \eqref{eq:linear-cutoff-convergence} and
\eqref{eq:Gaussian-factor-Rayleigh} proves equality at every time.
\end{proof}

The equality argument identifies all common Gaussian directions, not
only a single one.  This yields the following maximal factorization.

\begin{proposition}[Maximal common Gaussian factor]
\label{prop:maximal-Gaussian-factor}
Let $\mu_i$ be $\kappa_i$-strongly log-concave probability measures
on $\R^n$, let $m_i=\int x\,\d\mu_i$, and define
\begin{equation}\label{eq:maximal-Gaussian-subspace}
	E=\left\{v\in\R^n:
		\int v\cdot\nabla\psi\,\d\mu_i
		=\kappa_i\int v\cdot(x-m_i)\psi\,\d\mu_i,\quad \forall\,\psi\in C_c^\infty(\R^n),\quad i=0,1\right\}.
\end{equation}
Then $E$ is a linear subspace. Moreover, under the orthogonal identification
$\R^n=E\oplus E^\perp$, there exist $\nu_i\in\mathcal P(E^\perp)$ such
that
\begin{equation}\label{eq:maximal-factor}
 \mu_i=N_E(P_E m_i,\kappa_i^{-1}I_E)\otimes\nu_i,
 \qquad i=0,1.
\end{equation}
Here $N_E$ denotes a Gaussian law on $E$.  If a subspace $F\subseteq
\R^n$ admits such a factorization for both endpoints, with respective
covariances $\kappa_i^{-1}I_F$, then $F\subseteq E$.  Thus $E$ is the
unique maximal common Gaussian subspace.  Moreover, equality in
Theorem~\ref{thm:main} at one (equivalently, every) time
$t\in(0,1)$ holds if and only if $E\neq\{0\}$.
\end{proposition}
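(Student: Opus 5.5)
Linearity of $E$ is immediate: the defining identity in \eqref{eq:maximal-Gaussian-subspace} is linear in $v$ for each fixed $\psi$ and $i$. The factorization comes from a multi-dimensional version of the characteristic-function argument in Step~4 of the proof of Theorem~\ref{thm:equality}. Fix $i$ and an orthonormal basis $e_1,\dots,e_k$ of $E$. Write $S_i=P_E(X_i-m_i)\in E$ and $Z_i=P_{E^\perp}(X_i-m_i)$, and set $\phi_i(\tau,\xi)=\mathbb E\,e^{\mathrm i(\tau\cdot S_i+\xi\cdot Z_i)}$ for $\tau\in E$ and $\xi\in E^\perp$. We apply \eqref{eq:maximal-Gaussian-subspace} with $v=e_l$ to the cut-off test functions $\psi_R$ used in Step~4 and let $R\to\infty$; finite second moments control the cut-off error. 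This gives
\[
\mathrm i\tau_l\phi_i=\kappa_i\,\mathrm i^{-1}\partial_{\tau_l}\phi_i,\qquad l=1,\dots,k,
\]
so $\phi_i(\tau,\xi)=e^{-|\tau|^2/(2\kappa_i)}\phi_i(0,\xi)$. Uniqueness of characteristic functions then shows that $S_i\sim N_E(0,\kappa_i^{-1}I_E)$ and that $S_i$ is independent of $Z_i$. Shifting back by $m_i$ yields \eqref{eq:maximal-factor}, with $\nu_i$ the law of $P_{E^\perp}X_i$.

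For maximality, suppose $F$ admits such a factorization for both endpoints, so $\mu_i=N_F(P_Fm_i,\kappa_i^{-1}I_F)\otimes\nu_i'$. Take $v\in F$ and $\psi\in C_c^\infty$. We integrate by parts in the Gaussian factor only, using Fubini in the $F$-variables with the $F^\perp$-variables frozen. The one-dimensional Gaussian identity $\int\partial_v\psi\,\d N=\kappa_i\int v\cdot(x-P_Fm_i)\psi\,\d N$ holds on each fiber. Since $v\cdot(x-m_i)=v\cdot(P_Fx-P_Fm_i)$ for $v\in F$, integrating over $\nu_i'$ gives the identity in \eqref{eq:maximal-Gaussian-subspace}, hence $v\in E$. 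So $F\subseteq E$, and uniqueness of the maximal subspace follows.

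For the equality criterion, suppose equality holds at some $t\in(0,1)$. Steps~1--3 of the proof of Theorem~\ref{thm:equality} produce a unit vector $e$ satisfying \eqref{eq:Gaussian-IBP} for $i=0,1$, which says exactly that $e\in E$, so $E\neq\{0\}$. Conversely, if $E\neq\{0\}$, pick a unit $e\in E$. The factorization \eqref{eq:maximal-factor}, restricted to the line $\R e$ (a product of Gaussians in $E$ splits off its $e$-coordinate), gives \eqref{eq:factor}. Theorem~\ref{thm:equality} then yields equality at every $s\in[0,1]$, and in particular at every interior time; this covers the ``one (equivalently, every)'' clause.

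The step needing the most care is the multi-direction characteristic-function computation. One must check that the cut-off terms vanish uniformly in the chosen basis directions and that the resulting system of ODEs in $\tau$ integrates to the stated product form. Both are routine given the one-direction argument already written.
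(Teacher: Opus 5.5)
Your proposal is correct and follows the paper's proof: linearity in $v$, the multi-directional version of the characteristic-function argument from Step~4, fiberwise Gaussian integration by parts for maximality, and reduction of the equality criterion to Theorem~\ref{thm:equality}. For that last step the paper just cites the theorem; you spell it out by reading $e\in E$ directly off \eqref{eq:Gaussian-IBP} and, for the converse, restricting the factorization to $\R e$, which is the same argument made explicit.
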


\begin{proof}
The defining identities in \eqref{eq:maximal-Gaussian-subspace} are
linear in $v$, so $E$ is a linear subspace. Fix $i\in\{0,1\}$ and set
$s=P_E(x-m_i)$ and $z=P_{E^\perp}(x-m_i)$. For
$\tau\in E$ and $\xi\in E^\perp$, let
$\phi_i(\tau,\xi)=\int e^{\,\mathrm{i}(\tau\cdot s+\xi\cdot z)}\,\d\mu_i$.
Applying \eqref{eq:maximal-Gaussian-subspace} to cut-off
approximations of the exponential, exactly as in the proof of
Theorem~\ref{thm:equality}, we obtain 
\begin{equation}
	v\cdot\nabla_\tau\phi_i(\tau,\xi)
	=-\kappa_i^{-1}(v\cdot\tau)\phi_i(\tau,\xi),\qquad \text{for every $v\in E$},
\end{equation}
which implies
$\nabla_\tau\phi_i(\tau,\xi)=-\kappa_i^{-1}\tau\phi_i(\tau,\xi)$, and hence  $ \phi_i(\tau,\xi)
=e^{-|\tau|^2/(2\kappa_i)}\phi_i(0,\xi)$. By uniqueness of characteristic functions, this factorization shows that
$s\sim N_E(0,\kappa_i^{-1}I_E)$ and is independent of $z$; after
recentering, this yields \eqref{eq:maximal-factor}. 

Conversely, if both endpoints split off Gaussian factors on a subspace
$F$ with covariances $\kappa_i^{-1}I_F$, then Gaussian integration by
parts shows that every $v\in F$ satisfies
\eqref{eq:maximal-Gaussian-subspace}. Hence $F\subseteq E$, so $E$ is
the unique maximal common Gaussian subspace. Finally,
Theorem~\ref{thm:equality} shows that equality at an interior time is
equivalent to $E\neq\{0\}$. 
\end{proof}

Proposition~\ref{prop:maximal-Gaussian-factor} gives the following
geometric description in the extended-valued case. For a closed convex
set $K$, let $\operatorname{Lin}(K):=\{v:K+\R v=K\}$ denote its lineality space. 
\begin{corollary}[Conditioned Gaussian measures]
\label{cor:conditioned-gaussians}
Let $K_i\subseteq\R^n$ be convex Borel sets with nonempty interior, let
$\kappa_i>0$, and let $\mu_i$ be the probability measures with densities
\[
 \d\mu_i(x)
 \propto e^{-\kappa_i|x|^2/2}\mathbf1_{K_i}(x)\,\d x,
 \qquad i=0,1.
\]
Then their quadratic Wasserstein interpolation satisfies
\begin{equation}
	 C_P(\mu_t)
	\leq\left(\frac{1-t}{\sqrt{\kappa_0}}
	+\frac{t}{\sqrt{\kappa_1}}\right)^2,
	\qquad 0\leq t\leq1. 
\end{equation}
Equality holds for some $t\in(0,1)$ if and only if
\begin{equation}
	\operatorname{Lin}(\bar K_0)
	\cap\operatorname{Lin}(\bar K_1)\neq\{0\}.
\end{equation}
In that case equality holds for every $t\in[0,1]$.
In particular, if either $K_0$ or $K_1$ is bounded, then
\begin{equation}
	 C_P(\mu_t)
	<\left(\frac{1-t}{\sqrt{\kappa_0}}
	+\frac{t}{\sqrt{\kappa_1}}\right)^2,
	\qquad\text{for every }t\in(0,1). 
\end{equation}
\end{corollary}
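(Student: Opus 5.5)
The plan is to reduce everything to Theorem~\ref{thm:main} and Proposition~\ref{prop:maximal-Gaussian-factor}. The potentials are $V_i=\kappa_i|x|^2/2+\iota_{K_i}+c_i$, where $\iota_{K_i}$ is the convex indicator. To make $V_i$ lower semicontinuous we replace $K_i$ by $\bar K_i$; this changes $\mu_i$ only on the boundary of a convex set with nonempty interior, which is Lebesgue-null. Then $V_i-\kappa_i|x|^2/2$ is convex, proper and lower semicontinuous, so $\mu_i$ is $\kappa_i$-strongly log-concave and Theorem~\ref{thm:main} gives the bound. By Theorem~\ref{thm:equality} and Proposition~\ref{prop:maximal-Gaussian-factor}, equality at some (equivalently every) interior time holds iff the subspace $E$ of common Gaussian directions is nontrivial. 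It therefore suffices to prove that $e\in E$ (with $e$ a unit vector) iff $e\in\operatorname{Lin}(\bar K_0)\cap\operatorname{Lin}(\bar K_1)$.

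\emph{Sufficiency.} Suppose $e\in\operatorname{Lin}(\bar K_i)$ for $i=0,1$. Then $\bar K_i=\R e\oplus C_i$ with $C_i=\bar K_i\cap e^\perp$ closed, convex and of nonempty relative interior. The density $e^{-\kappa_i|x|^2/2}\mathbf 1_{\bar K_i}$ factorizes as $e^{-\kappa_i s^2/2}\cdot e^{-\kappa_i|z|^2/2}\mathbf 1_{C_i}(z)$ in the coordinates $x=se+z$. Hence $(\mathcal C_e)_\#\mu_i=N(0,\kappa_i^{-1})\otimes\nu_i$. By symmetry the barycenter component is $\ip{e}{m_i}=0$, so this is exactly \eqref{eq:factor}, and Theorem~\ref{thm:equality} gives equality for every $s\in[0,1]$.

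\emph{Necessity.} Suppose $(\mathcal C_e)_\#\mu_i=N(\ip{e}{m_i},\kappa_i^{-1})\otimes\nu_i$. Then the support of $\mu_i$, which is $\bar K_i$ (the closure of the interior of a convex body), equals $\R e\oplus\operatorname{supp}\nu_i$ up to the identification, because the Gaussian factor has full support on $\R e$. So $\bar K_i+\R e=\bar K_i$ and $e\in\operatorname{Lin}(\bar K_0)\cap\operatorname{Lin}(\bar K_1)$.

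The step needing the most care is the identification $\operatorname{supp}\mu_i=\bar K_i$. The density is positive on the interior of $K_i$, and that interior is nonempty and dense in $\bar K_i$ by convexity. Any support identity then transfers to $\bar K_i$, and the product structure yields $\operatorname{supp}(\text{product})=\R e\times\operatorname{supp}\nu_i$.

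For the final claim: if $K_0$ or $K_1$ is bounded, then its closure has trivial lineality space. The intersection is then $\{0\}$, so equality fails at every $t\in(0,1)$, and the strict inequality follows from the bound already proved.
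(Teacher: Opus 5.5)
Your proposal is correct and follows essentially the same route as the paper. You close the sets and invoke Theorem~\ref{thm:main} for the bound. For sufficiency you factor the Gaussian density along a common lineality direction and apply the converse part of Theorem~\ref{thm:equality}. For necessity you read off $\R e$-invariance of $\operatorname{supp}\mu_i=\bar K_i$ from the product structure in \eqref{eq:factor}. Framing the equivalence through the subspace $E$ of Proposition~\ref{prop:maximal-Gaussian-factor} is only a cosmetic difference, since the argument you actually run is the one-direction factorization, exactly as in the paper.
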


\begin{proof}
Since the boundary of a convex set with nonempty interior has
Lebesgue measure zero, replacing $K_i$ by $\bar K_i$ does not change $\mu_i$.
 We may assume that both sets are closed. Up to additive constants, the extended-valued endpoint potentials are
$V_i(x)=\frac{\kappa_i}{2}|x|^2+\iota_{K_i}(x)$,
where $\iota_{K_i}$ is zero on $K_i$ and $+\infty$ on $K_i^c$.  Hence
Theorem~\ref{thm:main} implies the asserted inequality.

Suppose first that the two lineality spaces contain a common unit
vector $e$.  Denote $K_i'=K_i\cap e^\perp$.  We claim that $K_i=\R e+K_i'$.
Indeed, $K_i+\R e=K_i$, and subtracting the $e$-component of any
point of $K_i$ leaves a point of $K_i'$.  The Gaussian density
therefore factorizes under $\R^n=\R e\oplus e^\perp$, and hence
$\mu_i=N(0,\kappa_i^{-1})\otimes\nu_i$, where $\nu_i$ is the
corresponding conditioned Gaussian measure on $K_i'$. The converse
part of Theorem~\ref{thm:equality} yields equality at
every time.

Conversely, by Theorem~\ref{thm:equality}, equality at an interior time gives a common Gaussian line
factor.  Since the density of $\mu_i$ is positive
on $\operatorname{int}K_i$ and
$K_i=\operatorname{cl}(\operatorname{int}K_i)$, its support is $K_i$. On the
other hand, the support of the product in \eqref{eq:factor} has the
form $\R e+\operatorname{supp}\nu_i$. It is invariant under translation
by $\R e$, and therefore
$e\in\operatorname{Lin}(K_i)$ for both endpoints.  Finally, a bounded
convex set has trivial lineality space, which proves the strict
inequality in the last assertion.
\end{proof}

\section{Gaussian entropy and functional inequalities}\label{sec:Gaussian-applications}
We now apply the Poincar\'e interpolation theorem to the second variation
of Gaussian relative entropy.
Throughout this section, $\gamma$ denotes the standard Gaussian measure,
and $\mathcal P_2(\R^n)$ denotes the probability measures on $\R^n$
with finite second moment.  We write
\[
L=\Delta-x\cdot\nabla,
\qquad
\Gamma_2(u)=\norm{\Hess{u}}_{\HS}^2+|\nabla u|^2
\]
for the Ornstein--Uhlenbeck generator and its iterated carr\'e du champ,
respectively. We also write $P_s=e^{sL}$ for the associated Markov
semigroup on functions and $P_s^*$ for its dual action on probability
measures.

All the entropies appearing below are finite.  Indeed, relative to
$\gamma$, a $1$-strongly log-concave endpoint has density
$e^{-W}$ with $W$ convex, possibly extended-valued. Since $W$ is
bounded below by an affine function, its negative part
grows at most linearly, while
$s e^{-s}\leq e^{-1}$ for $s\geq0$ controls its positive part.
Thus $\Ent_\gamma(\mu_i)<\infty$.  Moreover,
\[
\Ent_\gamma(\mu)
=
\Ent_{\mathcal L^n}(\mu)
+\frac12\int|x|^2\,\d\mu
+\frac n2\log(2\pi).
\]
The second moment remains finite along the Wasserstein geodesic, and
$\Ent_{\mathcal L^n}$ is displacement convex
\cite{McCannConvexity}.  Hence
$\Ent_\gamma(\mu_t)<\infty$ for every $t\in[0,1]$.

When the mean is zero, the estimate below follows the argument in
\cite[Theorem~4.6]{AishwaryaRotem}, which subtracts a quadratic
polynomial.  In
Lemma~\ref{lem:mean-Gamma2}, we separate the translation component.  This
removes the zero-average and symmetry assumptions. The local-to-global
comparison and the subsequent functional estimates follow the arguments used by Aishwarya and Rotem
\cite[Sections~4--5]{AishwaryaRotem} and by Aishwarya and Li
\cite[proof of Theorem~1.7]{AishwaryaLi} to derive functional
inequalities from entropy. We prove the changes required by the translation
component and cite each unchanged step at the point where it is used.

\begin{lemma}[$\Gamma_2$ estimate with the mean separated]
\label{lem:mean-Gamma2}
Let $\mu\in\mathcal P_2(\R^n)$ satisfy $C_P(\mu)\leq1$, and let
$u\in C^\infty(\R^n)$ satisfy
$\nabla u,\Hess{u}\in L^2(\mu)$. Denote
\[
 m=\int x\,\d\mu,
 \qquad
 a=\int\nabla u\,\d\mu,
 \qquad
 \ell=\int Lu\,\d\mu. 
\]
Then
\begin{equation}\label{eq:mean-Gamma2}
 \int\Gamma_2(u)\,\d\mu
 \geq
 2\int|\nabla u|^2\,\d\mu-|a|^2
 +\frac{(\ell+m\cdot a)^2}{n}+\frac{(\ell+m\cdot a)^2}{n^2}\int|x-m|^2\,\d\mu. 
\end{equation}
\end{lemma}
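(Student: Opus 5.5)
The proof uses a single scalar Poincar\'e inequality, applied componentwise to a shifted gradient. This is the device of subtracting a quadratic polynomial from $u$, as in \cite[Theorem~4.6]{AishwaryaRotem}, except that the quadratic is centered at the barycenter $m$ rather than at the origin. Fix a constant $c\in\R$, to be chosen later. For each $k=1,\dots,n$, consider
\[
w_k=\partial_ku-c(x_k-m_k).
\]
These functions lie in $H^1(\mu)$ because $\nabla u,\Hess u\in L^2(\mu)$ and $\mu\in\mathcal P_2$. I will first justify that $C_P(\mu)\le1$ extends from $C_c^\infty$ to such functions. The route is a cutoff-and-mollification approximation, as in the cutoff argument \eqref{eq:linear-cutoff-convergence}. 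Since $\int(x-m)\,\d\mu=0$, the mean of $w_k$ is $a_k$. Applying the Poincar\'e inequality to each $w_k$ and summing over $k$ gives
\[
\int|\nabla u-c(x-m)|^2\,\d\mu-|a|^2\le\int\norm{\Hess u-cI}_{\HS}^2\,\d\mu .
\]

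Next I expand both sides. Set $M=\int|x-m|^2\,\d\mu$. The left side is
\[
\int|\nabla u|^2\,\d\mu-2c\int\nabla u\cdot(x-m)\,\d\mu+c^2M-|a|^2 .
\]
The right side is
\[
\int\norm{\Hess u}_{\HS}^2\,\d\mu-2c\int\Delta u\,\d\mu+c^2n .
\]
The key identity uses $\Delta u=Lu+x\cdot\nabla u$ and $\int\nabla u\cdot(x-m)\,\d\mu=\int x\cdot\nabla u\,\d\mu-m\cdot a$. Together these give
\[
\int\Delta u\,\d\mu-\int\nabla u\cdot(x-m)\,\d\mu=\ell+m\cdot a=:A .
\]
All these integrals are finite: $\Delta u\in L^2(\mu)$, and $x\cdot\nabla u\in L^1(\mu)$ by Cauchy--Schwarz with the finite second moment. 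In particular $\ell$ is well defined. Rearranging yields
\[
\int\norm{\Hess u}_{\HS}^2\,\d\mu\ge\int|\nabla u|^2\,\d\mu-|a|^2+2cA+c^2(M-n).
\]

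Finally, I add $\int|\nabla u|^2\,\d\mu$ to both sides to produce $\int\Gamma_2(u)\,\d\mu$ on the left. Then I choose $c=A/n$, which gives
\[
2cA+c^2(M-n)=\frac{2A^2}{n}+\frac{A^2(M-n)}{n^2}=\frac{A^2}{n}+\frac{A^2M}{n^2}.
\]
This is exactly the right-hand side of \eqref{eq:mean-Gamma2}.

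The fact $M\le n$ follows from $C_P(\mu)\le1$ applied to linear functions, and it shows that $c=A/n$ is a sensible choice: the quadratic $2cA+c^2(M-n)$ is concave in $c$. The inequality itself holds for any $c$, so $M\le n$ is not logically needed.

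The only genuinely delicate point is the approximation step, namely extending the Poincar\'e inequality to the non-compactly supported functions $w_k$. I would handle it by multiplying by cutoffs $\chi(x/R)$ and using dominated convergence with the $L^2$ and second-moment hypotheses, exactly as in \eqref{eq:linear-cutoff-convergence}.
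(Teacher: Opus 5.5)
Your proof is correct and is essentially the paper's argument: you apply the Poincar\'e inequality componentwise to the gradient shifted by a quadratic centered at $m$, with the same coefficient $c=(\ell+m\cdot a)/n$, and justify this by the same radial cutoffs. The only difference is cosmetic: you absorb the mean $a$ directly through the variance $\int w_k^2\,\d\mu-a_k^2$, whereas the paper first proves the case $a=0$ and then reduces to it via $u\mapsto u-a\cdot x$. Since $u$ is already smooth, the mollification you mention is not needed; cutoffs alone suffice.
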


\begin{proof}
The proof follows \cite[Theorem~4.6]{AishwaryaRotem}; we give the
modifications involving $m$ and $a$.
First suppose that $a=0$. Set $\lambda=\ell/n$ and define
$v(x)=u(x)-\frac{\lambda}{2}|x-m|^2$.
Then $\int\nabla v\,\d\mu=0$. The derivative $\partial_j v$ need not be
compactly supported.  We therefore apply the Poincar\'e inequality to
$\eta_R\partial_j v$, where $\eta_R$ is a standard radial cut-off, and
then let $R\to\infty$.  This is the same cut-off argument used in the
proof of Theorem~\ref{thm:equality}. The error
terms vanish since $\nabla v,\Hess{v}\in L^2(\mu)$. Summing over
$j=1,\ldots,n$, gives
\begin{equation}\label{eq:component-Poincare}
	\int\norm{\Hess{v}}_{\HS}^2\,\d\mu
	\geq\int|\nabla v|^2\,\d\mu.
\end{equation}
 Expanding \eqref{eq:component-Poincare} and using
$\nabla v=\nabla u-\lambda(x-m)$ and $\Hess{v}=\Hess{u}-\lambda I$, we obtain
\begin{equation}\label{eq:mean-Hessian-expansion}
	\begin{aligned}
	\int\norm{\Hess{u}}_{\HS}^2\,\d\mu
	-2\lambda\int\Delta u\,\d\mu+n\lambda^2
	&\geq
	\int|\nabla u|^2\,\d\mu
	\\
	&\quad-2\lambda\int(x-m)\cdot\nabla u\,\d\mu
	+\lambda^2\int|x-m|^2\,\d\mu.
	\end{aligned}
\end{equation}
Since $a=0$, $ \int\bigl(\Delta u-(x-m)\cdot\nabla u\bigr)\,\d\mu
=\ell$. Thus,
\begin{equation}
\begin{aligned}
 \int\norm{\Hess{u}}_{\HS}^2\,\d\mu
 &\geq
 \int|\nabla u|^2\,\d\mu
 +2\lambda\ell-n\lambda^2+\lambda^2\int|x-m|^2\,\d\mu\\
 &=
 \int|\nabla u|^2\,\d\mu
 +\frac{\ell^2}{n}+\frac{\ell^2}{n^2}\int|x-m|^2\,\d\mu, 
\end{aligned}
\end{equation}
which implies \eqref{eq:mean-Gamma2} when $a=0$.

For general $a$, apply the case $a=0$ to $w(x)=u(x)-a\cdot x$. Then
\begin{equation}
	\int\nabla w\,\d\mu=0,
	\qquad
	\int Lw\,\d\mu=\ell+m\cdot a,
	\qquad
	\int|\nabla w|^2\,\d\mu
	=\int|\nabla u|^2\,\d\mu-|a|^2.
\end{equation}
Since $\Hess{w}=\Hess{u}$,
\begin{equation}
	\int\Gamma_2(w)\,\d\mu
	=
	\int\Gamma_2(u)\,\d\mu-|a|^2. 
\end{equation}
Applying the estimate for the case $a=0$ to $w$ then yields
\eqref{eq:mean-Gamma2}.
\end{proof}

\begin{proof}[Proof of Theorem~\ref{thm:centered-entropy}]
\medskip\noindent\textbf{Step 1: Entropy variations.}
First assume that the endpoint potentials are smooth with two-sided
Hessian bounds.  Let $\nabla\theta_t$ be the velocity of the
constant-speed geodesic, so
\begin{equation}
	\partial_t\mu_t+\diver(\mu_t\nabla\theta_t)=0,
	\qquad
	\partial_t\theta_t+\frac12|\nabla\theta_t|^2=0.
\end{equation}

Write $\mu_t=\rho_t\gamma$ and set
$E_t:=\Ent_\gamma(\mu_t)=\int\rho_t\log\rho_t\,\d\gamma$.
The first- and second-variation formulas in
\cite[Lemma~4.3]{AishwaryaRotem} yield
\begin{equation}\label{eq:entropy-first-variation}
	E_t'=\int\nabla\log\rho_t\cdot\nabla\theta_t\,\d\mu_t=\int\nabla\rho_t\cdot\nabla\theta_t\,\d\gamma
=-\int L\theta_t\,\d\mu_t,\qquad  E_t''=\int\Gamma_2(\theta_t)\,\d\mu_t, 
\end{equation}
where the assumed regularity and Gaussian tails justify the differentiations
and integrations by parts above via standard radial cutoffs.

\medskip\noindent\textbf{Step 2: Separating the translation component.}
Set $m_t=\int x\,\d\mu_t$ and $a=m_1-m_0$. The continuity equation and
the constant-speed property of the geodesic imply
\begin{equation}\label{eq:geodesic-moments}
 m_t=(1-t)m_0+tm_1,
 \qquad
 \int\nabla\theta_t\,\d\mu_t=m_t'=a,
 \qquad
 \int|\nabla\theta_t|^2\,\d\mu_t=W_2^2(\mu_0,\mu_1).
\end{equation}
Here $a$ is precisely the translation velocity. Set
$W_{\rm c}^2=W_2^2(\mu_0,\mu_1)-|a|^2$. Since both endpoints are
$1$-strongly log-concave, Theorem~\ref{thm:main} gives
$C_P(\mu_t)\leq1$. By \eqref{eq:entropy-first-variation} and
\eqref{eq:geodesic-moments}, the function
$\mathcal D_t=E_t-|m_t|^2/2$ satisfies
\begin{equation}
	 \mathcal D_t'=E_t'-m_t\cdot a,\qquad  \mathcal D_t''=E_t''-|a|^2,
\end{equation}
and 
\begin{equation}
	 \int L\theta_t\,\d\mu_t+m_t\cdot a
	=-\mathcal D_t',\qquad \int|\nabla\theta_t|^2\,\d\mu_t-|a|^2
	=W_{\rm c}^2. 
\end{equation}
Applying Lemma~\ref{lem:mean-Gamma2} to $\theta_t$, we obtain
\begin{equation}\label{eq:centered-entropy-refined}
	\mathcal D_t''
	\geq
	2W_{\rm c}^2+\frac{(\mathcal D_t')^2}{n}
	+\frac{(\mathcal D_t')^2}{n^2}\int|x-m_t|^2\,\d\mu_t, 
\end{equation}
which implies \eqref{eq:intro-entropy-differential}. 

\medskip\noindent\textbf{Step 3: Global comparison.}
For $i=0,1$, let $\bar\mu_i=(x\mapsto x-m_i)_\#\mu_i$
be the centered translate of $\mu_i$. For every coupling $\pi$ of the endpoints, 
\begin{equation}\label{eq:centered-cost-decomposition}
	 \int|x-y|^2\,\d\pi
	=|m_1-m_0|^2
	+\int|(x-m_0)-(y-m_1)|^2\,\d\pi, 
\end{equation}
which implies $W_{\rm c}=W_2(\bar\mu_0,\bar\mu_1)$. Translating the
optimal transport plan also yields the centered interpolation
$\bar\mu_t=(x\mapsto x-m_t)_\#\mu_t$.  The Brascamp--Lieb inequality yields $\Cov(\bar\mu_i)\preceq I$.  If $\bar X_0$ and $\bar X_1$ are
independent with these laws, then
\begin{equation}
	W_{\rm c}^2
	\leq\mathbb E|\bar X_0-\bar X_1|^2
	=\tr\Cov(\bar\mu_0)+\tr\Cov(\bar\mu_1)
	\leq2n. 
\end{equation}
Thus $\omega=\sqrt{2/n}\,W_{\rm c}\leq2<\pi$. Setting
$U_t=e^{-\mathcal D_t/n}$, \eqref{eq:centered-entropy-refined} gives
\begin{equation}\label{eq:entropy-ODE}
	U_t''+\omega^2U_t\leq0. 
\end{equation}
The one-dimensional comparison principle then yields
\eqref{eq:intro-entropy-distortion}. This is the standard
local-to-global comparison for $(2,n)$-convexity; see
\cite[Lemma~5.3]{AishwaryaRotem} and
\cite[Lemma~2.2]{ErbarKuwadaSturm}. 

\medskip\noindent\textbf{Step 4: Equality in the smooth case.}
Keeping the last term in \eqref{eq:centered-entropy-refined} yields
\begin{equation}\label{eq:entropy-gap-forcing}
	-U_t''-\omega^2U_t
	\geq
	\frac{U_t(\mathcal D_t')^2}{n^3}\int|x-m_t|^2\,\d\mu_t. 
\end{equation}

For $0\leq\omega<\pi$, the Dirichlet Green kernel $G_\omega$ of
$-\frac{\d^2}{\d t^2}-\omega^2$ on $(0,1)$ is given by 
\begin{equation}
	G_\omega(t,s)
	=
	\frac{\sin(\omega t)\sin(\omega(1-s))}
	{\omega\sin\omega},\qquad 0<t\leq s<1, 
\end{equation}
and $G_\omega(t,s)=G_\omega(s,t)$ when $s<t$. At $\omega=0$, the
right-hand side is understood by continuity and equals $t(1-s)$.
In particular, $G_\omega(t,s)>0$ for all $t,s\in(0,1)$. 

Since the sine interpolation below solves the homogeneous equation
$v''+\omega^2v=0$ and agrees with $U$ at $t=0,1$, the Green
representation yields, for every $t_0\in(0,1)$, 
\begin{equation}\label{eq:entropy-Green-representation}
		U_{t_0}-\frac{\sin((1-t_0)\omega)}{\sin\omega}U_0
	-\frac{\sin(t_0\omega)}{\sin\omega}U_1=
	\int_0^1G_\omega(t_0,s)
	\bigl(-U_s''-\omega^2U_s\bigr)\,\d s
	\geq0. 
\end{equation}

Suppose that equality holds at some $t_0\in(0,1)$. Since
$G_\omega(t_0,s)>0$,
\begin{equation}
	-U_s''-\omega^2U_s=0,
	\qquad \text{for almost every }s\in(0,1). 
\end{equation}
Since $\mu_s$ is not a point mass, $\int|x-m_s|^2\,\d\mu_s>0$. 
Combining with \eqref{eq:entropy-gap-forcing} yields 
$\mathcal D_s'=0$ for almost every $s\in(0,1)$. Under the present
smoothness assumptions, $\mathcal D'$ is continuous, and hence
$\mathcal D_s'=0$ for every $s\in(0,1)$.  It follows from
\eqref{eq:centered-entropy-refined} that $W_{\rm c}=0$.  Hence $\mu_1$
is a translate of $\mu_0$.
Conversely, if $\mu_1$ is a translate of $\mu_0$, then the centered
geodesic is constant. Hence $W_{\rm c}=0$ and $\mathcal D_t$ is
constant, so equality holds for every $t\in[0,1]$. 

\medskip\noindent\textbf{Step 5: Regularization. }
It remains to remove the smoothness assumptions. Denote 
$s_\varepsilon=\frac12\log(1+\varepsilon)$ and define
$\bar\mu_i^\varepsilon=P_{s_\varepsilon}^*\bar\mu_i$.
By Lemma~\ref{lem:regularization}, the measures
$\bar\mu_i^\varepsilon$ are centered, smooth, $1$-strongly
log-concave and $\bar\mu_i^\varepsilon\rightarrow\bar\mu_i$ in $W_2$. Let $(\bar\mu_t^\varepsilon)_{t\in[0,1]}$ denote their quadratic Wasserstein geodesic, set $W_\varepsilon
:=W_2(\bar\mu_0^\varepsilon,\bar\mu_1^\varepsilon)$ and $\omega_\varepsilon
:=\sqrt{2/n}\,W_\varepsilon$. 

Since $P_{s_\varepsilon}^*\gamma=\gamma$, the data processing
inequality yields $\Ent_\gamma(\bar\mu_i^\varepsilon)
\leq \Ent_\gamma(\bar\mu_i)$. 
On the other hand, lower semicontinuity of relative entropy gives
$\Ent_\gamma(\bar\mu_i)
\leq
\liminf_{\varepsilon\to0}
\Ent_\gamma(\bar\mu_i^\varepsilon)$; this follows directly from the
Donsker--Varadhan variational formula \cite[Section~2]{DonskerVaradhan}.
Hence $\Ent_\gamma(\bar\mu_i^\varepsilon)
\longrightarrow
\Ent_\gamma(\bar\mu_i)$. Stability of optimal transport plans, as in
the proof of Theorem~\ref{thm:main}, gives
$\bar\mu_t^\varepsilon\to\bar\mu_t$ in $W_2$ and
$W_\varepsilon\to W_{\rm c}$.  Apply the smooth inequality to
$(\bar\mu_t^\varepsilon)_{t\in[0,1]}$.  The endpoint entropies converge,
and entropy is lower semicontinuous at time $t$.  We may therefore pass
to the limit and obtain the same inequality for
$\Ent_\gamma(\bar\mu_t)$. Since
\begin{equation}
	\Ent_\gamma(\bar\mu_t)
	=
	\Ent_\gamma(\mu_t)-\frac12|m_t|^2
	=\mathcal D_t,
\end{equation}
this is precisely \eqref{eq:intro-entropy-distortion}. 

For the equality case, set
\[
 U_s^\varepsilon
 :=e^{-\Ent_\gamma(\bar\mu_s^\varepsilon)/n}
\]
and let $S_\varepsilon(t_0)$ denote the sine interpolation of
$U_0^\varepsilon$ and $U_1^\varepsilon$ with frequency
$\omega_\varepsilon$. Thus the gap for the regularized geodesic is
$\Delta_\varepsilon(t_0)=U_{t_0}^\varepsilon-S_\varepsilon(t_0)\geq0$.
The convergence of the endpoint entropies and of $\omega_\varepsilon$
gives $S_\varepsilon(t_0)\to S(t_0)$, the corresponding sine
interpolation for the limiting endpoints. Lower semicontinuity at the
interior time gives
\begin{equation}
	 \Ent_\gamma(\bar\mu_{t_0})
	\leq\liminf_{\varepsilon\to0}
	\Ent_\gamma(\bar\mu_{t_0}^\varepsilon),
	\qquad
	U_{t_0}\geq\limsup_{\varepsilon\to0}U_{t_0}^\varepsilon. 
\end{equation}
If equality holds at $t_0\in(0,1)$ for the limiting geodesic, then
$U_{t_0}=S(t_0)$, and consequently
\begin{equation}
	0\leq\limsup_{\varepsilon\to0}\Delta_\varepsilon(t_0)
	\leq U_{t_0}-S(t_0)=0. 
\end{equation}
Hence $\Delta_\varepsilon(t_0)\to0$. Let
$M_s^\varepsilon=\int|x|^2\,\d\bar\mu_s^\varepsilon$. Since
$M_i^\varepsilon\to \int|x|^2\,\d\bar\mu_i>0$ for $i=0,1$, every
sufficiently small $\varepsilon>0$ satisfies
\begin{equation}
	\begin{aligned}
		M_s^\varepsilon
		=
		(1-s)M_0^\varepsilon+sM_1^\varepsilon
		-s(1-s)W_\varepsilon^2
		\geq
		(1-s)^2M_0^\varepsilon+s^2M_1^\varepsilon>0, \qquad \frac{1}{3}\leq s\leq \frac{2}{3}. 
	\end{aligned}
\end{equation}
Combining \eqref{eq:entropy-gap-forcing} and
\eqref{eq:entropy-Green-representation}, we obtain
\begin{equation}
	\begin{aligned}
		\Delta_\varepsilon(t_0)&\geq \frac{1}{n^3}\int_0^1
		G_{\omega_\varepsilon}(t_0,s) e^{-\Ent_\gamma(\bar\mu_s^\varepsilon)/n}M_s^\varepsilon 
		\left|\frac{\d}{\d s} \Ent_\gamma(\bar\mu_s^\varepsilon)\right|^2\,\d s\\
		&\geq C \int_{1/3}^{2/3} \left|\frac{\d}{\d s} \Ent_\gamma(\bar\mu_s^\varepsilon)\right|^2\,\d s\\
		&\geq \frac{3C}{2}\int_{1/3}^{2/3} \int_{1/3}^{2/3} 
		\left|\frac{\d}{\d s} \Ent_\gamma(\bar\mu_s^\varepsilon)-\frac{\d}{\d r} \Ent_\gamma(\bar\mu_r^\varepsilon)\right|^2\,\d s\,\d r  
		\overset{\eqref{eq:centered-entropy-refined}}{\geq} \frac{C}{81} W_\varepsilon^4, 
	\end{aligned}
\end{equation}
where $C$ is independent of all sufficiently small $\varepsilon$. Since $\Delta_\varepsilon(t_0)\rightarrow0$ and $W_\varepsilon\to W_{\rm c}$, we obtain $W_{\rm c}=0$ and then $\mu_1$ is a translate of $\mu_0$. The converse proved above does not require smoothness. 

Finally, if $m_0=m_1$, then $m_t=m_0$,
$W_{\rm c}=W_2(\mu_0,\mu_1)$, and $\mathcal D_t
=
\Ent_\gamma(\mu_t)-\frac{1}{2}|m_0|^2$. 
Hence the same conclusions hold with
$\Ent_\gamma(\mu_t)$ in place of $\mathcal D_t$. In the equality case, the translation
vector must vanish, and hence $\mu_0=\mu_1$.
\end{proof}

We next turn to the functional application.  For a fixed $t\in(0,1)$,
$a,b\geq0$, and $p\in[0,\infty]$, define the weighted power mean
\[
 M_p^t(a,b)=
 \begin{cases}
 ((1-t)a^p+tb^p)^{1/p},&ab>0,\\
 0,&ab=0,
 \end{cases}
\]
with the usual limiting interpretations at $p=0$ and $p=\infty$.

\begin{theorem}[Gaussian BBL inequality with barycenter terms]
\label{thm:barycenter-BBL}
Fix $t\in(0,1)$ and $p\in[0,\infty]$. Let $f,g,h\geq0$ be
$\gamma$-integrable, with $f$ and $g$ log-concave, and set
$A=\int f\,\d\gamma>0$ and $B=\int g\,\d\gamma>0$.
Assume that
\[
 h((1-t)x+ty)\geq M_p^t(f(x),g(y))
 \qquad \text{for every }x,y\in\R^n.
\]
Define $\d\mu_f=\frac fA\,\d\gamma$ and 
$\d\mu_g=\frac gB\,\d\gamma$,
and let $b_f,b_g$ be the barycenters of $\mu_f,\mu_g$, respectively.
Denote $b_t=(1-t)b_f+tb_g$ and $r=p/(1+np)$, with $r=1/n$ when
$p=\infty$.  Then
\begin{equation}\label{eq:barycenter-BBL}
 \int h\,\d\gamma
 \geq e^{-|b_t|^2/2}
 M_r^t\!\left(Ae^{|b_f|^2/2},Be^{|b_g|^2/2}\right).
\end{equation}
In particular, if $b_f=b_g$, then
\begin{equation}\label{eq:same-barycenter-BBL}
 \int h\,\d\gamma\geq M_r^t(A,B).
\end{equation}
\end{theorem}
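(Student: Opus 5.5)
We derive the inequality from Theorem~\ref{thm:centered-entropy}, following the entropy-to-functional route of Aishwarya and Li \cite[proof of Theorem~1.7]{AishwaryaLi}, with $\mathcal D$ in place of $\Ent_\gamma$. Since $f,g$ are log-concave, $\mu_f$ and $\mu_g$ have potentials $|x|^2/2-\log f+\text{const}$, so they are $1$-strongly log-concave in the extended-valued sense. Let $(\mu_t)$ be their quadratic Wasserstein geodesic, with optimal plan $\pi$ and Brenier map $T$. Then $b_{\mu_t}=b_t$. Write $\d\mu_t=\rho_t\,\d\gamma$ and $F_t=(1-t)I+tT$. We may assume $\int h\,\d\gamma<\infty$; the trivial case $ab=0$ of $M_p^t$ is excluded on the support of $\pi$, since $f>0$ $\mu_f$-a.e. and $g>0$ $\mu_g$-a.e.

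\textbf{Step 1: Jacobian and density along the transport.} Use the Monge--Amp\`ere equation for the Gaussian reference measure. With $J_t=\det DF_t$ in the Alexandrov sense, $\mu_f$-a.e.,
\[
\rho_t(F_t x)\,e^{-|F_tx|^2/2}J_t(x)=\rho_0(x)e^{-|x|^2/2},\qquad \rho_1(Tx)\,e^{-|Tx|^2/2}\det DT(x)=\rho_0(x)e^{-|x|^2/2}.
\]
The hypothesis gives $h(F_tx)\geq M_p^t\bigl(A\rho_0(x),B\rho_1(Tx)\bigr)$. The pointwise part of the Aishwarya--Li argument combines this with the arithmetic--geometric (Minkowski) determinant inequality $J_t^{1/n}\geq(1-t)+t(\det DT)^{1/n}$ and the Gaussian weight. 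That weight produces an exponent $-\tfrac12\bigl(|F_tx|^2-(1-t)|x|^2-t|Tx|^2\bigr)=\tfrac{t(1-t)}2|x-Tx|^2$. Integrating against $\mu_t$ and using Jensen/H\"older, the generalized Borell--Brascamp--Lieb pattern with $r=p/(1+np)$, one obtains a lower bound for $\int h\,\d\gamma\geq\int_{\{\rho_t>0\}} h\,\rho_t^{-1}\,\d\mu_t$ in terms of $A,B$, the endpoint entropies, and $W_2^2(\mu_0,\mu_1)$.

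\textbf{Step 2: Use of the entropy inequality.} Rather than redoing the pointwise computation, I would reduce to the case $p=\infty$, $r=1/n$, which is the sharpest case, and then use the standard reduction of general $p$ via the homogeneity trick of Aishwarya--Li. The $p=\infty$ reduction comes from applying Theorem~\ref{thm:centered-entropy} to $\mu_f,\mu_g$. Entropy is convex, so $-\Ent_\gamma(\mu_t)\leq\log\int_{\{\rho_t>0\}}\,\d\gamma$ type bounds hold. More precisely, by Jensen,
\[
\log\int h\,\d\gamma\geq\int\log\frac h{\rho_t}\,\d\mu_t=\int\log h\,\d\mu_t-\Ent_\gamma(\mu_t).
\]
In the support-set version ($f,g,h$ indicators, giving Corollary~\ref{cor:barycenter-BM}), $\int\log h\,\d\mu_t=\log 1$ and the distortion inequality, with sine coefficients dominating the affine ones (since $\sin((1-t)\omega)/\sin\omega\geq1-t$ for $\omega<\pi$), yields $e^{-\mathcal D_t/n}\geq(1-t)e^{-\mathcal D_0/n}+te^{-\mathcal D_1/n}$. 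Rewriting $\mathcal D=\Ent_\gamma-|b|^2/2$ produces exactly the factors $e^{|b|^2/(2n)}$.

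For general functions I would follow \cite[proof of Theorem~1.7]{AishwaryaLi}. Lift $f,g,h$ to sets in $\R^{n+m}$ via $\{(x,y):|y|\leq \phi(x)^{1/m}\}$-type constructions with Gaussian weight in the extra variables, or use $\tilde\mu_f\propto f\,\d\gamma$ and the tensorized measure. Then apply the set/entropy inequality in dimension $n+m$ and let $m\to\infty$ or tune $m$ to match $p$. The barycenter of the lifted measures in the extra variables is zero by symmetry, so the barycenter terms are unchanged. This is precisely the place where the centering in $\mathcal D$ is used.

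\textbf{Main obstacle.} The delicate point is Step 2 for general $p$: checking that the lifting preserves $1$-strong log-concavity of the lifted measures (log-concavity of $f,g$ is used here) and that the barycenter term enters with exponent $1/n$ rather than $1/(n+m)$ after the limit. I would first verify the $p=0$ case directly by Jensen as above, combined with entropy convexity (which needs only Theorem~\ref{thm:centered-entropy} in its $\omega\to$ affine weakened form). I would then treat $p>0$ by the dimensional lifting, tracking the quadratic $|b|^2$ terms explicitly. The specialization $b_f=b_g$ then gives \eqref{eq:same-barycenter-BBL} immediately.
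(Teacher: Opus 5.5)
Your Jensen/Donsker--Varadhan argument covers $p=0$ and indicator functions. For general log-concave $f,g$ and $p\in(0,\infty]$ there is a genuine gap.

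The ``reduction to $p=\infty$'' is not a reduction. As $p$ increases, both the hypothesis $h\ge M_p^t(f,g)$ and the conclusion (through $r=p/(1+np)$) become stronger, so no single case implies the others. The dimensional lifting you invoke instead does not survive the passage to Gaussian measure. The lifted set $\{(x,y)\in\R^n\times\R^m:|y|\le f(x)^{1/m}\}$ has $\gamma_{n+m}$-measure $\int\gamma_m\bigl(B(0,f(x)^{1/m})\bigr)\,\d\gamma_n(x)$, which is not proportional to $\int f\,\d\gamma_n$. Its barycenter in the $x$-variables is also not $b_f$. So even an $(n+m)$-dimensional set inequality would not return \eqref{eq:barycenter-BBL}. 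You flag exactly this as the main obstacle and leave it open, and Step~1 is likewise never completed.

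The missing idea is that no lifting is needed. The bound you already wrote, $\log\int h\,\d\gamma\ge\int\log h\,\d\mu_t-\Ent_\gamma(\mu_t)$, handles every $p$ at once. Let $(X,Y)$ be an optimal coupling of $\mu_f,\mu_g$, so that $f(X)g(Y)>0$ almost surely. Then
\[
\int\log h\,\d\mu_t\ge\mathbb E\log M_p^t\bigl(f(X),g(Y)\bigr)\ge\log M_p^t\Bigl(e^{\int\log f\,\d\mu_f},e^{\int\log g\,\d\mu_g}\Bigr),
\]
where the second step is Jensen. It applies because $(a,b)\mapsto\log M_p^t(e^a,e^b)$ is convex: log-sum-exp for $0<p<\infty$, linear for $p=0$, and $\max$ for $p=\infty$.

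The remaining steps are as follows.
\begin{itemize}
\item Write $e^{-\Ent_\gamma(\mu_t)}=e^{-|b_t|^2/2}e^{-\mathcal D(\mu_t)}$.
\item Use the affine weakening of Theorem~\ref{thm:centered-entropy} in the form $e^{-\mathcal D(\mu_t)}\ge M_{1/n}^t(e^{-\mathcal D(\mu_f)},e^{-\mathcal D(\mu_g)})$.
\item Merge the two means with the two-point H\"older inequality $M_p^t(a,b)M_{1/n}^t(c,d)\ge M_r^t(ac,bd)$, where $1/r=1/p+n$.
\item Finish with the identity $\int\log f\,\d\mu_f-\mathcal D(\mu_f)=\log A+|b_f|^2/2$, and likewise for $g$.
\end{itemize}

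This is the paper's argument, and also the structure of \cite[Theorem~1.7]{AishwaryaLi}. The barycenters enter only through $\Ent_\gamma=\mathcal D+\frac12|b|^2$ and $b_{\mu_t}=b_t$. Truncating the logarithms in the Donsker--Varadhan formula takes care of integrability.
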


\begin{proof}
We modify the argument of \cite[Theorem~1.7]{AishwaryaLi} in two places
to account for nonzero barycenters.
Set $F=\log f$, $G=\log g$ and $H=\log h$ on their respective
positivity sets.  If $(X,Y)$ is an optimal coupling of $\mu_f$ and
$\mu_g$, and $\mu_t$ is the law of $(1-t)X+tY$, then Jensen's inequality
and Donsker--Varadhan duality yield 
\begin{equation}\label{eq:BBL-DV-reduction}
 \int h\,\d\gamma
 \geq
 M_p^t\!\left(e^{\int F\,\d\mu_f},e^{\int G\,\d\mu_g}\right)
 e^{-\Ent_\gamma(\mu_t)}.
\end{equation}
Truncating the logarithms in the Donsker--Varadhan variational formula
\cite[Section~2]{DonskerVaradhan} covers vanishing or unbounded functions, and the cases
$p=0,\infty$ follow by passage to the limit.

The measures $\mu_f$ and $\mu_g$ are $1$-strongly log-concave, and the
sine coefficients in \eqref{eq:intro-entropy-distortion} dominate their
affine counterparts. Therefore, Theorem~\ref{thm:centered-entropy}
gives
\begin{equation}
	e^{-\mathcal D(\mu_t)/n}
	\geq
	(1-t)e^{-\mathcal D(\mu_f)/n}
	+t e^{-\mathcal D(\mu_g)/n}, 
\end{equation}
which implies 
\begin{equation}\label{eq:entropy-power-mean}
	e^{-\mathcal D(\mu_t)}
	\geq M_{1/n}^t\!\left(
	e^{-\mathcal D(\mu_f)},e^{-\mathcal D(\mu_g)}
	\right).
\end{equation}
We use the following two-point form of the weighted H\"older
inequality:
\begin{equation}\label{eq:two-point-holder}
	M_p^t(a,b)M_{1/n}^t(c,d)
	\geq M_r^t(ac,bd),\qquad r=p/(1+np),
\end{equation}
which is the weighted H\"older inequality with
$1/r=1/p+n$; the cases $p=0$ and $p=\infty$ follow by
continuity.  Since the barycenter of $\mu_t$ is $b_t$, 
combining \eqref{eq:BBL-DV-reduction},
\eqref{eq:entropy-power-mean}, and \eqref{eq:two-point-holder} gives
\begin{equation}
	\int h\,\d\gamma
	\geq e^{-|b_t|^2/2}
	M_r^t\!\left(
	e^{\int F\,\d\mu_f-\mathcal D(\mu_f)},
	e^{\int G\,\d\mu_g-\mathcal D(\mu_g)}
	\right).
\end{equation}
Since $\d\mu_f=e^F A^{-1}\,\d\gamma$ and
$\d\mu_g=e^G B^{-1}\,\d\gamma$, we have
\begin{equation}
	e^{\int F\,\d\mu_f-\mathcal D(\mu_f)}
	=Ae^{|b_f|^2/2},
	\qquad
	e^{\int G\,\d\mu_g-\mathcal D(\mu_g)}
	=Be^{|b_g|^2/2}, 
\end{equation}
which proves \eqref{eq:barycenter-BBL}.
\end{proof}

\begin{corollary}[Gaussian BBL inequality with sine coefficients]
\label{cor:distortion-BBL}
Under the assumptions of Theorem~\ref{thm:barycenter-BBL}, denote
\[
 \omega=\sqrt{2/n}\left(W_2^2(\mu_f,\mu_g)-|b_f-b_g|^2\right)^{1/2}, 
\]
and
\[
 \alpha=\frac{\sin((1-t)\omega)}{\sin\omega},
 \qquad
 \beta=\frac{\sin(t\omega)}{\sin\omega},
\]
with $\alpha=1-t$ and $\beta=t$ when $\omega=0$. Then
\begin{equation}\label{eq:distortion-BBL}
 \int h\,\d\gamma
 \geq e^{-|b_t|^2/2}
 M_r^t\!\left(
 Ae^{|b_f|^2/2}\left(\frac{\alpha}{1-t}\right)^n,
 Be^{|b_g|^2/2}\left(\frac{\beta}{t}\right)^n
 \right).
\end{equation}
\end{corollary}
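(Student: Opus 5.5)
We follow the proof of Theorem~\ref{thm:barycenter-BBL} and change only the entropy input. In place of the affine bound \eqref{eq:entropy-power-mean} we keep the sine coefficients from Theorem~\ref{thm:centered-entropy}. Since $\mu_f$ and $\mu_g$ are $1$-strongly log-concave, \eqref{eq:intro-entropy-distortion} applies with $W_{\rm c}^2=W_2^2(\mu_f,\mu_g)-|b_f-b_g|^2$, and $\omega$ is exactly the frequency in the statement. We rewrite it as
\[
 e^{-\mathcal D(\mu_t)/n}
 \geq (1-t)\Bigl(\tfrac{\alpha}{1-t}\Bigr)e^{-\mathcal D(\mu_f)/n}
 +t\Bigl(\tfrac{\beta}{t}\Bigr)e^{-\mathcal D(\mu_g)/n}.
\]
This is an affine combination with weights $(1-t,t)$ of the quantities $c=(\alpha/(1-t))^ne^{-\mathcal D(\mu_f)}$ and $d=(\beta/t)^ne^{-\mathcal D(\mu_g)}$, each raised to the power $1/n$. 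Hence
\[
 e^{-\mathcal D(\mu_t)}\geq M_{1/n}^t(c,d).
\]
For $\omega=0$ we read the statement with $\alpha/(1-t)=\beta/t=1$, which recovers Theorem~\ref{thm:barycenter-BBL}. Since $\omega\leq2<\pi$, both $\alpha$ and $\beta$ are positive, so $c$ and $d$ are positive and the power mean is well defined.

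Next we use the Donsker--Varadhan reduction \eqref{eq:BBL-DV-reduction} and write $e^{-\Ent_\gamma(\mu_t)}=e^{-|b_t|^2/2}e^{-\mathcal D(\mu_t)}$, using that the barycenter of $\mu_t$ is $b_t$. Then the two-point H\"older inequality \eqref{eq:two-point-holder} with $a=e^{\int F\,\d\mu_f}$ and $b=e^{\int G\,\d\mu_g}$ gives
\[
 \int h\,\d\gamma\geq e^{-|b_t|^2/2}\,M_r^t(ac,bd).
\]
It remains to substitute the identities $e^{\int F\,\d\mu_f-\mathcal D(\mu_f)}=Ae^{|b_f|^2/2}$ and $e^{\int G\,\d\mu_g-\mathcal D(\mu_g)}=Be^{|b_g|^2/2}$ from the end of the proof of Theorem~\ref{thm:barycenter-BBL}. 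With these, $ac$ and $bd$ become the two arguments in \eqref{eq:distortion-BBL}.

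The main point to check is that the sine factors can be absorbed into the arguments of the power mean $M_{1/n}^t$ without changing its weights; this works because $(1-t)\,(\alpha/(1-t))=\alpha$ and $t\,(\beta/t)=\beta$. We also check that \eqref{eq:two-point-holder} applies to arbitrary nonnegative arguments. The endpoint cases $p=0,\infty$ and the degenerate case $ab=0$ are treated by the same limiting and truncation arguments as in Theorem~\ref{thm:barycenter-BBL}, and the sine factors play no role there.
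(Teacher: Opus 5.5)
Your proposal is correct and follows the paper's argument. The paper proves the corollary by keeping the sine coefficients in \eqref{eq:intro-entropy-distortion} and then applying the Donsker--Varadhan reduction and the two-point H\"older inequality \eqref{eq:two-point-holder} as in Theorem~\ref{thm:barycenter-BBL}; you spell out two points the paper leaves implicit, namely the absorption of $\alpha/(1-t)$ and $\beta/t$ into the arguments of $M_{1/n}^t$ and the positivity of $\alpha,\beta$, which follows from $\omega\le 2<\pi$.
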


\begin{proof}
	The result follows by retaining the sine coefficients in
	\eqref{eq:intro-entropy-distortion} and applying \eqref{eq:two-point-holder} as in the
	proof of Theorem~\ref{thm:barycenter-BBL}.
\end{proof}

\begin{proof}[Proof of Corollary~\ref{cor:barycenter-BM}]
For $0<t<1$, apply Theorem~\ref{thm:barycenter-BBL} with
\begin{equation}
	 f=\mathbf1_{K_0},
	\qquad g=\mathbf1_{K_1},
	\qquad h=\mathbf1_{K_t},
	\qquad p=\infty. 
\end{equation}
The pointwise assumption follows from the convention
$M_p^t(a,b)=0$ when $ab=0$. Taking the $n$-th root of
\eqref{eq:barycenter-BBL} yields \eqref{eq:intro-barycenter-BM}.

For the equality case, apply Corollary~\ref{cor:distortion-BBL} to the
same indicator functions with $p=\infty$.  Taking the $n$-th root gives
the stronger estimate with sine coefficients
\begin{equation}
	e^{|b_t|^2/(2n)}\gamma(K_t)^{1/n}
	\geq \alpha A_0+\beta A_1,
	\qquad
	A_i=e^{|b_{K_i}|^2/(2n)}\gamma(K_i)^{1/n}>0. 
\end{equation}
If $\omega>0$, then $\alpha>1-t$ and $\beta>t$. Thus equality in
\eqref{eq:intro-barycenter-BM} forces $\omega=0$, and hence
$\gamma_{K_1}$ is a translate of
$\gamma_{K_0}$. Set $a=b_{K_1}-b_{K_0}$. 
Comparing their densities with respect to Lebesgue measure yields 
$a\cdot x=\mathrm{const}$ almost everywhere on the nonempty open set
$\operatorname{int}K_1$, which forces $a=0$. Thus $\gamma_{K_0}=\gamma_{K_1}$, and equality of their supports yields 
$\bar K_0=\bar K_1$.

Conversely, if $\bar K_0=\bar K_1$, then
$K_0,K_1$, and $K_t$ agree up to Gaussian-null boundaries, so equality
holds for every $t\in[0,1]$.
\end{proof}

\begin{corollary}[Gaussian Brunn--Minkowski inequality with sine coefficients]
	\label{cor:distortion-BM}
	Under the assumptions of
	Corollary~\ref{cor:barycenter-BM}, fix $t\in(0,1)$ and denote
	\[
	\omega=\sqrt{2/n}
	\left(
	W_2^2(\gamma_{K_0},\gamma_{K_1})
	-|b_{K_1}-b_{K_0}|^2
	\right)^{1/2},
	\]
	and 
	\[
	\alpha=\frac{\sin((1-t)\omega)}{\sin\omega},
	\qquad
	\beta=\frac{\sin(t\omega)}{\sin\omega},
	\]
	with $\alpha=1-t$ and $\beta=t$ when $\omega=0$. Then
	\begin{equation}\label{eq:distortion-BM}
		e^{|b_t|^2/(2n)}\gamma(K_t)^{1/n}
		\geq
		\alpha e^{|b_{K_0}|^2/(2n)}\gamma(K_0)^{1/n}
		+\beta e^{|b_{K_1}|^2/(2n)}\gamma(K_1)^{1/n}.
	\end{equation} 
	Moreover, equality in \eqref{eq:distortion-BM} at some
	$t\in(0,1)$ holds if and only if
			$\bar K_0=\bar K_1$. In this case, equality holds for every
	$t\in[0,1]$. 
\end{corollary}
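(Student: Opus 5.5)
The inequality \eqref{eq:distortion-BM} is just the indicator specialization of Corollary~\ref{cor:distortion-BBL}. For fixed $t\in(0,1)$ I will take $f=\mathbf1_{K_0}$, $g=\mathbf1_{K_1}$, $h=\mathbf1_{K_t}$ and $p=\infty$, exactly as in the proof of Corollary~\ref{cor:barycenter-BM}.

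\emph{Setting up the data.} The functions $f,g$ are log-concave because the $K_i$ are convex. The pointwise hypothesis $h((1-t)x+ty)\geq M_\infty^t(f(x),g(y))$ holds because $M_p^t(a,b)=0$ when $ab=0$, and because $(1-t)x+ty\in K_t$ whenever $x\in K_0$ and $y\in K_1$. With these choices $\mu_f=\gamma_{K_0}$, $\mu_g=\gamma_{K_1}$, $b_f=b_{K_0}$, $b_g=b_{K_1}$, and $r=1/n$. So $\omega$ in Corollary~\ref{cor:distortion-BBL} coincides with the $\omega$ in the present statement.

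\emph{Deriving \eqref{eq:distortion-BM}.} Since $M_{1/n}^t(a,b)=((1-t)a^{1/n}+tb^{1/n})^n$ when $ab>0$, taking $n$-th roots of \eqref{eq:distortion-BBL} gives
\[
e^{-|b_t|^2/(2n)}\Bigl(\alpha\,\gamma(K_0)^{1/n}e^{|b_{K_0}|^2/(2n)}+\beta\,\gamma(K_1)^{1/n}e^{|b_{K_1}|^2/(2n)}\Bigr)\leq\gamma(K_t)^{1/n},
\]
which rearranges to \eqref{eq:distortion-BM}. Here $(1-t)(\alpha/(1-t))$ and $t(\beta/t)$ collapse to $\alpha$ and $\beta$. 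This positivity requires $\gamma(K_i)>0$, which is part of the assumptions.

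\emph{Equality case.} This is the only point needing care. Suppose equality holds in \eqref{eq:distortion-BM} at some $t\in(0,1)$. Since $\omega\leq 2<\pi$, one has $\alpha\geq 1-t$ and $\beta\geq t$, with strict inequality when $\omega>0$. Hence the right side of \eqref{eq:distortion-BM} is at least the right side of \eqref{eq:intro-barycenter-BM}, while its left side is the same. Equality in \eqref{eq:distortion-BM} therefore forces equality in \eqref{eq:intro-barycenter-BM}, and Corollary~\ref{cor:barycenter-BM} then gives $\bar K_0=\bar K_1$.

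\emph{Converse.} If $\bar K_0=\bar K_1$, then $\gamma_{K_0}=\gamma_{K_1}$. Hence $W_2=0$ and the barycenters agree, so $\omega=0$, $\alpha=1-t$, $\beta=t$. Moreover $K_t$ agrees with $K_0$ up to a Gaussian-null boundary, so both sides of \eqref{eq:distortion-BM} equal $e^{|b_{K_0}|^2/(2n)}\gamma(K_0)^{1/n}$ for every $t\in[0,1]$. The main obstacle is only noting the monotonicity $\alpha\geq 1-t$, $\beta\geq t$, so that equality can be transferred to Corollary~\ref{cor:barycenter-BM}; everything else is bookkeeping.
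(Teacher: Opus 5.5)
Your derivation of \eqref{eq:distortion-BM} from Corollary~\ref{cor:distortion-BBL} (indicators, $p=\infty$, $n$-th roots) and your converse are correct and match the paper. The equality argument, however, runs the implication the wrong way. Write $J=e^{|b_t|^2/(2n)}\gamma(K_t)^{1/n}$ and $A_i=e^{|b_{K_i}|^2/(2n)}\gamma(K_i)^{1/n}$. Your observation $\alpha\ge1-t$, $\beta\ge t$ says that \eqref{eq:distortion-BM} is the \emph{stronger} inequality: $J\ge\alpha A_0+\beta A_1\ge(1-t)A_0+tA_1$. If $J=\alpha A_0+\beta A_1$ and $\omega>0$, then $J>(1-t)A_0+tA_1$, so \eqref{eq:intro-barycenter-BM} is \emph{strict}. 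Equality in a stronger inequality does not transfer to a weaker one. The transfer only works in the other direction: equality in the weaker inequality forces $\omega=0$, and this is exactly how the equality case of Corollary~\ref{cor:barycenter-BM} is proved. So nothing in your argument excludes $\omega>0$ together with equality in \eqref{eq:distortion-BM}, and the characterization is not established.

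The missing input is the equality case of the entropy inequality itself, which is how the paper argues. Let $(\mu_s)$ be the Wasserstein geodesic from $\gamma_{K_0}$ to $\gamma_{K_1}$. Because $\mu_t(K_t)=1$, nonnegativity of entropy relative to $\gamma_{K_t}$ gives $\Ent_\gamma(\mu_t)\ge-\log\gamma(K_t)$. Moreover, $\mu_t$ has barycenter $b_t$ and $e^{-\mathcal D(\gamma_{K_i})/n}=A_i$, so Theorem~\ref{thm:centered-entropy} yields
\[
J\ \ge\ e^{-\mathcal D(\mu_t)/n}\ \ge\ \alpha A_0+\beta A_1 .
\]
Hence equality in \eqref{eq:distortion-BM} forces equality in \eqref{eq:intro-entropy-distortion} at the interior time $t$. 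Therefore $\gamma_{K_1}$ is a translate of $\gamma_{K_0}$, necessarily by $a=b_{K_1}-b_{K_0}$. Comparing Lebesgue densities on $\operatorname{int}K_1$ shows that $a\cdot x$ is constant there. Hence $a=0$, $\gamma_{K_0}=\gamma_{K_1}$, and $\bar K_0=\bar K_1$.
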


\begin{proof}
	The result follows by applying Corollary~\ref{cor:distortion-BBL}
	with the indicator functions used in the proof of
	Corollary~\ref{cor:barycenter-BM}, setting $p=\infty$, and taking the
	$n$-th root of \eqref{eq:distortion-BBL}. 
	
	For the equality case, let $(\mu_s)_{s\in[0,1]}$ be the
	quadratic Wasserstein geodesic from $\gamma_{K_0}$ to
		$\gamma_{K_1}$. Since $\mu_t(K_t)=1$, nonnegativity of relative
		entropy yields $\Ent_\gamma(\mu_t)\geq-\log\gamma(K_t)$. Together
		with Theorem~\ref{thm:centered-entropy}, this gives
	\begin{equation}
		e^{|b_t|^2/(2n)}\gamma(K_t)^{1/n}
		\geq e^{-\mathcal D(\mu_t)/n}
		\geq \alpha A_0+\beta A_1,
		\qquad
		A_i=e^{|b_{K_i}|^2/(2n)}\gamma(K_i)^{1/n}>0. 
	\end{equation}
	Hence equality in \eqref{eq:distortion-BM} forces equality in
	\eqref{eq:intro-entropy-distortion}, which implies $\gamma_{K_1}$ is a
		translate of $\gamma_{K_0}$. The final density comparison and the
		converse are the same as in the proof of
		Corollary~\ref{cor:barycenter-BM}.
\end{proof}

We conclude with the functional inequalities obtained by joining a
measure to equilibrium.  This is the HWI principle of Otto and Villani
\cite{OttoVillani}, now applied with the centered entropy estimate.
The even versions of the resulting HWI, logarithmic Sobolev, and
Talagrand inequalities appear in \cite[Section~5]{AishwaryaRotem}; the
centering argument in the proof of Corollary~\ref{cor:centered-functionals}
removes the evenness assumption.
If $\d\mu=\rho\,\d\gamma$, denote
\[
 \FI_\gamma(\mu)=\int|\nabla\log\rho|^2\,\d\mu
\]
for relative Fisher information, with the usual value $+\infty$ outside
its form domain.

\begin{corollary}[Gaussian functional inequalities after centering]
\label{cor:centered-functionals}
Let $\mu$ be $1$-strongly log-concave, let $m=\int x\,\d\mu$, and denote
\[
 \mathcal D=\Ent_\gamma(\mu)-\frac12|m|^2,
 \quad
 \mathcal I=\FI_\gamma(\mu)-|m|^2,
 \quad
 \mathcal W^2=W_2^2(\mu,\gamma)-|m|^2.
\]
Then $\mathcal D\geq0$, $\mathcal I\geq0$, and
$0\leq\mathcal W^2\leq n$.  Moreover,
\begin{equation}\label{eq:centered-HWI}
 e^{\mathcal D/n}
 \leq
 \cos\!\left(\sqrt{\frac2n}\,\mathcal W\right)
 +\frac{\sin\!\left(\sqrt{2/n}\,\mathcal W\right)}{\sqrt{2n}}
  \sqrt{\mathcal I},
\end{equation}
\begin{equation}\label{eq:centered-LSI}
 4\mathcal D
 \leq2n\left(e^{2\mathcal D/n}-1\right)
 \leq\mathcal I,
\end{equation}
and
\begin{equation}\label{eq:centered-Talagrand}
 \mathcal W^2
 \leq-n\log\cos\!\left(\sqrt{\frac2n}\,\mathcal W\right)
 \leq\mathcal D.
\end{equation}
Moreover, for every $s\geq0$,
\begin{equation}\label{eq:two-rate-OU}
 \Ent_\gamma(P_s^*\mu)
 \leq e^{-4s}\left(\Ent_\gamma(\mu)-\frac12|m|^2\right)
      +\frac12e^{-2s}|m|^2.
\end{equation}
\end{corollary}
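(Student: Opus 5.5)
The plan is to join the centered measure to equilibrium and apply Theorem~\ref{thm:centered-entropy}. Let $\bar\mu=(x\mapsto x-m)_\#\mu$. It is $1$-strongly log-concave and centered, so $\mathcal D=\Ent_\gamma(\bar\mu)$. By the cost decomposition \eqref{eq:centered-cost-decomposition} with $\gamma$ centered, $\mathcal W=W_2(\bar\mu,\gamma)$. The Fisher information also centers: writing $\d\mu=\rho\,\d\gamma$, the density of $\bar\mu$ is $\bar\rho(x)=\rho(x+m)e^{-m\cdot x-|m|^2/2}$. Hence $\nabla\log\bar\rho(x)=(\nabla\log\rho)(x+m)-m$, and $\int\nabla\log\rho\,\d\mu=\int\nabla\rho\,\d\gamma=\int x\,\d\mu=m$. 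Therefore $\FI_\gamma(\bar\mu)=\FI_\gamma(\mu)-|m|^2=\mathcal I$, so all three quantities are the uncentered ones for $\bar\mu$. Nonnegativity follows directly, and $\mathcal W^2\le\tr\Cov(\bar\mu)\le n$ follows from the independent coupling and Lemma~\ref{lem:linear-BL}.

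For HWI, I apply Theorem~\ref{thm:centered-entropy} to the geodesic from $\mu_0=\gamma$ to $\mu_1=\bar\mu$. Both endpoints are $1$-strongly log-concave with barycenter $0$, and $\Ent_\gamma(\gamma)=0$. With $\omega=\sqrt{2/n}\,\mathcal W$, the inequality gives
\[
U_t\ge\frac{\sin((1-t)\omega)}{\sin\omega}+\frac{\sin(t\omega)}{\sin\omega}U_1,\qquad U_t=e^{-\Ent_\gamma(\mu_t)/n}.
\]
Equality holds at $t=1$. So $\frac{d}{dt}U_t|_{t=1^-}\le\frac{-\omega+\omega\cos\omega\,U_1}{\sin\omega}$, which reads
\[
\frac{U_1}{n}\frac{d}{dt}\Ent_\gamma(\mu_t)\Big|_{t=1}\le\frac{\omega(1-\cos\omega\,U_1)}{\sin\omega}.
\]
The derivative of entropy at the endpoint is at most $\sqrt{\mathcal I}\,\mathcal W$, by the standard first-variation/Cauchy--Schwarz bound (Otto--Villani). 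Dividing by $U_1=e^{-\mathcal D/n}$ and multiplying by $\sin\omega/\omega$ gives
\[
e^{\mathcal D/n}\le\cos\omega+\frac{\sin\omega}{\omega}\frac{\sqrt{\mathcal I}\,\mathcal W}{n}.
\]
Since $\mathcal W/(n\omega)=1/\sqrt{2n}$, this is \eqref{eq:centered-HWI}. I would justify the endpoint derivative by the regularization of Lemma~\ref{lem:regularization}, or by citing \cite[Section~5]{AishwaryaRotem} verbatim, since they are now applied to centered measures. This derivative step is the main technical obstacle: lower semicontinuity must be handled in the right direction, and the bound is only needed when $\mathcal I<\infty$.

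LSI and Talagrand then follow by elementary optimization in $\mathcal W$ and in $\mathcal I$, exactly as in \cite[Section~5]{AishwaryaRotem}. For LSI, maximize the right side of \eqref{eq:centered-HWI} over $\omega$: $\cos\omega+c\sin\omega\le\sqrt{1+c^2}$ with $c=\sqrt{\mathcal I/(2n)}$. This gives $e^{2\mathcal D/n}\le 1+\mathcal I/(2n)$, and $4\mathcal D\le 2n(e^{2\mathcal D/n}-1)$ is $e^x\ge1+x$. For Talagrand, I use $\mathcal I\ge0$ differently: I apply the distortion inequality at $t$ to the reversed geodesic from $\bar\mu$ to $\gamma$. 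Positivity of entropy gives $U_t\le1$, hence $1\ge\frac{\sin(t\omega)}{\sin\omega}+\frac{\sin((1-t)\omega)}{\sin\omega}e^{-\mathcal D/n}$. Letting $t\to1$ and optimizing yields $e^{-\mathcal D/n}\le\cos\omega$. Finally, $x\le-\frac{n}{1}\cdot\frac12\log\cos\sqrt{2x/n}$-type elementary bounds give $\mathcal W^2\le -n\log\cos\omega$, using $-\log\cos y\ge y^2/2$ and $\omega^2 n/2=\mathcal W^2$.

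For \eqref{eq:two-rate-OU}, note that $P_s^*$ commutes with centering up to the mean: $P_s^*\mu$ has mean $e^{-s}m$ and is the shift of $P_s^*\bar\mu$. Hence $\Ent_\gamma(P_s^*\mu)=\Ent_\gamma(P_s^*\bar\mu)+\frac12e^{-2s}|m|^2$. Moreover $P_s^*\bar\mu$ is centered and $1$-strongly log-concave by the conditional-covariance argument of Lemma~\ref{lem:regularization}, since $P_s^*$ is a scaled Gaussian convolution. Applying the centered LSI $4\Ent\le\FI$ along the flow, with $\frac{d}{ds}\Ent_\gamma=-\FI_\gamma$, gives $\Ent_\gamma(P_s^*\bar\mu)\le e^{-4s}\mathcal D$ by Gr\"onwall.
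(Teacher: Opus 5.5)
\textbf{Main gap: the bound $\mathcal W^2\le n$.} Most of your proposal follows the paper's route. This includes the reduction to $\bar\mu$, HWI by differentiating the sine-distortion inequality at the endpoint $\bar\mu$, LSI via $\cos\omega+c\sin\omega\le\sqrt{1+c^2}$, and the Ornstein--Uhlenbeck estimate via the translation formula and de Bruijn--Gr\"onwall. Your Talagrand argument is a clean variant of the paper's: you use $\Ent_\gamma(\mu_t)\ge0$ along the geodesic from $\bar\mu$ to $\gamma$ and differentiate the explicit lower bound at $t=1$. It needs no differentiability of the entropy.

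The bound $\mathcal W^2\le n$, however, is not established. The independent coupling of $\bar\mu$ and $\gamma$ gives $\mathbb E|\bar X-G|^2=\tr\Cov(\bar\mu)+n\le 2n$, not $\tr\Cov(\bar\mu)$. Your intermediate claim $\mathcal W^2\le\tr\Cov(\bar\mu)$ is false. For $\bar\mu=N(0,\sigma^2 I)$ with $\sigma<1/2$, which is $1$-strongly log-concave, one has $\mathcal W^2=n(1-\sigma)^2>n\sigma^2$. Nothing else in your argument recovers the constant $n$: the inequality $e^{-\mathcal D/n}\le\cos\omega$ only forces $\omega<\pi/2$, i.e.\ $\mathcal W^2<\pi^2n/8$.

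The missing ingredient is Caffarelli's contraction theorem. In the smooth case, the Brenier map $T$ from $\gamma$ to $\bar\mu$ satisfies $0\preceq DT\preceq I$. Both measures are centered, so $\int(T-x)\,\d\gamma=0$. Applying the Gaussian Poincar\'e inequality to each component of $T-I$ then gives $\mathcal W^2=\int|T-x|^2\,\d\gamma\le\int\norm{DT-I}_{\HS}^2\,\d\gamma\le n$. The general case follows from the regularization of Lemma~\ref{lem:regularization} and $W_2$-continuity.

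\textbf{Two smaller points.} First, in the HWI step your displayed inequality has the wrong direction. From $\frac{d}{dt}U_t|_{t=1^-}\le\frac{-\omega+\omega\cos\omega\,U_1}{\sin\omega}$ and $U_t'=-\frac{U_t}{n}\frac{d}{dt}\Ent_\gamma(\mu_t)$, one gets $\frac{U_1}{n}\frac{d}{dt}\Ent_\gamma(\mu_t)|_{t=1}\ge\frac{\omega(1-\cos\omega\,U_1)}{\sin\omega}$. Only this lower bound combines with the upper bound $\sqrt{\mathcal I}\,\mathcal W$ to yield \eqref{eq:centered-HWI}.

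Second, when you regularize HWI, lower semicontinuity takes care of $\mathcal D$ on the left side, but not $\mathcal I$ on the right side. There you need $\limsup_{\varepsilon\to0}\FI_\gamma(P_\varepsilon^*\bar\mu)\le\FI_\gamma(\bar\mu)$. The paper obtains this from the Ornstein--Uhlenbeck gradient commutation, which gives $\FI_\gamma(P_\varepsilon^*\bar\mu)\le e^{-2\varepsilon}\FI_\gamma(\bar\mu)$. This is available to you, because the regularization of Lemma~\ref{lem:regularization} with $\kappa=1$ is exactly $P_s^*$ with $e^{2s}=1+\varepsilon$.
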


\begin{proof}
\medskip\noindent\textbf{Step 1: Centering.}
Let $\bar\mu=(x\mapsto x-m)_\#\mu$
be the centered translate of $\mu$. Set $\d\mu=\rho\,\d\gamma$ and
$\bar\rho=\d\bar\mu/\d\gamma$. Gaussian integration by parts and a
change of variables give
\begin{equation}
	\int \nabla\log\rho\,\d\mu=\int\nabla\rho\,\d\gamma=m,\qquad\log\bar\rho(x)
	=
	\log\rho(x+m)
	-m\cdot x
	-\frac{|m|^2}{2}, 
\end{equation}
where the Gaussian integration by parts in the first identity is
justified by standard radial cutoffs whenever
$\FI_\gamma(\mu)<\infty$. If $\FI_\gamma(\mu)=+\infty$, the identity for
the translated density implies
$\FI_\gamma(\bar\mu)=+\infty$, so the Fisher-information identity below
holds in the extended sense. 
Combining this with the orthogonal decomposition of the quadratic
transport cost, we obtain
\begin{equation}
	 \mathcal D=\Ent_\gamma(\bar\mu),
	\qquad
	\mathcal I=\FI_\gamma(\bar\mu),
	\qquad
	\mathcal W=W_2(\bar\mu,\gamma), 
\end{equation}
which yields the asserted nonnegativity. Hence it is sufficient
to assume $m=0$. 

Set $\omega=\sqrt{2/n}\,\mathcal W$. In the smooth case, let $T$ be
the Brenier map from $\gamma$ to $\bar\mu$. Since both measures are
centered, $\int(T(x)-x)\,\d\gamma(x)=0$, while Caffarelli's contraction theorem \cite{Caffarelli} yields 
$0\preceq DT\preceq I$. 
Applying the Gaussian Poincar\'e inequality
to each component of $T-I$ and summing over the components, we obtain 
\begin{equation}
	\mathcal W^2
	=\int|T(x)-x|^2\,\d\gamma(x)
	\leq\int\norm{DT-I}_{\HS}^2\,\d\gamma
	\leq n.
\end{equation}
The general case follows by Lemma~\ref{lem:regularization}. In particular,
$0\leq\omega\leq\sqrt2<\pi/2$. 

\medskip\noindent\textbf{Step 2: HWI, logarithmic Sobolev, and Talagrand estimates.}
First assume that $\bar\mu$ has a smooth potential with two-sided
Hessian bounds. Apply Theorem~\ref{thm:centered-entropy} to the
geodesic from $\bar\mu$ to $\gamma$, and write
$E_t=\Ent_\gamma(\mu_t)$. 
Differentiating the finite-time inequality at the first endpoint and using
$-E_0'\leq\sqrt{\FI_\gamma(\bar\mu)}\,W_2(\bar\mu,\gamma)$ yields 
\eqref{eq:centered-HWI}, as in
\cite[Theorem~5.5]{AishwaryaRotem}. 
 Squaring this inequality and
arguing as in \cite[Corollary~5.6]{AishwaryaRotem} yields
\eqref{eq:centered-LSI}. Reversing the geodesic and differentiating at
the Gaussian endpoint, where the Fisher information vanishes, yields $e^{-\mathcal D/n}\leq\cos\omega$. 
Together with $-\log\cos\omega\geq\omega^2/2$, this yields
\eqref{eq:centered-Talagrand}, as in
\cite[Corollary~5.11]{AishwaryaRotem}. Once
Theorem~\ref{thm:centered-entropy} has replaced the even entropy
estimate used there, none of these arguments requires symmetry. 

Since $P_s^*\bar\mu$ remains centered and $1$-strongly log-concave,
the de Bruijn argument as in \cite[Corollary~5.9]{AishwaryaRotem} applied with
\eqref{eq:centered-LSI} yields 
\begin{equation}
	\Ent_\gamma(P_s^*\bar\mu)
	\leq e^{-4s}\Ent_\gamma(\bar\mu).
\end{equation}
The mean of $P_s^*\mu$ is $e^{-s}m$, and its centered translate is
$P_s^*\bar\mu$. The Gaussian entropy translation formula therefore
yields \eqref{eq:two-rate-OU}. 

For the general case, set
$\mu_\varepsilon=P_\varepsilon^*\bar\mu$.
By Lemma~\ref{lem:regularization}, applied with parameter
$e^{2\varepsilon}-1$, these measures satisfy the required
smooth two-sided Hessian bounds and converge to $\bar\mu$ in $W_2$.
Moreover, the data processing inequality, lower semicontinuity, and the
Ornstein--Uhlenbeck gradient commutation give
\begin{equation}
	\Ent_\gamma(\mu_\varepsilon)
	\longrightarrow\Ent_\gamma(\bar\mu),
	\qquad
	\FI_\gamma(\mu_\varepsilon)
	\leq e^{-2\varepsilon}\FI_\gamma(\bar\mu). 
\end{equation}
These facts allow passage to the limit in \eqref{eq:centered-HWI}, \eqref{eq:centered-LSI} and \eqref{eq:centered-Talagrand}.
For \eqref{eq:two-rate-OU}, use
$P_s^*\mu_\varepsilon=P_{s+\varepsilon}^*\bar\mu$ and the lower
semicontinuity of entropy before letting $\varepsilon\to0$.
\end{proof}

\begin{remark}
	The two rates in \eqref{eq:two-rate-OU} are optimal.  Translates of
	$\gamma$ attain the $e^{-2s}$ term, while centered Gaussian contractions
	with covariance approaching $I$ show that $e^{-4s}$ cannot be improved
		for centered measures.
	For a centered $1$-strongly log-concave measure, the result of Bobkov,
	Gozlan, Roberto, and Samson \cite[Corollary 2.2]{BobkovEtAl} gives
	\[
	n\left(e^{2\Ent_\gamma(\mu)/n}-1\right)
	\leq \FI_\gamma(\mu).
	\]
	Under the same centering and curvature assumptions,
	\eqref{eq:centered-LSI} doubles the left-hand side.  This improvement
	uses the full geodesic Poincar\'e estimate.
\end{remark}

\appendix

\section{Failure of interpolation by the endpoint Poincar\'e constants}
\label{app:actual-CP-counterexample}

\begin{proposition}
	\label{prop:actual-CP-counterexample}
	There exist $\kappa>0$ and two $\kappa$-strongly log-concave
	probability measures $\mu_0,\mu_1$ on $\R^2$, such that their Wasserstein
	midpoint satisfies 
	\begin{equation}
			\sqrt{C_P(\mu_{1/2})}
		>\frac{1}{2}\sqrt{C_P(\mu_0)}+\frac{1}{2}\sqrt{C_P(\mu_1)}. 
	\end{equation}
\end{proposition}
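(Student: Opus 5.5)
The plan is to exploit the gap between the Poincar\'e constant and the variance of linear functions, which is exactly where the sharp estimate of Theorem~\ref{thm:main} is loose. We cannot use Gaussian endpoints. If $\mu_0=N(0,A)$ and $\mu_1=N(0,B)$ with Brenier map $T$, then $\mu_{1/2}=N(0,\tfrac14(I+T)A(I+T))$ and $TAT=B$. Hence
\[
\norm{(I+T)A^{1/2}}\leq\norm{A^{1/2}}+\norm{TA^{1/2}}=\sqrt{\norm{A}}+\sqrt{\norm{B}},
\]
so the stronger bound holds for all Gaussian pairs. The same triangle-inequality argument controls every linear test function along any geodesic, since $\Var_{\mu_t}(e\cdot x)^{1/2}\leq(1-t)\Var_{\mu_0}(e\cdot x)^{1/2}+t\Var_{\mu_1}(e\cdot x)^{1/2}$. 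So the counterexample must come from endpoints whose Poincar\'e constants are set by nonlinear eigenfunctions, while the midpoint becomes nearly Gaussian in some direction.

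Concretely, I would take $\nu$ to be a Gaussian of very small curvature $\kappa$ conditioned on $[-1,1]$. Then $\nu$ is $\kappa$-strongly log-concave, close to uniform, and $C_P(\nu)\approx4/\pi^2$ while $\Var(\nu)\approx1/3<4/\pi^2$. Put $\mu_0=\nu\otimes N(0,\sigma^2)$ and $\mu_1=N(0,\sigma^2)\otimes\nu$ with $\sigma^2\leq C_P(\nu)$, so that $C_P(\mu_0)=C_P(\mu_1)=C_P(\nu)$. A naive coordinatewise product coupling does not help, because the midpoint is then $\rho\otimes\rho$ with $\rho$ a one-dimensional midpoint. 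I would therefore replace the second factor by a rotation: $\mu_1=(R_\theta)_\#\mu_0$ for a rotation $R_\theta$ by a small or intermediate angle $\theta$. In that case $C_P(\mu_1)=C_P(\mu_0)$ trivially, and the right-hand side of the target inequality equals $\sqrt{C_P(\mu_0)}$.

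The key step is to show $C_P(\mu_{1/2})>C_P(\mu_0)$ for a suitable choice of $\sigma$ and $\theta$. The idea is as follows. The Brenier map between a measure and its rotation is not the rotation itself; it is a ``shortened'' map. The midpoint therefore mixes the uniform-like direction of $\mu_0$ with the Gaussian direction of $\mu_1$, giving a measure with a wide flat profile in a diagonal direction. I would try to make this quantitative in the degenerate limit $\sigma\to0$, where both endpoints concentrate on segments $[-1,1]e_1$ and $[-1,1]e_\theta$. There the optimal map is explicit: monotone along the segments, $se_1\mapsto se_\theta$ if $\cos\theta>0$. The midpoint is then uniform on the segment $\{s(e_1+e_\theta)/2\}$, whose Poincar\'e constant is $\tfrac{4}{\pi^2}\cdot\tfrac{1+\cos\theta}{2}$, which is smaller than $C_P(\nu)$.

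This shows the segment limit by itself also fails, so the main obstacle is the actual choice of endpoints. My fallback would be a perturbative computation. Start from a pair where equality holds in Theorem~\ref{thm:main}: $\mu_0=N(0,\kappa^{-1})\otimes\nu_0$ and $\mu_1=N(0,\kappa^{-1})\otimes\nu_1$, where the Gaussian factor attains $C_P$ and $C_P(\nu_i)<\kappa^{-1}$. Perturb only $\mu_0$ by a small anisotropic convex perturbation that lowers $C_P(\mu_0)$ at first order while keeping $\kappa$-strong log-concavity. Using the first-order variation of the transport map, I would compute $C_P(\mu_{1/2})$ to first order through the Rayleigh quotient of the linear eigenfunction $e\cdot x$. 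The aim is to show that it decreases at a strictly slower rate than $\tfrac12\sqrt{C_P(\mu_0)}+\tfrac12\sqrt{C_P(\mu_1)}$. If the first-order terms cancel, I would instead fix an explicit two-dimensional pair, such as a Gaussian conditioned on a thin slanted parallelogram against its reflection, and verify the strict inequality by a direct computation of $C_P(\mu_{1/2})$ using a trial function together with upper bounds for the endpoint constants.
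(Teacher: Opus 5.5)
Your preliminary observations are correct and point in the right direction. For Gaussian pairs, and for linear test functions along any geodesic, the triangle inequality already gives the interpolated bound, so a counterexample needs a nonlinear extremal function. The gap is that the proposal never produces an example.

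\textbf{Rotation construction.} You abandon it after your own $\sigma\to0$ computation shows it goes the wrong way. For $\sigma>0$ the Brenier map to the rotated measure is not explicit, so nothing is established there.

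\textbf{Perturbative fallback.} This cannot work as you plan it. Start from $\mu_i=N(0,\kappa^{-1})\otimes\nu_i$ with $C_P(\nu_i)<\kappa^{-1}$. The top eigenvalue at $\mu_{1/2}$ is then simple and isolated, with eigenfunction $e\cdot x$. By the envelope (Hellmann--Feynman) principle, the first-order variation of $C_P(\mu_{1/2}^\varepsilon)$ is that of $\Var_{\mu_{1/2}^\varepsilon}(e\cdot x)$. Your own triangle inequality bounds that quantity by $\tfrac12\sqrt{C_P(\mu_0^\varepsilon)}+\tfrac12\sqrt{C_P(\mu_1)}$. So at first order the left side can never decrease ``at a strictly slower rate''; the first-order terms either cancel or go the wrong way. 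Any violation would be a second-order effect involving the variation of a nonlinear transport map, and you give no handle on that.

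\textbf{Parallelogram fallback.} This is unverified. In the natural case where the Brenier map is linear, it actually fails. Take the rhombi $(B_{\mp s})_\#m_\Omega$ with $B_s=\begin{pmatrix}1&s\\s&1\end{pmatrix}$ and $\Omega=(-\pi/2,\pi/2)^2$. The midpoint is the square itself. Its Poincar\'e constant is strictly smaller than that of the endpoints, because the double Neumann eigenvalue splits as $1-\frac{16}{\pi^2}|s|+O(s^2)$.

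\textbf{The missing idea, used in the paper.} Take endpoints related by a symmetric positive definite linear map, so the geodesic is explicit. Base them on a non-Gaussian measure whose extremal is nonlinear and degenerate, and work on one side of the degeneracy.
\begin{itemize}
\item Set $\nu_s=(B_s)_\#m_\Omega$. Since $(1-t)I+tB_r=B_{tr}$, the geodesic from $\nu_0$ to $\nu_r$ is $(\nu_{tr})_{t\in[0,1]}$.
\item The first Neumann eigenvalue $1$ of the square is double, with eigenfunctions $\sin x$ and $\sin y$. Analytic perturbation of closed forms gives, for $s>0$, the lower branch $C_P(\nu_s)^{-1}=1-\frac{16}{\pi^2}s+\beta s^2+O(s^3)$, with eigenfunction near $\sin x+\sin y$.
\item A second-order sum over the tensor Neumann basis gives $\beta>3-96/\pi^4>192/\pi^4$. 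This is exactly the condition for $s\mapsto\sqrt{C_P(\nu_s)}$ to be strictly concave at $0^+$, which yields the midpoint violation for small $r$.
\item Strong log-concavity comes from the weight $e^{-\delta|x|^2/2}$ on $\Omega$. The pushforwards under $B_s$, $0\leq s\leq r$, are $\delta(1+r)^{-2}$-strongly log-concave. The weight changes $C_P$ by at most a factor $e^{\pm\delta\pi^2/2}$, and $B_r$ remains the Brenier map.
\end{itemize}
The one-sidedness is essential: the $|s|$ kink at the symmetric configuration is exactly what defeats the reflected pair above.
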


\begin{proof}
	\medskip\noindent\textbf{Step 1: The square-shear counterexample.}
	Let
	\[
	\begin{gathered}
	\Omega=\left(-\frac{\pi}{2},\frac{\pi}{2}\right)^2\subseteq\R^2,
	\qquad S=\begin{pmatrix}0&1\\1&0\end{pmatrix},\\
	B_s=\begin{pmatrix}1&s\\s&1\end{pmatrix},
	\qquad \nu_s=(B_s)_\#m_\Omega.
	\end{gathered}
	\]
	where $m_\Omega$ is normalized Lebesgue measure on $\Omega$. Since the restrictions of $C_c^\infty(\R^2)$ to $\Omega$ are dense in
	$H^1(\Omega)$, pulling the Rayleigh quotient back by $B_s$ yields 
	\begin{equation}\label{eq:counterexample-rayleigh}
		C_P(\nu_s)^{-1}
		=
		\lambda(s)
		:=
		\inf_{\substack{
				u\in H^1(\Omega)\setminus\{0\}\\
				\int_\Omega u\,\d m_\Omega=0}}
		\frac{\int_\Omega
			\ip{B_s^{-2}\nabla u}{\nabla u}\,\d m_\Omega}
		{\int_\Omega u^2\,\d m_\Omega}. 
	\end{equation}
	
	Note that $S^2=I$ and 
	\begin{equation}
			B_s^{-2}=I-2sS+3s^2I+O(s^3), \qquad s\rightarrow0^+. 
	\end{equation}
	Consequently, for $u,v\in H^1(\Omega)$,
	\begin{equation}\label{eq:square-form-expansion}
	\begin{aligned}
	\int_\Omega\ip{B_s^{-2}\nabla u}{\nabla v}\,\d m_\Omega
	=\int_\Omega\ip{\nabla u}{\nabla v}\,\d m_\Omega
	&-2s\int_\Omega
	\bigl(\partial_1u\,\partial_2v+
	      \partial_2u\,\partial_1v\bigr)\,\d m_\Omega\\
	&+3s^2\int_\Omega
	\ip{\nabla u}{\nabla v}\,\d m_\Omega+O(s^3).
	\end{aligned}
	\end{equation}
	
	The numerator in \eqref{eq:counterexample-rayleigh} defines a
	real-analytic family of closed symmetric forms, with common form domain
	$H^1(\Omega)$ and compact resolvent. We next determine the expansion of
	$\lambda(s)$ as $s\rightarrow0$.
	At $s=0$, separation of variables shows that the first positive
	Neumann eigenvalue on $\Omega$ is $1$, with eigenspace
	\begin{equation}
		E=\operatorname{span}\{e_1,e_2\},
		\qquad
		e_1(x,y)=\sqrt2\sin x,
		\qquad
		e_2(x,y)=\sqrt2\sin y, 
	\end{equation}
	and the next eigenvalue is $2$. In the orthonormal basis $(e_1,e_2)$,
	the restriction to $E$ of the first-order form in
	\eqref{eq:square-form-expansion} has matrix $-16S/\pi^2$. Its two
	eigenvalues are distinct. The lower one is $-16/\pi^2$, with normalized
	eigenfunction $u_0=(e_1+e_2)/\sqrt2=\sin x+\sin y$.
	
	Analytic perturbation theory for closed symmetric forms
	\cite[Chapter~VII, Section~4.6]{KatoPerturbation} therefore gives two
	analytic eigenvalue branches issuing from the double eigenvalue $1$.
	Because their first derivatives at $0$ are distinct, the lower branch
	is simple for all sufficiently small $s>0$ and equals $\lambda(s)$.
	Thus
	\begin{equation}\label{eq:counterexample-eigenvalue-expansion}
			\lambda(s)
		=
		1-\frac{16}{\pi^2}s+\beta s^2+O(s^3),
		\qquad s\rightarrow0^+. 
	\end{equation}
	
	We claim that $\beta >\frac{192}{\pi^4}$. Indeed, let $I=(-\pi/2,\pi/2)$ and $\d m_I=\d x/\pi$, and consider the
	normalized Neumann basis 
	\begin{equation}
			\phi_0=1,
		\qquad
		\phi_k(x)=
		\begin{cases}
			\sqrt2\sin(kx),&k\geq1\text{ odd},\\
			\sqrt2\cos(kx),&k\geq2\text{ even}. 
		\end{cases} 
	\end{equation}
	Then $\phi_m\otimes\phi_n$ is an orthonormal Neumann eigenbasis on
	$\Omega$, with eigenvalue $m^2+n^2$. Set 
	\[
	A_m=\int_I\cos x\,\phi_m(x)\,\d m_I(x),
	\qquad
	D_m=\int_I\phi_m'(x)\,\d m_I(x). 
	\]
	Then 
	\begin{equation}
		A_m^2=
		\begin{cases}
			4/\pi^2,&m=0,\\
			8/\bigl(\pi^2(m^2-1)^2\bigr),&m\geq2\text{ even},\\
			0,&m\text{ odd},
		\end{cases}
		\qquad
		D_m^2=
		\begin{cases}
			8/\pi^2,&m\text{ odd},\\
			0,&m\text{ even}.
		\end{cases} 
	\end{equation}
	
	Let $q_1$ and $q_2$ denote the coefficients of $s$ and $s^2$ in
	\eqref{eq:square-form-expansion}. Then $q_2(u_0,u_0)=3$, while
	\[
	 q_1(u_0,\phi_m\otimes\phi_n)
	 =-2(A_mD_n+D_mA_n).
	\]
	Because the lower eigenvalue of $q_1|_E$ is simple, the second-order
	formula for a split eigenvalue applies. Indeed, if
	$\alpha=-16/\pi^2$, then
	$q_1(u_0,v)=\alpha\ip{u_0}{v}_{L^2(m_\Omega)}$ for every $v\in E$;
	hence the forcing term in the first-order eigenvector equation is
	orthogonal to $E$. The formula is therefore
	\begin{equation}\label{eq:counterexample-second-order-formula}
	 \beta
	 =3-
	 \sum_{\substack{m,n\geq0\\m^2+n^2>1}}
	 \frac{4(A_mD_n+D_mA_n)^2}{m^2+n^2-1}.
	\end{equation}
	Using the parity relations above in
	\eqref{eq:counterexample-second-order-formula}, we obtain
	\begin{equation}
	\begin{aligned}
		\beta
		&=3-8A_0^2
		\sum_{\substack{n\geq3\\n\ {\rm odd}}}
		\frac{D_n^2}{n^2-1}
		-8 \sum_{\substack{m\geq2\ {\rm even}\\n\geq1\ {\rm odd}}}
		\frac{A_m^2D_n^2}{m^2+n^2-1}\\
		&=3-\frac{64}{\pi^4}-\frac{512}{\pi^4}D,
	\end{aligned}
	\end{equation}
	where 
	\begin{equation*}
	\begin{aligned}
			D:=
		\sum_{\substack{m\geq2\ {\rm even}\\ n\geq1\ {\rm odd}}}
		\frac1{(m^2-1)^2(m^2+n^2-1)}&=
		\frac{\pi}{4}\sum_{k=1}^\infty
		\frac{\tanh\!\left(\frac{\pi}{2}\sqrt{4k^2-1}\right)}
		{(4k^2-1)^{5/2}}\\
		&<\frac{\pi}{4}\sum_{k=1}^{\infty}\frac{1}{3^{5/2}k^5}
		<\frac{1}{16}.
	\end{aligned}
	\end{equation*}
	Consequently,
	\begin{equation}
			\beta>3-\frac{96}{\pi^4}>\frac{192}{\pi^4}, 
	\end{equation}
	which proves the claim. 
	
	Therefore, by \eqref{eq:counterexample-rayleigh} and
	\eqref{eq:counterexample-eigenvalue-expansion}, for all sufficiently
	small $r>0$,
	\begin{equation}
			\sqrt{C_P(\nu_{r/2})}-\left(\frac{1}{2}\sqrt{C_P(\nu_0)}+\frac{1}{2}\sqrt{C_P(\nu_r)}\right)
		=
		\left(\frac{\beta}{8}-\frac{24}{\pi^4}\right)r^2+O(r^3)>0. 
	\end{equation}
	
\medskip\noindent\textbf{Step 2: Strongly log-concave approximation.}
For $\delta>0$, set
\[
\d\eta_\delta(x)
=
Z_\delta^{-1}e^{-\delta|x|^2/2}
\mathbf1_\Omega(x)\,\d x,
\qquad
\eta_{\delta,s}=(B_s)_\#\eta_\delta.
\]
Here $Z_\delta$ is the normalizing constant. Up to an additive
constant, the potential of $\eta_{\delta,s}$ is
$V_{\delta,s}(y)=\frac{\delta}{2}|B_s^{-1}y|^2+
\iota_{B_s\bar\Omega}(y)$, where $\iota_K$ is zero on $K$ and
$+\infty$ on $K^c$. Since $\iota_{B_s\bar\Omega}$ is convex and
\begin{equation}
	B_s^{-2}\succeq\frac1{(1+r)^2}I,
	\qquad 0\leq s\leq r, 
\end{equation}
the measures $\eta_{\delta,s}$ are
$\delta(1+r)^{-2}$-strongly log-concave. Moreover,
$\frac{\d\eta_\delta}{\d m_\Omega}\in
[e^{-\delta\pi^2/4},e^{\delta\pi^2/4}]$. Hence, by
\eqref{eq:counterexample-rayleigh},
\begin{equation}
	e^{-\delta\pi^2/2}C_P(\nu_s)
	\leq C_P(\eta_{\delta,s})
	\leq e^{\delta\pi^2/2}C_P(\nu_s),
	\qquad 0\leq s\leq r, 
\end{equation}
which implies $C_P(\eta_{\delta,s})\to C_P(\nu_s)$ uniformly on $[0,r]$. Thus, for $\delta>0$ sufficiently small, 
\begin{equation}\label{eq:strongly-log-concave-counterexample}
	\sqrt{C_P(\eta_{\delta,r/2})}
	>\frac{1}{2}\sqrt{C_P(\eta_{\delta,0})}+\frac{1}{2}\sqrt{C_P(\eta_{\delta,r})}. 
\end{equation}

Fix such a $\delta>0$. Since $B_r$ is symmetric positive definite, it is
the Brenier map from $\eta_{\delta,0}$ to $\eta_{\delta,r}$. Moreover, $(1-t)I+tB_r=B_{tr}$, 
so their Wasserstein geodesic is
$(\eta_{\delta,tr})_{0\leq t\leq1}$. Taking
$\mu_0=\eta_{\delta,0}$ and $\mu_1=\eta_{\delta,r}$ gives
\begin{equation}
	\sqrt{C_P(\mu_{1/2})}>\frac{1}{2}\sqrt{C_P(\mu_0)}+\frac{1}{2}\sqrt{C_P(\mu_1)}, 
\end{equation}
which proves the proposition.
\end{proof}

\bigskip

\noindent\textbf{Declaration.}
The authors declare no competing interests. No datasets were generated or analyzed in this study. During the preparation of this manuscript, the authors used OpenAI Codex to assist with the organization and editing of the manuscript. The authors take full responsibility for the final content. 

\medskip

\noindent\textbf{Funding.}
This work was supported in part by the National Key R\&D Program of
China (2021YFA1000900, 2021YFA1002200), the National Natural Science
Foundation of China (12201596), the Shandong Provincial Natural Science
Foundation (ZR2025QB05), and the Taishan Scholars Program of Shandong
Province (tsqn202408059).
\end{document}